\newtheorem{Thm}{Theorem}
\newtheorem{Pro}{Proposition}[section]
\newtheorem{Lem}{Lemma}[section]
\theoremstyle{definition}
\newtheorem{Def}{Definition}[section]
\theoremstyle{remark}
\newtheorem*{Rem}{Remark}
\newtheorem*{Proof}{Proof}
\DeclareMathOperator{\e}{e}
\newcommand{\Cc}{\mathbb{C}}
\newcommand{\Nn}{\mathbb{N}}
\newcommand{\Zz}{\mathbb{Z}}
\newcommand{\Pp}{\mathbb{P}}
\title{{\bf Dimensional preimage entropies}}
\author{Henry De Thélin}
\date{}
\begin{document}
\maketitle

\def\figurename{{Fig.}}%
\def\proofname{Preuve}% for AMS-\LaTeX
\def\contentsname{Sommaire}%

\selectlanguage{english}
\begin{center}
{\bf{ }}
\end{center}

\begin{abstract}

Let $X$ be a compact complex manifold of dimension $k$ and $f:X \longrightarrow X$ be a dominating meromorphic map. We generalize the notion of topological entropy, by defining a quantity $h_{(m,l)}^{top}(f)$ which measures the action of $f$ on local analytic sets $W$ of dimension $l$ with $W \subset f^{-n}(\Delta)$ where $\Delta$ is a local analytic set of dimension $m$.

We give then inequalities between $h_{(m,l)}^{top}(f)$ and Lyapounov exponents of suitable invariant measures.

\end{abstract}

%\selectlanguage{francais}

Key-words: entropy, analytic sets.

Classification: 37B40, 37F10, 32Bxx.

\section*{{\bf Introduction}}
\par

Let $X$ be a compact complex manifold of dimension $k$ and $f:X \longrightarrow X$ be a dominating meromorphic map. We denote by $I$ its indeterminacy set.

A fundamental quantity to study this dynamical system is the topological entropy $h_{top}(f)$. Roughly speaking, it measures the number of different orbits that we can distinguish in the dynamical system.

The topological entropy has been generalized in several directions.

One of them is due to M. Hurley (\cite{Hu}), who defined a notion of pointwise preimage entropy which measure the number of orbits that we can distinguish in a preimage sets of individual points (see \cite{NiPr}, \cite{FFN} and \cite{ChNe} too):

$$h_{m}^{top}(f)=\lim_{\delta \to 0} \overline{\lim_n} \frac{1}{n} \log \sup_{x \in X } ( \max \# E \mbox{ , } E \mbox{   } (n,\delta) \mbox{-separated   } E \subset f^{-n}(x) ).$$

In \cite{DetVig}, with G. Vigny, we extend the definition of topological entropy  in another way: instead of considering the action of $f$ on points, we measure the action on local analytic sets of dimension $l$ (with $0 \leq l \leq k$). In some sense, we count the number of orbits of local analytic sets that we can distinguish in the dynamical system.

One of the goal of this paper is to unify and generalize these two notions.

The idea is to consider the action of $f$ on local analytic sets of dimension $l$ which are themself contained in preimages of local analytic sets with dimension $m$. It defines the dimensional preimage entropies $h_{(m,l)}^{top}(f)$.

More precisely, for $0 \leq l \leq k$, $\delta >0$ and $n \in \Nn$, we consider

\begin{equation*}
\begin{split}
&X_l^{\delta , n}= \{ W \subset X \mbox{ , } W \mbox{   is a graph of a holomorphic map   } \Phi \\
& \mbox{   over a ball  } B_l(x, e^{-\delta n}) \mbox{ , } Lip( \Phi) \leq 1 \mbox{   and   } \forall k \leq n-1 \mbox{   } f^k(W) \subset X \setminus I \}. \\
\end{split}
\end{equation*}

Here $B_l(x, e^{-\delta n})$ denotes the ball of dimension $l$ (i.e. in a chart, $B_l(x, e^{-\delta n})$ is contained in a complex plane of dimension $l$) with center $x$ and radius $e^{-\delta n}$. In all this paper, when we work in a chart, we consider the metric associated to the norm $\|x\|=\max(|x_1|, \cdots , |x_k|)$.  

We also define 
\begin{equation*}
\begin{split}
&X_l^{\delta}= \{ W \subset X \mbox{ , } W \mbox{   is a graph of a holomorphic map   } \Phi \\
& \mbox{   over a ball  } B_l(x, \delta) \mbox{ , } Lip( \Phi) \leq 1 \mbox{  and  } \Phi(x)=0 \}.
\\
\end{split}
\end{equation*}

In both definitions, by convention, $B_0(x, r)=\{x\}$, $X_0^{\delta , n}= \{ x \in X \mbox{   } \forall k \leq n-1 \mbox{   } f^k(x) \in X \setminus I \}$ and $X_0^{\delta}=X$. By convention too, a graph over $B_k(x,r)$ is $B_k(x,r)$ itself.

For $A,B \subset X$, we define $d(A,B)=\inf \{ d(x,y) \mbox{ , } x \in A \mbox{ , } y \in B \}$. As in \cite{DetVig}, we extend the definition of $(n, \delta)$-separated points to the analytic sets in $X_l^{\delta , n}$:

\begin{Def}
A set $E \subset X_l^{\delta , n}$ is $(n, \delta)$-separated if for all $W \neq W'$ in $E$ we have $\displaystyle \max_{i=0, \cdots , n-1} d(f^{i}(W),f^{i}(W')) \geq \delta$.
\end{Def}

We can give now the definition of the dimensional preimage entropies: the $(m,l)$-topological entropy counts the maximal number of elements $W$ in $X_l^{\delta , n}$ which are $(n, \delta)$-separated and contained in a set $f^{-n}(\Delta)$ (with $\Delta \in X_m^{\delta}$). More precisely:

\begin{Def}
For $0 \leq m \leq k$ and $0 \leq l \leq m$, we define the $(m,l)$-topological entropy by
\begin{equation*}
\begin{split}
h_{(m,l)}^{top}(f)=& \overline{\lim_{\delta \to 0}} \overline{\lim_n} \frac{1}{n} \log \sup_{\Delta \in X_m^{\delta} } ( \max \# E \mbox{ , }\\
&E \mbox{   } (n,\delta) \mbox{-separated   } E \subset X_l^{\delta , n} \mbox{   and   } \forall W \in E \mbox{   we have   } W \subset f^{-n}(\Delta) ).  \\
\end{split}
\end{equation*}
\end{Def}
 
\begin{Rem}
For $m=0$ it is exactly the pointwise preimage entropy $h_m(f)$ defined in \cite{Hu} (see \cite{NiPr} and \cite{FFN} too) naturally extended to the meromorphic maps context.

If we consider $h_{top}^l(f)$ defined in \cite{DetVig}, we have clearly $h_{(k,l)}^{top}(f) \leq h_{top}^l(f)$, but the equality is not clear.
 
\end{Rem}

We give now some properties of these dimensional preimage entropies. We postpone their proofs to the paragraph \ref{properties}.

The first one gives the link between the new entropies and the usual topological entropy:

\begin{Pro}{\label{topological}}
We have $h_{(k,0)}^{top}(f)=h_{top}(f)$.
\end{Pro}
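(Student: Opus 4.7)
The plan is to prove the equality by establishing both inequalities, the nontrivial one being $h_{(k,0)}^{top}(f)\ge h_{top}(f)$.

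For the easy direction $h_{(k,0)}^{top}(f)\le h_{top}(f)$: unpacking the definitions at $(m,l)=(k,0)$, an element $W$ of $X_0^{\delta,n}$ is just a point $x\in X$ whose first $n$ iterates avoid $I$, and $\Delta\in X_k^\delta$ is, by the conventions stated after the two definitions, exactly a ball $B_k(x_0,\delta)$. So any admissible $(n,\delta)$-separated family $E\subset X_0^{\delta,n}\cap f^{-n}(\Delta)$ is in particular an $(n,\delta)$-separated subset of $X$ whose forward orbits avoid $I$, so $\#E$ is bounded by the quantity counted by $h_{top}(f)$ (in its meromorphic-map version, where one restricts to points with well-defined orbits of length $n$). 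Taking $\tfrac{1}{n}\log$ and the appropriate limits gives the inequality.

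For the reverse direction, fix $\delta>0$ and $n\in\Nn$, and let $E\subset X_0^{\delta,n}$ be a maximal $(n,\delta)$-separated set realizing (up to taking $\sup$) the quantity that defines $h_{top}(f)$. The idea is a pigeonhole over a covering of $X$ by $\delta$-balls. Since $X$ is compact there is a finite cover $X=\bigcup_{j=1}^{N(\delta)} B_k(x_j,\delta)$ by $N(\delta)$ balls, where $N(\delta)$ depends only on $\delta$, not on $n$. Consider the map $e\mapsto f^n(e)$ on $E$; by pigeonhole, there exists an index $j_0$ such that
\[
\#\{e\in E:f^n(e)\in B_k(x_{j_0},\delta)\}\ \ge\ \frac{\#E}{N(\delta)}.
\]
Setting $\Delta=B_k(x_{j_0},\delta)\in X_k^\delta$ and $E'=\{e\in E:f^n(e)\in\Delta\}$, the subset $E'$ is still $(n,\delta)$-separated (separation is inherited by subsets), still lies in $X_0^{\delta,n}$, and satisfies $E'\subset f^{-n}(\Delta)$. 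Hence $E'$ is admissible in the definition of $h_{(k,0)}^{top}(f)$.

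Taking logarithms and dividing by $n$,
\[
\frac{1}{n}\log\#E'\ \ge\ \frac{1}{n}\log\#E\ -\ \frac{1}{n}\log N(\delta),
\]
and since $N(\delta)$ is independent of $n$, the correction disappears as $n\to\infty$. Passing to $\overline{\lim}_n$ and then to $\overline{\lim}_{\delta\to0}$ yields $h_{(k,0)}^{top}(f)\ge h_{top}(f)$. The only mild point to verify is that the conventions $X_0^{\delta,n}=\{x:\forall k\le n-1,\ f^k(x)\notin I\}$ and ``$X_k^\delta$ = balls $B_k(x,\delta)$'' match the natural extension of $h_{top}$ to the meromorphic setting used here; once that is granted, the argument is essentially a one-step covering estimate, and there is no serious obstacle.
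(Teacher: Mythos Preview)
Your proof is correct and follows essentially the same approach as the paper: the easy inequality is immediate from the definitions, and the reverse inequality is obtained by a pigeonhole argument over a finite $\delta$-scale covering of $X$ applied to the images $f^n(e)$ of a large $(n,\delta)$-separated set, with the count $N(\delta)$ (the paper uses charts subdivided into cubes of size $\delta$, giving a multiplicative constant $\delta^{2k}/C$) disappearing in the $\overline{\lim}_n$. The arguments are the same up to cosmetic differences in how the covering is produced.
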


The both next Propositions give the behavior of these entropies when we vary the dimensions:

\begin{Pro}

For $0 \leq m \leq k$ and $1 \leq l \leq m$, we have

$$h_{(m,l-1)}^{top}(f) \geq h_{(m,l)}^{top}(f).$$

\end{Pro}

\begin{Pro}
For $0 \leq m \leq k-1$ and $0 \leq l \leq m$, we have

$$h_{(m+1,l)}^{top}(f) \geq h_{(m,l)}^{top}(f).$$

\end{Pro}

Now, in the same spirit as the Ruelle's inequality or \cite{Det1}, we give some inequalities between the dimensional preimage entropies and the Lyapounov exponents of suitable measures. 

Let $\mathcal{C}$ be the critical set of $f$, $I$ its indeterminacy set and $\mathcal{A}= I \cup \mathcal{C}$. Notice that when an ergodic invariant probability measure $\mu$ satisfies $\int \log d(x, \mathcal{A}) d \mu(x) > - \infty$, it means that its Lyapounov exponents are finite. In this paper, under this natural assumption, we prove the three following inequalities:

\begin{Thm}{\label{th1}}

Let $\mu$ be an ergodic invariant probability measure such that $\int \log d(x, \mathcal{A}) d \mu(x) > - \infty$ and suppose that its Lyapounov exponents satisfy

$$\chi_1 \geq \cdots \geq \chi_s > 0 \geq  \chi_{s+1} \geq \cdots \geq \chi_k.$$

Then $h_{(k-s,k-s)}^{top}(f) \geq h_{\mu}(f)$.

\end{Thm}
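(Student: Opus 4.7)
The plan is to combine Pesin theory with Katok's entropy formula: for each $\epsilon>0$ and $n$ large I will construct a single disk $\Delta\in X_{k-s}^{\delta}$ such that $f^{-n}(\Delta)$ contains at least $e^{n(h_\mu(f)-\epsilon)}$ pairwise $(n,\delta)$-separated elements of $X_{k-s}^{\delta,n}$. Under the hypothesis $\int\log d(x,\mathcal{A})\,d\mu(x)>-\infty$, Oseledets' theorem and Pesin's construction apply: the Lyapounov splitting $T_xX=E^u_x\oplus E^{cs}_x$, with $\dim E^u_x=s$ and $\dim E^{cs}_x=k-s$, is defined $\mu$-a.e.; on a Pesin block $\Lambda$ of measure $>1-\epsilon$ one has Pesin charts of uniform size $r_0$ in which the splitting is continuous, $Df$ expands $E^u$ by at least $e^{\chi_s-\epsilon}$ and dilates $E^{cs}$ by at most $e^\epsilon$, and orbits of $\Lambda$ stay at distance $\geq e^{-n\epsilon}$ from $\mathcal{A}$.

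Next I would fix a density point $y_0\in\Lambda$ of $\mu$ and take for $\Delta$ the piece of the affine plane $E^{cs}_{y_0}$ of radius $\delta$, which belongs to $X_{k-s}^{\delta}$. Continuity of the splitting makes $\Delta$ transverse to $E^u_y$ for every $y$ in $B=\Lambda\cap B(y_0,\rho)$, provided $\rho$ is small enough. The $f$-invariance of $\mu$ yields $\mu(\Lambda\cap f^{-n}(B))\geq\mu(B)-\epsilon>0$ for every $n$, so Katok's theorem provides $N\geq e^{n(h_\mu(f)-\epsilon)}$ points $x_1,\dots,x_N\in\Lambda\cap f^{-n}(B)$ that are $(n,c\delta)$-separated, for a large constant $c$ to be fixed later.

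For each $x_i$, since its orbit avoids $\mathcal{A}$ there is a well-defined inverse branch $g_i$ of $f^n$ on a neighborhood of $f^n(x_i)$ returning to $x_i$; in the Pesin chart at $x_i$, $g_i$ contracts $E^u$ by $e^{-n(\chi_s-\epsilon)}$ and dilates $E^{cs}$ by at most $e^{n\epsilon}$. Applying $g_i$ to an $E^{cs}$-disk of radius $e^{-n(\delta+\epsilon)}$ inside $\Delta$ centered at the closest point of $\Delta$ to $f^n(x_i)$ produces a $(k-s)$-dimensional graph $W_i\subset f^{-n}(\Delta)$ over an $E^{cs}_{x_i}$-ball of radius $e^{-n\delta}$, with Lipschitz constant $\leq e^{-n(\chi_s-2\epsilon)}<1$. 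Since $W_i$ stays $Ce^{-n(\delta-\epsilon)}$-close to the orbit of $x_i$, one gets $f^k(W_i)\subset X\setminus I$ for every $k<n$, so $W_i\in X_{k-s}^{\delta,n}$. Standard Pesin estimates give $\mathrm{diam}\,f^k(W_i)\leq e^{-n(\delta-\epsilon)}\ll\delta$, and together with the $(n,c\delta)$-separation of the $x_i$'s this yields the $(n,\delta)$-separation of the $W_i$'s once $c$ is large enough. Letting $\epsilon\to 0$ concludes.

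The hard part will be the third paragraph: producing the inverse branches $g_i$ and the graphs $W_i$ with controlled size and Lipschitz constant in the ambient metric of $X$ rather than in the Pesin metric, and doing so uniformly in $i$. The distortion between the Pesin and ambient metrics is of order $e^{\pm n\epsilon}$, which forces $\epsilon\ll\delta$; uniformity in $i$ crucially relies on the assumption $\int\log d(x,\mathcal{A})\,d\mu(x)>-\infty$, which controls simultaneously the Pesin chart sizes and the distance of the orbits to $\mathcal{A}$. The possibility of zero Lyapounov exponents in $E^{cs}$ is accommodated by the sub-exponential $e^\epsilon$-bound on the $E^{cs}$-expansion, which replaces true contraction.
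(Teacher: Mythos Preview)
Your outline follows the paper's strategy (Brin--Katok separated points, Pesin charts, pulling back a $(k-s)$-plane along inverse branches), but the third paragraph hides a real gap. You apply the full inverse branch $g_i$ in one shot and claim $\mathrm{diam}\,f^k(W_i)\leq e^{-n(\delta-\epsilon)}$ for all $k<n$. This is false as soon as some exponent $\chi_j$ is strictly negative: in that $E^{cs}$-direction the one-step inverse expands by $e^{-\chi_j}$, so after $m$ inverse steps your radius-$e^{-n(\delta+\epsilon)}$ disk has diameter $\geq e^{-n(\delta+\epsilon)+m(-\chi_k-\epsilon)}$, which leaves the domain on which the inverse is defined long before $m=n$. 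The paper deals with this by pulling back \emph{step by step with cut-offs}: a graph-transform lemma shows that if $(\Phi_{l+1}(Y),Y)$ is a graph over $B_{k-s}(0,e^{-4\delta n-2\delta(n-l-1)})$ with $\mathrm{Lip}\,\Phi_{l+1}\leq 2\gamma_0$ and $\|\Phi_{l+1}(0)\|\leq 2e^{-4\delta n}$, then its image by $g^{-1}_{\widehat{f}^l(\widehat{x_i})}$ is again such a graph, over $B_{k-s}(0,e^{-4\delta n-2\delta(n-l)})$; one restricts to this ball before the next step. After $n$ iterations one lands in $X_{k-s}^{7\delta,n}$, not $X_{k-s}^{\delta,n}$; the $e^{-2\delta}$ shrinkage per step is precisely the cost of possibly vanishing exponents ($\|B^{-1}\|^{-1}\geq e^{-\delta}$ only), the opposite of the $e^{+\epsilon}$ gain you assert.

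There is also a difference in how $\Delta$ is produced. The paper does not fix it as the affine plane $E^{cs}_{y_0}$ at a density point. It first sorts the $f^n(x_i)$ into a cube of size $e^{-8\delta n}$ (losing only a factor $e^{-16k\delta n}$ in the count), pushes forward $s$-dimensional unstable graphs from each $x_i$ to $f^n(x_i)$, and then chooses $\Delta$ by pigeonhole as a $(k-s)$-plane hitting many of these pushed graphs. The intersection point provides the initial bound $\|\Phi_n(0)\|\leq 2e^{-4\delta n}$ needed to launch the step-by-step lemma. Your a-priori choice gives only $\|\Phi_n(0)\|=O(\rho)$ with $\rho$ fixed, whereas the tempered Pesin radii $r_1,r_2$ along the orbit can drop to $\alpha_0 e^{-\delta n}$; since the nonlinear error in the graph transform is $\|(X,Y)\|/r_2$, one genuinely needs the whole construction to live in balls of size $e^{-O(\delta)n}$.
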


\begin{Thm}{\label{th2}}

Let $\mu$ be an ergodic invariant probability measure such that $\int \log d(x, \mathcal{A}) d \mu(x) > - \infty$ and suppose that its Lyapounov exponents satisfy

$$\chi_1 \geq \cdots \geq \chi_s > 0 \geq  \chi_{s+1} \geq \cdots \geq \chi_{s+l_0}= \cdots = \chi_{s+l_1} >  \chi_{s+l_1+1} \geq \cdots \geq \chi_k.$$

Then 

$$h_{(k-s-l_1,k-s-l_1)}^{top}(f) \geq h_{\mu}(f)+ 2 \chi_{s+1} + \cdots + 2 \chi_{s+l_1}.$$

\end{Thm}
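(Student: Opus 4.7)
The plan is to extend the Pesin-theoretic strategy behind Theorem~\ref{th1} to the smaller target dimension $m := k - s - l_1$. The key new structural input is the strict spectral gap $\chi_{s+l_1} > \chi_{s+l_1+1}$, which provides Pesin strong stable manifolds of complex dimension $m$ tangent to the slowest Oseledets subspace $E^{ss}$; the two other Oseledets subspaces are $E^u$ (complex dim $s$, positive exponents) and $E^{cs}$ (complex dim $l_1$, exponents $\chi_{s+1},\ldots,\chi_{s+l_1}$). Under the integrability assumption $\int \log d(x,\Acal)\,d\mu(x) > -\infty$, Pesin theory furnishes, for each $\eta>0$, a Pesin block $\Lambda_\eta$ of $\mu$-measure $\geq 1-\eta$ on which the local strong stable manifolds $W^{ss}_{\mathrm{loc}}(x)$ have uniform size and uniform Lipschitz estimates. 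Consequently, for $\delta > \max_j |\chi_j|$, their restriction to $B_m(x, e^{-\delta n})$ lies in $X_m^{\delta,n}$, the forward iterates staying close to the Pesin orbit and hence avoiding~$I$.

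By Katok's variational characterization of $h_\mu$ one produces $N_n \geq e^{n(h_\mu - \eta)}$ pairwise $(n,\delta_0)$-separated points $x_1,\ldots,x_{N_n}$ whose length-$n$ orbits stay in $\Lambda_\eta$. I would take $W_i$ to be the restricted strong stable leaf at $x_i$; since $W_i$ has diameter $\leq e^{-\delta n}$ while the centers $x_i$ are $(n,\delta_0)$-separated, the $W_i$ inherit pairwise $(n,\delta_0/2)$-separation. For the target $\Delta \in X_m^\delta$, the natural choice is the restriction of a Pesin strong stable leaf $W^{ss}_{\mathrm{loc}}(y_0)$ to a ball of radius $\delta$, for a Pesin point $y_0 \in \Lambda_\eta$ selected by pigeonhole below. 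The crucial feature of this choice is that $T_z\Delta = E^{ss}(z)$ for every $z\in\Delta$; by uniqueness of Pesin strong stable manifolds through a single point of $\Lambda_\eta$, together with the invariance $f(W^{ss}_{\mathrm{loc}}(x))\subset W^{ss}_{\mathrm{loc}}(f(x))$, the condition $f^n(W_i)\subset\Delta$ reduces to the point condition $f^n(x_i)\in\Delta$.

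The heart of the argument is then the pigeonhole: I must locate some $\Delta$ hit by at least $e^{n(h_\mu + 2(\chi_{s+1}+\cdots+\chi_{s+l_1}) - O(\eta))}$ of the $f^n(x_i)$'s. I would project onto the local quotient by the strong stable foliation, a complex $(s+l_1)$-dimensional space carrying a factor dynamics $\bar f$ with Lyapunov exponents $\chi_1,\ldots,\chi_{s+l_1}$. The real-volume Jacobian of $\bar f^n$ restricted to the $E^{cs}$-direction equals $e^{2n(\chi_{s+1}+\cdots+\chi_{s+l_1})}$, and this is exactly the Jacobian that produces the announced correction: after $n$ iterates, the $\bar f$-images of the orbits are confined in the $E^{cs}$-direction to a region of volume at most this factor times the initial one, so pigeonholing over $\delta$-cells of the quotient transverse to the strong stable leaves forces some cell (i.e.\ some leaf $\Delta$) to receive at least the claimed number of $f^n(x_i)$. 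I expect the main obstacle to be keeping this transverse volume estimate sharp: one must compare the ambient $(n,\delta_0)$-separation with the intrinsic $E^{cs}$-geometry in Pesin coordinates, control tempered Pesin distortions uniformly on $\Lambda_\eta$, and verify that the orbits truly land in the right cell with no loss beyond $e^{n\eta}$. The integrability hypothesis is used precisely to guarantee such uniform Pesin estimates for the meromorphic map $f$, as in~\cite{Det1}. One then concludes by letting $n\to\infty$, then $\delta\to 0$, and finally $\eta\to 0$.
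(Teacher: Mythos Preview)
Your approach is genuinely different from the paper's, and it contains a real gap at the pigeonhole step.

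You take $\Delta$ to be a single strong stable leaf and reduce the inclusion $f^n(W_i)\subset\Delta$ to the point condition $f^n(x_i)\in\Delta$. But $\Delta$ has complex dimension $m=k-s-l_1$, so this is a (real) codimension $2(s+l_1)$ condition; for a generic choice of leaf, \emph{none} of the finitely many $f^n(x_i)$ lie on it. Your pigeonhole does not resolve this: a ``$\delta$-cell of the quotient'' is not a single leaf but an $(s+l_1)$-dimensional family of leaves. Knowing that many $f^n(x_i)$ fall in one cell tells you nothing about how many lie on any \emph{individual} leaf inside that cell. The $E^{cs}$-Jacobian $e^{2n(\chi_{s+1}+\cdots+\chi_{s+l_1})}$ controls the volume of a region in the quotient, but the passage from ``many points in a small region'' to ``many points on one codimension-$(s+l_1)$ leaf'' is exactly the missing step. (Incidentally, the requirement $\delta>\max_j|\chi_j|$ conflicts with the outer limit $\delta\to 0$ in the definition of $h^{top}_{(m,m)}$.)

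The paper proceeds quite differently and never uses strong stable leaves for either $\Delta$ or the $W_i$. Instead it pushes \emph{forward} from each $x_i$ a small $(s+l_1)$-dimensional graph tangent to $E^u_1\oplus E^{cs}$, obtaining near $f^n(x_i)$ a graph $W_n(f^n(x_i))$ over the $E^u$-direction whose $2(s+l_1)$-volume is bounded below by $e^{2n(\chi_{s+1}+\cdots+\chi_{s+l_1})-O(\delta n)}$; this is proved by a foliation/coarea argument. Summing these volumes and projecting to the common $(s+l_1)$-dimensional $E^u$-plane (in Pesin coordinates at a fixed reference point), one finds a point $p$ covered by at least $e^{n(h_\mu+2\sum\chi_{s+j})-O(\delta n)}$ of the projections. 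The target $\Delta$ is then the affine $(k-s-l_1)$-plane over $p$ (not a Pesin leaf), and the graphs $V_i\subset f^{-n}(\Delta)$ are obtained by \emph{pulling back} $\Delta$ through the Pesin charts via the graph transform theorem, using the spectral gap $\chi_{s+l_1}>\chi_{s+l_1+1}$ to preserve the Lipschitz bound. Thus the $e^{2n\sum\chi}$ factor enters through a volume lower bound on pushed-forward $(s+l_1)$-manifolds that are \emph{transversal} to $\Delta$, not through confinement of point images along $E^{cs}$.
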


And finally,

\begin{Thm}{\label{th3}}

Let $\mu$ be an ergodic invariant probability measure such that $\int \log d(x, \mathcal{A}) d \mu(x) > - \infty$. Denote by $\chi_1 , \cdots ,\chi_k$ its Lyapounov exponents. Then

$$h_{(0,0)}^{top}(f) \geq h_{\mu}(f)+ 2 \chi_{s+1} + \cdots + 2 \chi_{k}.$$

\end{Thm}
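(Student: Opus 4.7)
The approach is a Katok-type construction in the spirit of the Ruelle-type lower bounds of \cite{Det1}, adapted to preimage entropy: Theorem \ref{th3} is the limiting case $l_1=k-s$ of Theorem \ref{th2}, so the same machinery should apply, but the specialization to $l=m=0$ (single points rather than positive-dimensional analytic sets) deserves separate care since the $(n,\delta)$-separation is only measured in $X$ and no analytic set is available to absorb the backward-expanding directions.

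I would begin by lifting $\mu$ to its natural extension $\hat\mu$ on $\hat X=\{(x_i)_{i\in\Zz}\colon f(x_i)=x_{i+1}\}$, equipped with the invertible shift $\hat f$. The entropy is preserved, $h_{\hat\mu}(\hat f)=h_\mu(f)$, and the Lyapunov exponents of $(\hat f^{-1},\hat\mu)$ are $-\chi_k,\ldots,-\chi_1$. A preimage $x\in f^{-n}(y)$ corresponds to a backward trajectory $(y_0=y, y_{-1},\ldots, y_{-n}=x)$, and the $(n,\delta)$-separation from the definition of $h_{(0,0)}^{top}$ reads $\max_{1\leq i\leq n} d(y_{-i}^{(1)},y_{-i}^{(2)})\geq\delta$. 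The hypothesis $\int\log d(\cdot,\Acal)\,d\mu>-\infty$ then permits Pesin theory: I fix $\epsilon>0$ small and choose a Pesin block $\hat\Lambda$ of measure at least $1-\epsilon$ on which Lyapunov charts of uniform size exist for $\hat f^{-1}$, splitting the tangent bundle into a backward-expanding factor of complex dimension $k-s$ (the directions of exponent $\chi_i\leq 0$) and a backward-contracting factor of complex dimension $s$.

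The construction of preimages proceeds in two stages. First, a Brin--Katok estimate for $\hat f^{-1}$ applied on $\hat\Lambda$, together with a disjoint-packing argument, produces at least $e^{n(h_\mu-\epsilon)}$ backward trajectories of length $n$ starting near a $\hat\mu$-typical $\hat y$ that are pairwise $(n,\delta)$-separated for $\hat f^{-1}$ in $\hat X$; projecting the time-$n$ endpoints to $X$ yields candidate preimages of $y=\pi_0(\hat y)$. Second, not all of these preimages remain $(n,\delta)$-separated in $X$ along their forward orbits: two preimages differing predominantly in the $k-s$ non-positive-exponent directions have their forward orbits contract together by a factor $\prod_{i>s} e^{n\chi_i}$ and therefore fail $(n,\delta)$-separation once projected to $X$. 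A careful count inside each Pesin chart then shows that the surviving $(n,\delta)$-separated preimages are at least $e^{n(h_\mu+2\chi_{s+1}+\cdots+2\chi_k-O(\epsilon))}$ in number (the factor $2$ coming from the complex structure), which is the desired bound after letting $\epsilon\to 0$ and $\delta\to 0$.

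The main obstacle is precisely this last quantitative step: controlling how the $(n,\delta)$-separation in the natural extension degrades under projection to $X$ in the presence of the $k-s$ backward-expanding directions of non-positive Lyapunov exponent, and maintaining uniform Pesin-chart constants despite their subexponential decay along the backward orbits. The sharpness of the correction $2\chi_{s+1}+\cdots+2\chi_k$ reflects the fact that, unlike in Theorems \ref{th1} and \ref{th2}, no positive-dimensional analytic set $W$ is available to follow the non-positive Lyapunov directions, so every such direction costs $2\chi_i$ in the final lower bound.
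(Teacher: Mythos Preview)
Your outline has the right ingredients but a real gap in the first stage. Applying Brin--Katok to $\hat f^{-1}$ on a Pesin block produces $\asymp e^{n h_\mu}$ points $\hat z^{(j)}\in\hat X$ whose \emph{backward} orbits $(n,\delta)$-separate; however their zeroth coordinates $\pi_0(\hat z^{(j)})$ are merely close to $y$, not equal to it, so the time-$n$ projections $\pi_0(\hat f^{-n}(\hat z^{(j)}))$ lie in $f^{-n}(\pi_0(\hat z^{(j)}))$, not in $f^{-n}(y)$. You therefore have candidate preimages of many different base points, not of a single $y$, and the definition of $h_{(0,0)}^{top}$ requires a common $y$. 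Closing this gap would need either a shadowing lemma (delicate here, since exponents $\chi_{s+1},\dots,\chi_k$ may vanish) or an argument collapsing all the $\pi_0(\hat z^{(j)})$ onto one point; your second stage (``careful count inside each Pesin chart'') is too vague to supply either, and your heuristic that the correction comes from ``loss of separation under projection'' does not match how the factor actually arises.

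The paper runs the argument in the opposite direction and avoids this issue. It first uses ordinary Brin--Katok for $f$ to produce $\asymp e^{n(h_\mu-\epsilon)}$ points $x_1,\dots,x_N$ that are $(n,2\delta)$-separated under \emph{forward} iteration. Around each $x_i$ it places a box $B_s(0,e^{-4\delta n})\times B_{k-s}(0,e^{-8\delta n})$ in Pesin coordinates and pushes it forward by $g_{\widehat{x_i}},\dots,g_{\widehat{f}^{n-1}(\widehat{x_i})}$ with cut-offs at scale $e^{-4\delta n}$. A foliation/coarea estimate shows each pushed-forward set $W_n(f^n(x_i))$ has $2k$-volume at least $e^{2(\chi_{s+1}+\cdots+\chi_k)n-C\delta n}$; since there are $\asymp e^{nh_\mu}$ such sets and the volume of $X$ is bounded, pigeonhole yields a single point $y$ lying in at least $e^{n(h_\mu+2\chi_{s+1}+\cdots+2\chi_k)-C\delta n}$ of them. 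For each such $i$ one gets a genuine $z_i\in f^{-n}(y)$ inside the original box around $x_i$, and the cut-offs force $d(f^l(z_i),f^l(x_i))\leq \delta/2$ for all $l\leq n-1$, so the $z_i$ inherit $(n,\delta)$-separation from the $x_i$. The correction $e^{2\sum_{j>s}\chi_j n}$ is thus a volume/covering phenomenon, not a projection loss.
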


We can see the last Theorem as a Ruelle inverse formula. Indeed, when $f$ is invertible, this is exactly the Ruelle's inequality for $f^{-1}$ (because in this case $h_{(0,0)}^{top}(f)=0$).

Remark that a large part of this paper can be extended to real dynamical systems. We deal here with dominating meromorphic maps by interest to the indeterminacy set on which $f$ is not an application.

Here is the plan of this paper: in the first paragraph we prove the properties given for the dimensional preimage entropies. In the second one we recall some facts on Pesin's Theory and on the graph transform Theorem which are useful in the proofs of Theorems 1,2 and 3, and finally we prove these three Theorems.

\section{\bf Proofs of the properties}{\label{properties}}

We recall the properties and then we give their proofs.

\begin{Pro}
We have $h_{(k,0)}^{top}(f)=h_{top}(f)$.
\end{Pro}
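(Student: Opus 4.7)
My plan is to show the two inequalities separately. The key observation is that by the conventions stated in the paper, an element $\Delta \in X_k^{\delta}$ is simply a ball $B_k(x, \delta)$ of full dimension in $X$, and $X_0^{\delta, n}$ is just the set of points whose first $n-1$ iterates avoid $I$. So $h_{(k,0)}^{top}(f)$ counts $(n,\delta)$-separated points whose $n$-th iterate lands in a single full-dimensional $\delta$-ball.

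For the upper bound $h_{(k,0)}^{top}(f) \leq h_{top}(f)$, I would argue directly from the definitions: any admissible set $E$ counted by $h_{(k,0)}^{top}(f)$ is, in particular, an $(n,\delta)$-separated set of points in $X$ whose orbit avoids $I$ up to time $n-1$. Hence its cardinality is bounded by the maximal cardinality of an $(n,\delta)$-separated set in $X$, which is what defines $h_{top}(f)$ (in its natural extension to meromorphic maps). Passing to the limit gives the desired inequality.

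For the lower bound $h_{(k,0)}^{top}(f) \geq h_{top}(f)$, I would use a pigeonhole argument. Fix $\delta > 0$ and cover $X$ by a finite number $N = N(\delta)$ of balls $B_k(x_i, \delta)$, $i = 1, \ldots, N$, each of which belongs to $X_k^{\delta}$ by the convention. Let $E_n$ be a maximal $(n,\delta)$-separated subset of $X_0^{\delta, n}$, so that $\overline{\lim_n} \frac{1}{n}\log \# E_n$ realizes $h_{top}(f)$ at scale $\delta$. Since the sets $\{y \in E_n : f^n(y) \in B_k(x_i, \delta)\}$ cover $E_n$, by pigeonhole there is some $i_0 = i_0(n)$ with
\[
\#\{y \in E_n : f^n(y) \in B_k(x_{i_0}, \delta)\} \geq \frac{\# E_n}{N(\delta)}.
\]
Call this subset $E_n'$; it is still $(n,\delta)$-separated, lies in $X_0^{\delta,n}$, and is contained in $f^{-n}(\Delta)$ for $\Delta = B_k(x_{i_0}, \delta) \in X_k^{\delta}$. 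Hence it is admissible in the definition of $h_{(k,0)}^{top}(f)$. Taking $\frac{1}{n}\log$ and letting $n \to \infty$, the factor $\frac{1}{n}\log N(\delta)$ vanishes since $N(\delta)$ is independent of $n$. Letting then $\delta \to 0$ yields $h_{(k,0)}^{top}(f) \geq h_{top}(f)$.

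The only subtlety, and the step most worth writing out carefully, is verifying that the definitions really match on the boundary cases introduced by the indeterminacy set and the conventions $B_0(x,r) = \{x\}$, $X_0^{\delta,n} = \{x : f^k(x) \notin I,\ k \leq n-1\}$, and "graph over $B_k(x,r) = B_k(x,r)$". Once these identifications are written down, both inequalities are immediate from a finite cover plus pigeonhole, and no genuine obstacle remains.
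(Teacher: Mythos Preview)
Your proof is correct and follows essentially the same route as the paper: the inequality $h_{(k,0)}^{top}(f)\leq h_{top}(f)$ is immediate from the definitions, and the reverse inequality is obtained by a pigeonhole argument on the images $f^n(E)$ of a large $(n,\delta)$-separated set, using a fixed finite cover of $X$ by full-dimensional $\delta$-balls (the paper phrases this as a subdivision of each chart into cubes of size $\delta$, which with the max norm is the same thing). The only cosmetic difference is that the paper runs the argument with an auxiliary $\epsilon>0$ and explicit constants $C$ and $\delta^{2k}$, whereas you absorb everything into a single constant $N(\delta)$ independent of $n$; both presentations lead to the same limit.
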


\begin{proof}

Recall that $\displaystyle{h_{top}(f)= \lim_{\delta \to 0} \overline{\lim_n} \frac{1}{n} \log \sup (\max \# E \mbox{ , } E \mbox{   } (n,\delta) \mbox{-separated   })}$.

First we have clearly $h_{(k,0)}^{top}(f)\leq h_{top}(f)$.

Take $\epsilon >0$. We choose $\delta >0$ small enough so that

$$h_{top}(f)- \epsilon \leq \overline{\lim_n} \frac{1}{n} \log \sup (\max \# E \mbox{ , } E \mbox{   } (n,\delta) \mbox{-separated   }).$$

For $n_0 \in \Nn$, there exists $n \geq n_0$ with

$$h_{top}(f)- 2\epsilon \leq  \frac{1}{n} \log \sup (\max \# E \mbox{ , } E \mbox{   } (n,\delta) \mbox{-separated   }).$$

So, consider a set $E$ $(n,\delta)$-separated with $\# E \geq e^{h_{top}(f)n- 2\epsilon n}$.

$X$ is covered by a finite number $C$ of charts. We consider a chart with a number higher or equal to $\frac{1}{C}  e^{h_{top}(f)n- 2\epsilon n}$ of points of $f^n(E)$ (counted with multiplicity) and we take a subdivision of this chart into identical cubes of size $\delta$. Up to a multiplicative constant which depends only on $X$, one of these cubes, $\Delta$, contains at least $\frac{\delta^{2k}}{C}  e^{h_{top}(f)n- 2\epsilon n}$ points of $f^n(E)$.

This $\Delta$ is in $X_k^{\delta}$.

We proved that for all $n_0\in \Nn$, there exits $n \geq n_0$ such that

$$\frac{1}{n} \log  \sup_{\Delta \in X_k^{\delta} } (\max \# E \mbox{ , } E \mbox{   } (n,\delta) \mbox{-separated   } E \subset f^{-n}(\Delta) ) \geq h_{top}(f)- 2\epsilon + \frac{1}{n} \log \frac{\delta^{2k}}{C} $$

because if $f^n(x) \in \Delta$ then $x \in f^{-n}(\Delta)$.

Now, take the $\displaystyle \overline{\lim_n}$, the $\displaystyle \overline{\lim_{\delta \to 0}}$ and then the result follows by letting $\epsilon \to 0$.

\end{proof}

\begin{Pro}

For $0 \leq m \leq k$ and $1 \leq l \leq m$, we have

$$h_{(m,l-1)}^{top}(f) \geq h_{(m,l)}^{top}(f).$$

\end{Pro}

\begin{proof}

Take $\epsilon >0$. We choose $\delta >0$ small enough so that

\begin{equation*}
\begin{split}
& \overline{\lim_n} \frac{1}{n} \log \sup_{\Delta \in X_m^{\delta} } ( \max \# E \mbox{ , } E \mbox{   } (n,\delta) \mbox{-separated   } E \subset X_l^{\delta , n} \mbox{   and   } \forall W \in E \mbox{   ,   } W \subset f^{-n}(\Delta) )  \\
& \geq h_{(m,l)}^{top}(f) - \epsilon. \\
\end{split}
\end{equation*}

For $n_0 \in \Nn$, there exists $n \geq n_0$ with

\begin{equation*}
\begin{split}
&  \frac{1}{n} \log \sup_{\Delta \in X_m^{\delta} } ( \max \# E \mbox{ , } E \mbox{   } (n,\delta) \mbox{-separated   } E \subset X_l^{\delta , n} \mbox{   and   } \forall W \in E \mbox{   ,   } W \subset f^{-n}(\Delta) )  \\
& \geq h_{(m,l)}^{top}(f) - 2 \epsilon. \\
\end{split}
\end{equation*}

We take $\Delta$ such that

\begin{equation*}
\begin{split}
&  \frac{1}{n} \log ( \max \# E \mbox{ , } E \mbox{   } (n,\delta) \mbox{-separated   } E \subset X_l^{\delta , n} \mbox{   and   } \forall W \in E \mbox{   ,   } W \subset f^{-n}(\Delta) )  \\
& \geq h_{(m,l)}^{top}(f) - 3 \epsilon, \\
\end{split}
\end{equation*}

and we consider $E$ which maximizes the previous quantity.

Let $W \in E$. By definition, $W= \{ (X,\Phi(X)) \mbox{   ,   } X \in B_l(y, e^{- \delta n}) \}$. Here $y=(y_1, \dots , y_l)$ and we consider

$$W'= \{ (x_1, \cdots , x_{l-1}, y_l, \Phi(x_1 , \cdots , x_{l-1}, y_l)) \mbox{   ,   } (x_1, \cdots , x_{l}) \in B_l(y, e^{- \delta n}) \cap (x_l=y_l) \}.$$

In what follows, we identify $X'=(x_1, \cdots , x_{l-1})$ with $(x_1, \cdots , x_{l-1}, y_l)$.

We have  $B_l(y, e^{- \delta n}) \cap (x_l=y_l)=B_{l-1}(y, e^{- \delta n})$ and 

$$W'= \{ (X',\Psi(X')) \mbox{   ,   } X' \in B_{l-1}(y, e^{- \delta n}) \}$$

with $\Psi(X')=(y_l, \Phi(x_1 , \cdots , x_{l-1}, y_l))$.

Remark that $Lip(\Psi) \leq Lip(\Phi) \leq 1$ and $f^k(W') \subset f^k(W) \subset X \setminus I$ for $k=0, \cdots n-1$. So $W'$ is in $ X_{l-1}^{\delta , n}$.

We use the same method with the others $W \in E$ and we obtain a set $E'$ in $ X_{l-1}^{\delta , n}$ which is $(n, \delta)$-separated (because $W' \subset W$ and $E$ is $(n, \delta)$-separated). Moreover all the $W'$ are contained in $f^{-n}(\Delta)$. So,

\begin{equation*}
\begin{split}
&  \frac{1}{n} \log ( \max \# E' \mbox{ , } E' \mbox{   } (n,\delta) \mbox{-separated   } E' \subset X_{l-1}^{\delta , n} \mbox{   and   } \forall W \in E' \mbox{   ,   } W \subset f^{-n}(\Delta) )  \\
& \geq h_{(m,l)}^{top}(f) - 3 \epsilon, \\
\end{split}
\end{equation*}

and then

\begin{equation*}
\begin{split}
&  \frac{1}{n} \log \sup_{\Delta \in X_m^{\delta} } ( \max \# E' \mbox{ , } E' \mbox{   } (n,\delta) \mbox{-separated   } E' \subset X_{l-1}^{\delta , n} \mbox{   and   } \forall W \in E' \mbox{   ,   } W \subset f^{-n}(\Delta) )  \\
& \geq h_{(m,l)}^{top}(f) - 3 \epsilon. \\
\end{split}
\end{equation*}

Now, take the $\displaystyle \overline{\lim_n}$, the $\displaystyle \overline{\lim_{\delta \to 0}}$ and then the result follows by letting $\epsilon \to 0$.

\end{proof}

\begin{Pro}
For $0 \leq m \leq k-1$ and $0 \leq l \leq m$, we have

$$h_{(m+1,l)}^{top}(f) \geq h_{(m,l)}^{top}(f).$$

\end{Pro}

\begin{proof}

Consider $\Delta \in  X_m^{\delta}$. $\Delta$ is the graph of a holomorphic map $\Phi$ over $B_m(y, \delta)$ with $Lip(\Phi) \leq 1$.

$B_m(y, \delta)$ is contained in a complex plane $P$ of dimension $m$. Take an orthogonal vector $v$ to $P$ and consider $P \oplus \Cc v$. We obtain a complex plane of dimension $m+1$ and inside we construct $B_{m+1}((y,0), \delta)$.

For $X'=(x_1,\cdots , x_{m+1}) \in B_{m+1}((y,0), \delta)$, we define

$$\Psi(X')=(\Phi_2(x_1, \cdots , x_m), \cdots , \Phi_{k-m}(x_1, \cdots , x_m))$$

where $\Phi(X)=\Phi(x_1, \cdots , x_m)=(\Phi_1(x_1, \cdots , x_m), \cdots , \Phi_{k-m}(x_1, \cdots , x_m))$.

We have $Lip(\Psi) \leq Lip(\Phi) \leq 1$ and 

$$\Psi(y,0)=(\Phi_2(y_1, \cdots , y_m), \cdots , \Phi_{k-m}(y_1, \cdots , y_m))=0$$

because $\Phi(y)=0$. It implies that $\Delta'=\{(X', \psi(X')) \mbox{   ,   } X' \in B_{m+1}((y,0), \delta) \} $ is in $X_{m+1}^{\delta}$.

Moreover, $\Delta \subset \Delta'$:

Indeed, take a point $a \in \Delta$. We have $a=(x_1, \cdots ,x_m,\Phi_1(x_1, \cdots , x_m), \cdots , \Phi_{k-m}(x_1, \cdots , x_m) )$ with $(x_1, \cdots ,x_m) \in B_m(y, \delta)$. So

\begin{equation*}
\begin{split}
| \Phi_1(x_1, \cdots ,x_m)|&=| \Phi_1(x_1, \cdots ,x_m)-\Phi_1(y_1, \cdots ,y_m)| \\
& \leq 1 \times \|( x_1, \cdots ,x_m)-(y_1, \cdots ,y_m) \| \leq \delta.
\end{split}
\end{equation*}

Finally $a=(X',\Psi(X'))$ with $X'=(x_1, \cdots ,x_m,\Phi_1(x_1, \cdots , x_m)) \in B_{m+1}((y,0), \delta)$ and so $a \in \Delta'$.

Now, if we have a $(n, \delta)$-separated set $E \subset X_l^{\delta , n}$ such that $\forall W \in E \mbox{   ,   } W \subset f^{-n}(\Delta) $, we have $W \subset f^{-n}(\Delta')$ and then

$$h_{(m+1,l)}^{top}(f) \geq h_{(m,l)}^{top}(f).$$

\end{proof}

\begin{Rem}

For $l=0, \cdots, k$, we have $h_{(k,l)}^{top}(f) \leq h^l_{top}(f)$ where $h^l_{top}(f)$ is the $l$-topological entropy defined in \cite{DetVig}. The equality is not clear: it is not possible in general to put the $f^n(W_i)$ ($W_i$ graphs over a ball $B_l(x, e^{-\delta n})$) in a same $\Delta$ of dimension $k$ like we have done in the proof of Proposition \ref{topological}.

\end{Rem}

\section{\bf{Proofs of the Theorems}}

\subsection{{\bf Pesin's theory and graph transform Theorem}}{\label{paragraphePesin}}

In this paragraph we recall the Pesin's theory and the graph transform Theorem. We follow \cite{Det1} and \cite{DetNgu}.

\subsubsection{{\bf Oseledets' Theorem and Pesin's theory}}{\label{Pesin}}

Recall that $X$ is a compact complex manifold with dimension $k$ and $f: X \rightarrow X$ a meromorphic dominant map. 

We take a family of charts $(\tau_x)_{x \in X}$ which satisfy $\tau_x(0)=x$, $\tau_x$ is defined on the ball $B(0, \epsilon_0) \subset \Cc^k$ with $\epsilon_0$ independant of $x$ and the norm of the derivatives of order one and two of $\tau_x$ on $B(0, \epsilon_0)$ is bounded by above by a constant independant of $x$. To construct these charts, take a finite family $(U_i, \psi_i)$ of charts of $X$ and compose them by tranlations.

Let $\mu$ be an ergodic invariant probability measure such that  $\int \log d(x,\mathcal{A}) d \mu(x) > - \infty$. Define $\Omega=X \setminus \cup_{i \geq 0} f^{-i}(\mathcal{A})$. The measure $\mu$ has no mass on $\mathcal{A}$ and is $f$-invariant, so $\mu$ is a probability on $\Omega$.

We define the natural extension:

$$\widehat{\Omega}:= \{ \widehat{x}=( \cdots, x_0, \cdots , x_n , \cdots) \in \Omega^{\Zz} \mbox{ , } f(x_{n})=x_{n+1} \}.$$

In this space, $f$ induces a map $\widehat{f}$ which is the left-shift. If $\pi$ is the projection $\pi(\widehat{x})=x_0$, then there exists an unique probability measure $\widehat{\mu}$ invariant by $\widehat{f}$ which satisfies $\pi_{*} \widehat{\mu}=\mu$.

Consider $f_x= \tau_{f(x)}^{-1} \circ f \circ \tau_x$ which is well defined in the neighbourhood of $0$ when $x$ is outside $I$. The cocycle at which we apply Pesin's theory is 
\begin{equation*}
\begin{split}
A :\ & \widehat{\Omega} \longrightarrow M_k(\Cc)\\
& \widehat{x} \longrightarrow Df_x(0)\\
\end{split}
\end{equation*}

where $M_k(\Cc)$ is the set of $k \times k$ square matrices with complex coefficients and $\pi(\widehat{x})=x$. By using the hypothesis $\int \log d(x,\mathcal{A}) d \mu(x) > - \infty$, we have (see lemma 7 in \cite{Det1})

$$\int \log^+ \| (A( \widehat{x}))^{\pm 1} \| d \widehat{\mu} ( \widehat{x}) < + \infty.$$

So, we can use Oseledets' Theorem: 

\begin{Thm}{\label{oseledets}}

 There exits real numbers $\lambda_1 > \lambda_2 > \cdots > \lambda_{p_0} > - \infty$, some integers $m_1, \cdots, m_{p_0}$ and a set $\widehat{\Gamma}$ with full measure for $\widehat{\mu}$ such that for $\widehat{x} \in \widehat{\Gamma}$ we have $\Cc^k= \bigoplus_{i=1}^{p_0} E_i(\widehat{x})$ where $E_i(\widehat{x})$ are vector subspaces of dimension $m_i$ which verify:

1) $A(\widehat{x}) E_i(\widehat{x}) = E_i(\widehat{f}(\widehat{x}))$.

2) For $v \in E_i(\widehat{x}) \setminus \{0 \}$, we have 
$$\lim_{n \rightarrow \pm \infty} \frac{1}{n} \log \|A(\widehat{f}^{n-1}(\widehat{x})) \cdots A(\widehat{x}) v \| = \lambda_i.$$

For all $\delta > 0$, there exists a function $C_{\delta} :  \widehat{\Gamma} \longrightarrow GL_k(\Cc)$ such that for $\widehat{x} \in \widehat{\Gamma}$:

1) $\lim_{n \rightarrow \infty} \frac{1}{n} \log \| C^{\pm 1}_{\delta} (\widehat{f}^n(\widehat{x})) \|=0$ (tempered function).

2) $C_{\delta}(\widehat{x})$ sends the canonical decomposition $\bigoplus_{i=1}^{p_0} \Cc^{m_i}$ on $\bigoplus_{i=1}^{p_0} E_i(\widehat{x})$.

3) $A_{\delta}(\widehat{x})= C_{\delta}^{-1}(\widehat{f}(\widehat{x})) A(\widehat{x}) C_{\delta}(\widehat{x})$ is a diagonal block matrix $diag(A^1_{\delta}(\widehat{x}), \cdots, A^{p_0}_{\delta}(\widehat{x}))$ where each $A^{i}_{\delta}(\widehat{x})$ is a square $m_i \times m_i$ matrix and
$$\forall v \in \Cc^{m_i}   \mbox{    } \mbox{  we have   } \mbox{    } e^{\lambda_i - \delta} \|v\| \leq \| A^{i}_{\delta}(\widehat{x}) v \| \leq e^{\lambda_i + \delta} \|v\|.$$

\end{Thm}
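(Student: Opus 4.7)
The plan is to follow the classical route to Oseledets' theorem via Kingman's subadditive ergodic theorem, then produce the Pesin tempered normal form by introducing an adapted (Lyapunov) metric. Throughout, write $A^{(n)}(\widehat{x}) = A(\widehat{f}^{n-1}(\widehat{x})) \cdots A(\widehat{x})$. The integrability assumption $\int \log^+ \|A^{\pm 1}\| d\widehat{\mu} < +\infty$, already established via the hypothesis $\int \log d(x,\mathcal{A}) d\mu(x) > -\infty$, is what powers everything.

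First, I would apply Kingman's subadditive ergodic theorem to each exterior power: for $j = 1, \ldots, k$, the sequence $\widehat{x} \mapsto \log \|\Lambda^j A^{(n)}(\widehat{x})\|$ is subadditive, integrable, and so $\Lambda_j(\widehat{x}) := \lim_n \frac{1}{n} \log \|\Lambda^j A^{(n)}(\widehat{x})\|$ exists almost surely; by ergodicity these limits are constants $\Lambda_j$. The distinct values among $\Lambda_j - \Lambda_{j-1}$ (with $\Lambda_0 = 0$) are the Lyapunov exponents $\lambda_1 > \cdots > \lambda_{p_0}$, and the multiplicity $m_i$ is the number of $j$'s yielding $\lambda_i$. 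Finiteness of $\lambda_{p_0}$ uses $\int \log^+ \|A^{-1}\| d\widehat{\mu} < +\infty$.

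Next, construct the Oseledets splitting. For the forward direction I would apply Raghunathan's argument: the positive self-adjoint matrices $\bigl((A^{(n)}(\widehat{x}))^{*} A^{(n)}(\widehat{x})\bigr)^{1/(2n)}$ converge almost surely as $n \to +\infty$ to a limit $\Lambda^{+}(\widehat{x})$ whose eigenspaces produce the forward Oseledets filtration $V_1^{+}(\widehat{x}) \supset V_2^{+}(\widehat{x}) \supset \cdots$ with $V_i^{+} / V_{i+1}^{+}$ corresponding to exponent $\lambda_i$. Because $\widehat{f}$ is invertible on the natural extension, the same construction applied to the inverse cocycle yields a backward filtration $V_i^{-}(\widehat{x})$. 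I would then define $E_i(\widehat{x}) := V_i^{+}(\widehat{x}) \cap V_i^{-}(\widehat{x})$ and check, using the contracting/expanding behavior along the two filtrations, that these intersect transversally to give $\Cc^k = \bigoplus_{i=1}^{p_0} E_i(\widehat{x})$ with the two-sided asymptotics in item (2) of the theorem. Invariance $A(\widehat{x}) E_i(\widehat{x}) = E_i(\widehat{f}(\widehat{x}))$ is built in.

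Finally, for the Pesin $\delta$-reduction, I would introduce on each $E_i(\widehat{x})$ the Lyapunov norm
\begin{equation*}
\|v\|_{\widehat{x},\delta}^2 = \sum_{n \in \Zz} \|A^{(n)}(\widehat{x}) v\|^2 e^{-2\lambda_i n - 2\delta |n|},
\end{equation*}
which converges thanks to item (2), and declare the spaces $E_i(\widehat{x})$ pairwise orthogonal in the resulting inner product on $\Cc^k$. The bound (3) on each block $A^i_\delta(\widehat{x})$ then falls out of a direct shift-of-index computation in the defining series. Taking $C_\delta(\widehat{x})$ to be the linear map from $\bigoplus_i \Cc^{m_i}$ (equipped with its canonical inner product) onto $\Cc^k$ equipped with $\|\cdot\|_{\widehat{x},\delta}$ and adapted to the splitting gives items (2)--(3). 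The tempered property (1) is the most subtle point: one must show that $\log \|C_\delta^{\pm 1}(\widehat{x})\|$ is a measurable function whose growth along the orbit is subexponential, which follows because the ratio of the Lyapunov norm to the Euclidean norm is controlled by the tails of the defining series, and these tails decay geometrically in a manner governed by the almost sure convergence already proved.

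The main obstacle is the second step: establishing the existence of the splitting $\Cc^k = \bigoplus E_i(\widehat{x})$ with the pointwise two-sided Lyapunov limits for every nonzero vector (not merely for generic ones in each filtration level). This requires combining forward and backward Raghunathan limits and proving transversality, and is where the invertibility provided by the natural extension $\widehat{\Omega}$ is essential.
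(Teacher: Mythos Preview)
The paper does not actually prove this theorem. It is stated as Oseledets' Theorem together with the Pesin tempered reduction, and is simply invoked as a known result once the integrability condition $\int \log^+ \|A(\widehat{x})^{\pm 1}\| \, d\widehat{\mu}(\widehat{x}) < +\infty$ has been verified (this verification is what the sentence preceding the statement accomplishes, citing Lemma~7 of \cite{Det1}). So there is no proof in the paper to compare your proposal against.

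Your sketch is a reasonable outline of the classical proof: Kingman for the exponents, Raghunathan's convergence of $\bigl((A^{(n)})^{*}A^{(n)}\bigr)^{1/(2n)}$ for the filtrations, intersection of forward and backward filtrations on the natural extension for the splitting, and a Lyapunov inner product for the $\delta$-reduction. Two small cautions if you were to flesh this out: the transversality of $V_i^{+}$ and $V_i^{-}$ does require a separate argument (typically an angle estimate showing the angle between complementary filtration pieces is tempered), and the temperedness of $C_\delta^{\pm 1}$ is usually obtained not from tail estimates of the series but by showing that $\log(\|v\|_{\widehat{x},\delta}/\|v\|)$ is bounded by a sum of terms each of which is subexponential along orbits. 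But since the paper treats the whole statement as a black box, none of this is needed here.
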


Denote by $g_{\widehat{x}}$ the function 

$$g_{\widehat{x}}= C_{\delta}^{-1}(\widehat{f}(\widehat{x}))  \circ f_x \circ C_{\delta}(\widehat{x})$$

where $\pi(\widehat{x})=x$. 

We will use the following proposition (see \cite{DetNgu} Proposition 1.1):

\begin{Pro}{\label{prop1}}

There exists a set $\widehat{\Gamma'}$ with full measure for $\widehat{\mu}$ and a measurable map $r_1 : \widehat{\Gamma'} \rightarrow ]0, 1]$ such that for all $\widehat{x} \in \widehat{\Gamma'}$ we have $e^{- \delta} \leq \frac{r_1(\widehat{f}(\widehat{x}))}{r_1(\widehat{x})} \leq e^{\delta}$ and

1) $g_{\widehat{x}}(0)=0$.

2) $D g_{\widehat{x}}(0)=A_{\delta}(\widehat{x})=diag(A^1_{\delta}(\widehat{x}), \cdots, A^{p_0}_{\delta}(\widehat{x}))$.

3) $g_{\widehat{x}}(w)$ is holomorphic for $\| w \| \leq r_1(\widehat{x})$ and $\|D^2 g_{\widehat{x}}(w)\| \leq  \frac{1}{r_1(\widehat{x})}$ for $\| w \| \leq r_1(\widehat{x})$.

In particular, if we consider $g_{\widehat{x}}(w)=D g_{\widehat{x}}(0)w + h(w)$, we have $\| Dh(w) \| \leq \frac{1}{r_1(\widehat{x})} \|w\|$ for $\| w \| \leq r_1(\widehat{x})$.

\end{Pro}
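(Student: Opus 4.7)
The plan is to separate the algebraic content of items (1)--(2) from the analytic estimates in (3), and then convert the resulting measurable radius into a tempered one by a standard Pesin envelope construction.

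Items (1) and (2) are direct computations. Since $\tau_x(0)=x$, we have $f_x(0) = \tau_{f(x)}^{-1}(f(x)) = 0$, and as $C_\delta(\widehat{x})$ is a linear isomorphism fixing $0$,
\[
g_{\widehat{x}}(0) \;=\; C_\delta^{-1}(\widehat{f}(\widehat{x}))\bigl(f_x(0)\bigr) \;=\; 0.
\]
Differentiating at $0$ and using the linearity of $C_\delta$ gives
\[
Dg_{\widehat{x}}(0) \;=\; C_\delta^{-1}(\widehat{f}(\widehat{x}))\cdot A(\widehat{x})\cdot C_\delta(\widehat{x}) \;=\; A_\delta(\widehat{x})
\]
by the very definition of $A_\delta$ in Theorem \ref{oseledets}.

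For (3), I would first produce an auxiliary measurable radius $\rho_0(\widehat{x})>0$ on which $g_{\widehat{x}}$ is holomorphic, by combining three ingredients: (i) the charts $\tau_x$ are uniformly defined on $B(0,\epsilon_0)$ with uniformly bounded first and second derivatives; (ii) since $f$ is holomorphic on $X\setminus I$ and $X$ is compact, the local expression $f_x$ extends holomorphically to a ball whose radius is controlled below by a positive power of $d(x,\mathcal{A})$, with a Cauchy-type second derivative bound controlled above by a negative power of $d(x,\mathcal{A})$; (iii) the norms $\|C_\delta^{\pm 1}\|$ at both $\widehat{x}$ and $\widehat{f}(\widehat{x})$ are tempered by Theorem \ref{oseledets}. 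Composing, one obtains $\rho_0(\widehat{x})>0$ measurable and $N(\widehat{x})\ge 1$ measurable such that $g_{\widehat{x}}$ is holomorphic on $B(0,2\rho_0(\widehat{x}))$ and $\|D^2 g_{\widehat{x}}(w)\| \le N(\widehat{x})/\rho_0(\widehat{x})$ on $B(0,\rho_0(\widehat{x}))$ (by Cauchy estimates on the enclosing ball). Setting $\tilde\rho(\widehat{x}) := \min\bigl(1,\rho_0(\widehat{x})/N(\widehat{x})\bigr)$, one then has $\|D^2 g_{\widehat{x}}\| \le 1/\tilde\rho(\widehat{x})$ on $B(0,\tilde\rho(\widehat{x}))$.

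The final step converts $\tilde\rho$ into a tempered radius with the desired ratio bound. Because $\int \log d(x,\mathcal{A})\,d\mu > -\infty$, Birkhoff yields $\frac{1}{n}\log d(\widehat{f}^n\widehat{x},\mathcal{A})\to 0$ on a full measure set; combined with the temperedness of $\|C_\delta^{\pm 1}\|$ this gives $\frac{1}{n}\log \tilde\rho(\widehat{f}^n\widehat{x}) \to 0$ on a full measure set $\widehat{\Gamma'}$. I then define
\[
r_1(\widehat{x}) \;=\; \inf_{n\in\Zz}\, e^{-\delta|n|}\,\tilde\rho(\widehat{f}^n\widehat{x}),
\]
which is positive on $\widehat{\Gamma'}$, bounded above by $\tilde\rho(\widehat{x})\le 1$, and satisfies $e^{-\delta}\le r_1(\widehat{f}\widehat{x})/r_1(\widehat{x})\le e^{\delta}$ by the shift $n\mapsto n+1$ inside the infimum. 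Since $r_1\le \tilde\rho$, the estimate $\|D^2 g_{\widehat{x}}\|\le 1/r_1(\widehat{x})$ on $B(0,r_1(\widehat{x}))$ is immediate, and the ``in particular'' bound on $\|Dh(w)\|$ follows from $Dh(w) = Dg_{\widehat{x}}(w)-Dg_{\widehat{x}}(0)$ integrated along the segment from $0$ to $w$.

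The main obstacle I expect lies in step (ii): one must carefully track the exact powers of $d(x,\mathcal{A})$ entering both the holomorphicity radius of $f_x$ and the second derivative bound, and then check that after absorbing the $\|C_\delta^{\pm 1}\|$ factors and composing, $\log\tilde\rho$ remains integrable (or at least subexponential along orbits) so that Birkhoff applies to it. That quantitative bookkeeping is the real content; the envelope in the last paragraph is a routine Pesin-style trick.
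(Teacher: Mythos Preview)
Your proposal is correct and follows the standard Pesin-theoretic scheme. Note, however, that the paper does \emph{not} actually prove this proposition: it is simply quoted from \cite{DetNgu}, Proposition~1.1. What the paper does prove is the companion Proposition~\ref{prop2} for $g_{\widehat{x}}^{-1}$, and your argument matches that proof almost step for step: control the holomorphicity radius and second derivative of the local map by explicit powers of $d(x_0,\mathcal{A})$ and by $\|C_\delta^{\pm1}\|$, use the integrability hypothesis and Birkhoff to see that the resulting function is tempered along orbits, and then pass to a $\delta$-slowly varying envelope. The only cosmetic difference is that the paper invokes Lemma~S.2.12 of \cite{KH} for that last step, whereas you write the envelope explicitly as $r_1(\widehat{x})=\inf_{n\in\Zz}e^{-\delta|n|}\tilde\rho(\widehat{f}^n\widehat{x})$; these are the same construction. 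Your self-identified ``obstacle'' (the bookkeeping of the powers of $d(x,\mathcal{A})$) is exactly the content the paper imports from \cite{Det1} and \cite{DetNgu}.
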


We have the same type of proposition for $g_{\widehat{x}}^{-1}$:

\begin{Pro}{\label{prop2}}

There exists a set $\widehat{\Gamma''}$ with full measure for $\widehat{\mu}$ and a measurable map $r_2 : \widehat{\Gamma'} \rightarrow ]0, 1]$ such that for all $\widehat{x} \in \widehat{\Gamma''}$ we have $e^{- \delta} \leq \frac{r_2(\widehat{f}(\widehat{x}))}{r_2(\widehat{x})} \leq e^{\delta}$ and

1) $g^{-1}_{\widehat{x}}(0)=0$.

2) $Dg^{-1}_{\widehat{x}}(0)=(D g_{\widehat{x}}(0))^{-1}=A_{\delta}^{-1}(\widehat{x})$=$diag((A^1_{\delta}(\widehat{x}))^{-1}, \cdots, (A^{p_0}_{\delta}(\widehat{x}))^{-1})$.

3) $g^{-1}_{\widehat{x}}(w)$ is holomorphic for $\| w \| \leq r_2(\widehat{x})$ and $\|D^2 g^{-1}_{\widehat{x}}(w)\| \leq  \frac{1}{r_2(\widehat{x})}$ for $\| w \| \leq r_2(\widehat{x})$.

In particular, if we consider $g^{-1}_{\widehat{x}}(w)=D g^{-1}_{\widehat{x}}(0)w + h(w)$, we have $\| Dh(w) \| \leq \frac{1}{r_2(\widehat{x})} \|w\|$ for $\| w \| \leq r_2(\widehat{x})$.

\end{Pro}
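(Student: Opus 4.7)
The plan is to obtain Proposition~\ref{prop2} from Proposition~\ref{prop1} via a quantitative holomorphic inverse function theorem applied to $g_{\widehat{x}}$ near the origin. The key observation is that by item~3 of Theorem~\ref{oseledets}, each block inverse satisfies $\|(A^i_\delta(\widehat{x}))^{-1}\| \leq e^{-\lambda_i + \delta}$, so the constant $M := \max_i e^{-\lambda_i + \delta}$ is a \emph{uniform} (independent of $\widehat{x}$) upper bound for $\|A_\delta(\widehat{x})^{-1}\|$. This uniformity is what will transfer the tempered estimate on $r_1$ directly to $r_2$ with the same $\delta$.

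On $\widehat{\Gamma'}$, Proposition~\ref{prop1} lets me write $g_{\widehat{x}}(w) = A_\delta(\widehat{x}) w + h(w)$ with $\|Dh(w)\| \leq \|w\|/r_1(\widehat{x})$ and hence, using $h(0) = Dh(0) = 0$ together with $\|D^2 g_{\widehat{x}}\| \leq 1/r_1(\widehat{x})$, also $\|h(w)\| \leq \|w\|^2/(2r_1(\widehat{x}))$ on $B(0, r_1(\widehat{x}))$. To solve $g_{\widehat{x}}(w) = z$, I set $\Phi_z(w) := A_\delta(\widehat{x})^{-1}(z - h(w))$. On $B(0, r)$ with $r := r_1(\widehat{x})/(4M)$, the map $\Phi_z$ is $1/4$-Lipschitz, and a short calculation using the quadratic estimate on $\|h(w)\|$ shows that $\Phi_z$ maps $B(0, r)$ into itself provided $\|z\|$ is below an explicit fraction of $r_1(\widehat{x})$. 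Banach's fixed point theorem then produces a unique preimage $g_{\widehat{x}}^{-1}(z)$ in $B(0, r)$ for each such $z$.

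Holomorphicity of $g_{\widehat{x}}^{-1}$ follows from the complex inverse function theorem, since $Dg_{\widehat{x}}(w)$ remains invertible on $B(0, r)$ as a small perturbation of $A_\delta(\widehat{x})$; differentiating $g_{\widehat{x}} \circ g_{\widehat{x}}^{-1} = \mathrm{Id}$ at $0$ gives $Dg_{\widehat{x}}^{-1}(0) = A_\delta(\widehat{x})^{-1} = \mathrm{diag}((A^1_\delta(\widehat{x}))^{-1}, \ldots, (A^{p_0}_\delta(\widehat{x}))^{-1})$, settling items 1) and 2). Differentiating the same identity a second time expresses $D^2 g_{\widehat{x}}^{-1}(z)$ as a contraction of $D^2 g_{\widehat{x}}(g_{\widehat{x}}^{-1}(z))$ with three copies of $Dg_{\widehat{x}}^{-1}(z)$; since the contraction estimate forces $\|Dg_{\widehat{x}}^{-1}\| \leq 2M$ on the domain and $\|D^2 g_{\widehat{x}}\| \leq 1/r_1(\widehat{x})$, we obtain $\|D^2 g_{\widehat{x}}^{-1}\| \leq C/r_1(\widehat{x})$ for $C := 8M^3$, a constant independent of $\widehat{x}$.

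Setting $r_2(\widehat{x}) := c \cdot r_1(\widehat{x})$ for a single small uniform constant $c$ (chosen below $1/(4M)$, below $1/C$, and below the threshold dictated by the self-mapping condition), and $\widehat{\Gamma''} := \widehat{\Gamma'}$, all three items of Proposition~\ref{prop2} hold simultaneously on $B(0, r_2(\widehat{x}))$, and the tempered property is immediate: $r_2(\widehat{f}(\widehat{x}))/r_2(\widehat{x}) = r_1(\widehat{f}(\widehat{x}))/r_1(\widehat{x}) \in [e^{-\delta}, e^{\delta}]$ because $c$ does not depend on $\widehat{x}$. The main obstacle is really bookkeeping rather than any deep point: the single constant $c$ must simultaneously enforce the contraction, the self-mapping of $B(0, r)$ for every $z \in B(0, r_2(\widehat{x}))$, and the second-derivative bound $\|D^2 g_{\widehat{x}}^{-1}\| \leq 1/r_2(\widehat{x})$; once the explicit estimates above are in hand, this reduces to a finite intersection of inequalities of the form $c \leq \mathrm{const}(M)$.
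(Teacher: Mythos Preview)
Your argument is correct and takes a genuinely different, more self-contained route than the paper. The paper does not bootstrap from Proposition~\ref{prop1}; instead it goes back to the ambient map, invoking from \cite{Det1} explicit polynomial bounds of the form $\|Df_{x_0}^{-1}(w)\| + \|D^2 f_{x_0}^{-1}(w)\| \leq \tau\, d(x_0,\mathcal{A})^{-p'}$ on a ball of radius $\epsilon_0'\, d(x_0,\mathcal{A})^p/\|C_\delta(\widehat{f}(\widehat{x}))\|$, then transports these through $C_\delta$ to bound $\|D^2 g_{\widehat{x}}^{-1}\|$ by a function $\alpha(\widehat{x})$ built from $d(x_0,\mathcal{A})$ and $\|C_\delta^{\pm 1}\|$. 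Showing $\alpha$ is tempered requires Birkhoff's theorem (using the hypothesis $\int \log d(x,\mathcal{A})\,d\mu > -\infty$) together with the regularisation Lemma~S.2.12 of \cite{KH} to produce an $\alpha_\delta \geq \alpha$ with the exact $e^{\pm\delta}$ ratio; then $r_2 := 1/\alpha_\delta$.

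Your key observation --- that the Oseledets normal form forces $\|A_\delta(\widehat{x})^{-1}\| \leq \max_i e^{-\lambda_i+\delta}$ \emph{uniformly} in $\widehat{x}$ --- lets you run a quantitative inverse function theorem directly on $g_{\widehat{x}}$ and obtain $r_2 = c\cdot r_1$ for a single constant $c$, so the tempered bound is inherited for free and no appeal to Birkhoff or \cite{KH} is needed. This is cleaner for the purpose at hand. The paper's approach, by contrast, makes the dependence on $d(x,\mathcal{A})$ explicit, which is informative if one later needs quantitative control of Pesin charts near the critical set. One small point to tidy in your write-up: when $M < 1/4$ the radius $r_1/(4M)$ can exceed $r_1$, so you should cap $r$ at $r_1$ (equivalently, replace $M$ by $\max(M,1)$); this changes nothing in the constants.
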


\begin{proof}

We use the proposition 9 in \cite{Det1}. Notice that $g^{-1}_{\widehat{x}}$ here corresponds to $g^{-1}_{\widehat{f}(\widehat{x})}$ in \cite{Det1}.

So for $\widehat{x} \in \widehat{\Gamma}$, $g^{-1}_{\widehat{x}}(w)$ is defined and holomorphic for $\|w\| \leq 2 \epsilon_0' \frac{d(x_0, \mathcal{A})^p}{\| C_{\delta}(\widehat{f}(\widehat{x})) \|}$, $g^{-1}_{\widehat{x}}(0)=0$ and  $Dg^{-1}_{\widehat{x}}(0)=(D g_{\widehat{x}}(0))^{-1}=A_{\delta}^{-1}(\widehat{x})$ (see the end of page 99 in \cite{Det1}).

We have too:

$$\| D f^{-1}_{x_0} (w) \| + \| D^2 f^{-1}_{x_0} (w) \| \leq \tau d(x_0, \mathcal{A})^{-p'}$$

for $\|w\| \leq \epsilon_0' d(x_0, \mathcal{A})^p$.

Now, 

$$D g^{-1}_{\widehat{x}}(w)= C^{-1}_{\delta}(\widehat{x}) \circ  D f^{-1}_{x_0}  (C_{\delta}(\widehat{f}(\widehat{x}))(w)) \circ C_{\delta}(\widehat{f}(\widehat{x}))$$

which gives

$$\| D^2 g^{-1}_{\widehat{x}}(w) \| = \|C^{-1}_{\delta}(\widehat{x})\| \| D^2 f^{-1}_{x_0}(C_{\delta}(\widehat{f}(\widehat{x}))(w))\|  \|  C_{\delta}(\widehat{f}(\widehat{x}) \|^2 .$$

So for $\|w\| \leq \epsilon_0' \frac{d(x_0, \mathcal{A})^p}{\| C_{\delta}(\widehat{f}(\widehat{x})) \|}$,

$$
\| D^2 g^{-1}_{\widehat{x}}(w) \| \leq  \tau d(x_0, \mathcal{A})^{-p'} \|C^{-1}_{\delta}(\widehat{x})\|   \|  C_{\delta}(\widehat{f}(\widehat{x})) \|^2. $$

We define

$$\alpha(\widehat{x})= \max \left( 1 ,\frac{\| C_{\delta}(\widehat{f}(\widehat{x})) \|}{ \epsilon_0' d(x_0, \mathcal{A})^p}, \tau d(x_0, \mathcal{A})^{-p'} \|C^{-1}_{\delta}(\widehat{x})\|   \|  C_{\delta}(\widehat{f}(\widehat{x})) \|^2 \right).$$

Since $\int \log d(\pi(\widehat{x}), \mathcal{A}) d \widehat{\mu}( \widehat{x})= \int \log d(x, \mathcal{A}) d \mu(x) > - \infty$, the Birkhoff's Theorem implies that there exists a set $\widehat{\Gamma}'' \subset \Gamma$ with full measure for $ \widehat{\mu}$ on which

$$\lim_{n \to + \infty} \frac{1}{n} \sum_{i=0}^{n-1} \log d(\pi(\widehat{f}^{i} (\widehat{x})), \mathcal{A})= \lim_{n \to + \infty} \frac{1}{n} \sum_{i=0}^{n-1} \log d(\pi(\widehat{f}^{-i} (\widehat{x})), \mathcal{A})  >- \infty.$$

So for $\widehat{x} \in \widehat{\Gamma}''$, we have

$$\lim_{n \to \pm \infty} \frac{1}{n} \log d(\pi(\widehat{f}^{n} (\widehat{x})), \mathcal{A})=0.$$

Since $\|C^{\pm}_{\delta}(\widehat{x})\|$ are tempered functions, $\alpha(\widehat{x})$ inherits the same property.

If we use the lemma S.2.12 in \cite{KH}, there exists a function $\alpha_{\delta} \geq \alpha$ such that $\e^{- \delta} \leq \frac{\alpha_{\delta}(\widehat{f}(\widehat{x}))}{\alpha_{\delta}(\widehat{x})} \leq e^{\delta}$ for all $\widehat{x} \in \widehat{\Gamma}''$.

Define $r_2(\widehat{x})= \frac{1}{\alpha_{\delta}(\widehat{x})}$ (which is $\leq 1$). We have $\e^{- \delta} \leq \frac{r_2(\widehat{f}(\widehat{x}))}{r_2(\widehat{x})} \leq e^{\delta}$ for all $\widehat{x} \in \widehat{\Gamma}''$, the function $g^{-1}_{\widehat{x}}(w)$ is defined and holomorphic for 

$$\|w\| \leq r_2(\widehat{x})= \frac{1}{\alpha_{\delta}(\widehat{x})} \leq \frac{1}{\alpha(\widehat{x})} \leq \epsilon_0' \frac{d(x_0, \mathcal{A})^p}{\| C_{\delta}(\widehat{f}(\widehat{x})) \|}$$

and finally

$$\| D^2 g^{-1}_{\widehat{x}}(w) \| \leq \alpha(\widehat{x}) \leq \alpha_{\delta}(\widehat{x}) = \frac{1}{r_2(\widehat{x})}$$

for $\|w\| \leq r_2(\widehat{x})$.

\end{proof}

\subsection{{\bf Graph transform Theorem}}

Consider $\Cc^k$ endowed of the norm $\|x\|=\max(|x_1|, \cdots , |x_k|)$. Define $B_l(0,R)$ the ball with center $0$ and radius $R$ in $\Cc^l$. Consider

$$g(X,Y)=(g_1(X,Y),g_2(X,Y))=(AX + R(X,Y), BY + U(X,Y))$$

with $(X,0) \in E_1$, $(0,Y) \in E_2$ and $A: \Cc^{k_1} \longrightarrow \Cc^{k_1}$, $B: \Cc^{k_2} \longrightarrow \Cc^{k_2}$ linear maps with $k=k_1 + k_2$. We suppose that $g: B_k(0,R_0) \longrightarrow B_k(0, R_1)$ is holomorphic with $R_0 \leq R_1$, $g(0)=0$ and $\max(\|DR(Z)\|, \|DU(Z)\|) \leq \gamma$ on $B_k(0,R_0)$. 

We suppose $A$ invertible, $\|B\| < \|A^{-1}\|^{-1}$ and we define $\xi=1 - \|B\| \|A^{-1}\| \in ]0,1]$.

We use the following version of the graph transform Theorem (see the paragraph 4 in \cite{Det1} and the Theorem 3 in \cite{DetNgu}) : 

\begin{Thm}{\label{graph}}

Let $\{(X, \phi(X)), X \in D\}$ be a graph in $B_k(0,R_0)$ over a part $D$ in $E_1$ which verifies $Lip(\phi)\leq \gamma_0 \leq 1$. 

If $\gamma \|A^{-1}\| (1 + \gamma_0)<1$ then the image by $g$ of this graph is a graph over $\pi_0(g( \mbox{graph of } \phi))$ where $\pi_0$ is the projection on $E_1$. Moreover, if $(X, \psi(X))$ is this new graph, we have:

$$\| \psi(X_1) - \psi(X_2) \| \leq \frac{\| B \| \gamma_0 + \gamma( 1 + \gamma_0)}{\|A^{-1}\|^{-1} - \gamma(1 + \gamma_0)}\|X_1 - X_2\|$$

which is smaller than  $\gamma_0 \|X_1 - X_2\|$ if $\gamma \leq \epsilon(\gamma_0, \xi)$.

Finally, if $B_{k_1}(0, \alpha) \subset D$ and $\| \phi(0)\| \leq \beta$, then $\pi_0(g( \mbox{graph of } \phi))$ contains $B_{k_1}(0, (\|A^{-1}\|^{-1} - \gamma(1 + \gamma_0)) \alpha  - \gamma \beta )$ and $\| \Psi(0) \| \leq (1 + \gamma_0)(\| B \| \beta + \gamma \beta + \| D^2g\|_{B_k(0,R_0)} \beta^2)$ (if $\gamma \leq \epsilon(\gamma_0, \xi)$).

\end{Thm}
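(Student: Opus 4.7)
The plan is to prove the theorem via the contraction mapping principle applied to the ``inverse'' problem of finding preimages under $\pi_0 \circ g$ on the graph of $\phi$. Given $X' \in E_1$, solving $g_1(X, \phi(X)) = X'$ amounts to finding a fixed point of $T(X) = A^{-1} X' - A^{-1} R(X, \phi(X))$. Decomposing $R(X_1, \phi(X_1)) - R(X_2, \phi(X_2))$ by fixing first the second argument then the first, and using $\|DR\| \leq \gamma$ together with $Lip(\phi) \leq \gamma_0$, one gets $\|R(X_1, \phi(X_1)) - R(X_2, \phi(X_2))\| \leq \gamma(1+\gamma_0)\|X_1 - X_2\|$, so $T$ is a $\|A^{-1}\|\gamma(1+\gamma_0)$-contraction. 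Under the standing hypothesis $\gamma\|A^{-1}\|(1+\gamma_0) < 1$ it has a unique fixed point, which simultaneously shows that $\pi_0 \circ g$ is injective on the graph of $\phi$ (so the image is a graph) and defines $\psi$ by $\psi(X') = g_2(X, \phi(X))$.

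For the Lipschitz bound, if $X'_1, X'_2$ correspond to preimages $X_1, X_2$, then the identity $X_1 - X_2 = A^{-1}(X'_1 - X'_2) - A^{-1}[R(X_1, \phi(X_1)) - R(X_2, \phi(X_2))]$ gives $\|X_1 - X_2\| \leq \|X'_1 - X'_2\|/(\|A^{-1}\|^{-1} - \gamma(1+\gamma_0))$. Then $\psi(X'_1) - \psi(X'_2) = B(\phi(X_1) - \phi(X_2)) + U(X_1, \phi(X_1)) - U(X_2, \phi(X_2))$, bounded by $(\|B\|\gamma_0 + \gamma(1+\gamma_0))\|X_1 - X_2\|$. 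Combining yields the advertised Lipschitz constant, and requiring this constant to be $\leq \gamma_0$ is an algebraic inequality in $\gamma, \gamma_0, \|B\|, \|A^{-1}\|$ that amounts, after using $\xi = 1 - \|B\|\|A^{-1}\|$, to $\gamma \leq \epsilon(\gamma_0, \xi)$ for an explicit $\epsilon$.

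The domain claim uses the same contraction: I check that $T$ stabilizes $B_{k_1}(0, \alpha)$ whenever $\|X'\| \leq (\|A^{-1}\|^{-1} - \gamma(1+\gamma_0))\alpha - \gamma\beta$, via $\|R(X, \phi(X))\| \leq \|R(0, \phi(0))\| + \gamma(1+\gamma_0)\|X\| \leq \gamma\beta + \gamma(1+\gamma_0)\alpha$ (using $R(0,0) = 0$ from $g(0)=0$ and the Lipschitz estimate for $R$ restricted to the graph). For $\|\psi(0)\|$, I locate the unique $X^*$ with $g_1(X^*, \phi(X^*)) = 0$; the same contraction gives $\|X^*\|$ of order $\gamma\beta$. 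Then $\psi(0) = B\phi(X^*) + U(X^*, \phi(X^*))$, where the first term contributes $\|B\|\beta$ up to a factor involving $\gamma_0\|X^*\|$ (hence the $(1+\gamma_0)$ prefactor via $\|\phi(X^*)\| \leq \beta + \gamma_0\|X^*\|$), while $U$ is expanded by Taylor around $(0,0)$: the linear part is absorbed into the $\gamma\beta$ term and the quadratic remainder contributes the $\|D^2 g\|\beta^2$ correction.

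The main obstacle is the last bound on $\|\psi(0)\|$: the earlier steps are textbook contraction-mapping bookkeeping, but extracting the refined $\|D^2 g\|\beta^2$ term requires promoting the first-order Lipschitz estimate on $U$ to a genuinely second-order Taylor estimate, which rests on the fact that this theorem will be applied to maps $g_{\widehat{x}}$ of the Pesin-adapted form of Propositions \ref{prop1} and \ref{prop2}, where $R$ and $U$ have vanishing first derivatives at $0$ and second derivative controlled by $1/r_1(\widehat{x})$. All the constants $\|A^{-1}\|^{-1} - \gamma(1+\gamma_0)$, $\xi$, and $\epsilon(\gamma_0, \xi)$ have to be tracked carefully so that later applications, in which $\gamma$ will be taken arbitrarily small, produce the correct quantitative control on Pesin blocks.
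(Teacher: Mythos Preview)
The paper does not actually prove this theorem: it is quoted from \cite{Det1} (paragraph 4) and \cite{DetNgu} (Theorem 3), so there is no in-paper proof to compare against. Your contraction-mapping argument is the standard route and is correct for the first three claims: the injectivity of $X\mapsto g_1(X,\phi(X))$, the Lipschitz bound on $\psi$, and the radius $(\|A^{-1}\|^{-1}-\gamma(1+\gamma_0))\alpha-\gamma\beta$ for the image domain all fall out exactly as you describe.

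Two remarks on the last estimate $\|\psi(0)\|\leq(1+\gamma_0)(\|B\|\beta+\gamma\beta+\|D^2g\|\beta^2)$. First, it is cleaner to go through the \emph{image} of $(0,\phi(0))$ rather than the preimage $X^*$ of $0$: if $(X'_0,Y'_0)=g(0,\phi(0))$ then $\psi(X'_0)=Y'_0$, and since $Lip(\psi)\leq\gamma_0$ one gets $\|\psi(0)\|\leq\|Y'_0\|+\gamma_0\|X'_0\|$ with $\|X'_0\|=\|R(0,\phi(0))\|$ and $\|Y'_0\|\leq\|B\|\beta+\|U(0,\phi(0))\|$; a second-order Taylor expansion of $R(0,\cdot)$ and $U(0,\cdot)$ at $0$ then gives $\|R(0,\phi(0))\|,\|U(0,\phi(0))\|\leq\gamma\beta+\|D^2g\|\beta^2$, and the stated bound follows. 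Second, this does \emph{not} require the Pesin normal form of Propositions \ref{prop1}--\ref{prop2}: the theorem is stated and holds for a general $g$ with $\|DR\|,\|DU\|\leq\gamma$ and bounded $D^2g$, so your closing caveat that the $\|D^2g\|\beta^2$ term ``rests on'' vanishing first derivatives of $R,U$ at $0$ is a slight misreading. In the Pesin applications that structure makes $\gamma$ itself of order $\|D^2g\|R_0$, which is why both correction terms are kept in the statement, but the inequality itself is general.
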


\subsection{\bf{Proof of the Theorem \ref{th2}}}

Let $\mu$ be an ergodic invariant probability with $\int \log d(x, \mathcal{A}) d \mu(x) > - \infty$ and suppose that its Lyapounov exponents satisfy

$$\chi_1 \geq \cdots \geq \chi_s > 0 \geq  \chi_{s+1} \geq \cdots \geq \chi_{s+l_0}= \cdots = \chi_{s+l_1} >  \chi_{s+l_1+1} \geq \cdots \geq \chi_k.$$

We can suppose that $h_{\mu}(f)+ 2 \chi_{s+1} + \cdots + 2 \chi_{s+l_1} > 0$, otherwise there is nothing to do.

Here is the plan of the proof: in the first part we contruct points $x_1, \cdots , x_{N'}$ which are $(n, \delta)$-separated and with good properties. In the second part we give the contruction of $\Delta \in X_{k-s-l_1}^{ \delta}$ which occurs in the definition of $h_{(k-s-l_1,k-s-l_1)}^{top}(f)$. Finally, we construct the $W \subset f^{-n}(\Delta)$ to obtain the Theorem.  

\subsubsection{Construction of $x_1, \cdots , x_{N'}$}

The Lyapounov exponents do not depend on the choice of the charts of $X$, so we can suppose that $\mu$ has no mass on the boundaries of charts $(U_i, \psi_i)$.

In particular, if we choose $\epsilon_1 >0$ small enough, the mass for $\mu$ of a $\epsilon_1$-neighbourhood $V_{\epsilon_1}$ of the boundaries of these charts is smaller than $\frac{1}{10}$.

Define $d_n(x,y)=\displaystyle \max_{0 \leq i \leq n-1} dist(f^{i}(x), f^{i}(y))$ and $B_n(x, \delta)$ the ball with center $x$ and radius $\delta$ for the metric $d_n$.

By using the Brin-Katok's theorem, we have 

$$h_{\mu}(f)= \lim_{\delta \to 0} \liminf_{n \to + \infty} - \frac{1}{n} \log \mu B_n(x , \delta)$$

for $\mu$ almost every $x$.

Take $\epsilon >0$ and define

$$\Lambda_{\delta \mbox{,} n}= \{ x \mbox{  ,  } \mu B_n(x, 6 \delta) \leq \e^{- h_{\mu}(f)n + \epsilon n} \}.$$

If $\delta$ is small enough, we have

$$\frac{9}{10} \leq \mu( \{ x \mbox{  ,  } \liminf_{n \to + \infty} - \frac{1}{n} \log \mu B_n(x , 6 \delta)  \geq h_{\mu}(f) - \frac{\epsilon}{2} \} ) \leq \mu \left( \cup_{n_0} \cap_{n \geq n_0} \Lambda_{\delta \mbox{,} n} \right).$$

So, if $n_0$ is high enough, we have $\displaystyle \mu \left( \cap_{n \geq n_0} \Lambda_{\delta \mbox{,} n} \right) \geq \frac{8}{10}$ and then we define $\Lambda_{n_0} = V_{\epsilon_1}^c \cap \left( \cap_{n \geq n_0} \Lambda_{\delta \mbox{,} n} \right)$. We obtain $\mu(\Lambda_{n_0}) \geq \frac{7}{10}$.

By using Lusin's Theorem, we can find a compact set $\widehat{\Gamma_0} \subset  \widehat{\Gamma} \cap \widehat{\Gamma'} \cap \widehat{\Gamma''}$ (where $\widehat{\Gamma}$, $\widehat{\Gamma'}$ and $\widehat{\Gamma''}$ are defined in the Theorem \ref{oseledets} and Propositions \ref{prop1} and \ref{prop2}) with mass $\geq \frac{9}{10}$ for $\widehat{\mu}$, such that $\widehat{x} \longrightarrow C_{\delta}^{\pm 1}(\widehat{x})$, $\widehat{x} \longrightarrow r_{1}(\widehat{x})$, $\widehat{x} \longrightarrow r_{2}(\widehat{x})$ are continuous on $\widehat{\Gamma_0}$.

For $\alpha_0 >0$ small enough, we have $\alpha_0 \leq \| C_{\delta}^{\pm 1}(\widehat{x}) \| \leq \frac{1}{\alpha_0}$, $r_1(\widehat{x}) \geq \alpha_0$ and $r_2(\widehat{x}) \geq \alpha_0$ on $\widehat{\Gamma_0}$.

Fix $n \geq n_0$ high with respect to $\delta$ and consider

$$\Lambda=\Lambda_{n_0} \cap f^{-n}(\Lambda_{n_0}) \cap \pi(\widehat{f}^{-n}(\widehat{\Gamma_0}) \cap \widehat{\Gamma_0}).$$

This set verifies $\mu(\Lambda) \geq 1-\frac{3}{10}-\frac{3}{10}-\frac{1}{10}-\frac{1}{10}=\frac{1}{5}$.

If $x \in \Lambda$, we have $\mu B_n(x, 6 \delta) \leq \e^{- h_{\mu}(f)n + \epsilon n} $ so we can find $x_1, \cdots, x_N \in X$ with $N \geq \frac{1}{5} \e^{ h_{\mu}(f)n - \epsilon n}$ which are $(n, 6 \delta)$-separated and such that $x_i=\pi( \widehat{x_i}) \in \Lambda_{n_0}$, $f^n(x_i)= \pi(\widehat{f}^n( \widehat{x_i})) \in \Lambda_{n_0}$ with $\widehat{x_i}, \widehat{f}^n( \widehat{x_i}) \in \widehat{\Gamma_0}$ (for $i=1, \cdots, N$).

Recall that we have

$$\chi_1 \geq \cdots \geq \chi_s > 0 \geq  \chi_{s+1} \geq \cdots \geq \chi_{s+l_0}= \cdots = \chi_{s+l_1} >  \chi_{s+l_1+1} \geq \cdots \geq \chi_k.$$

Denote by $E_1(\widehat{x}), \cdots ,E_m(\widehat{x})$ the $E_i(\widehat{x})$ of the Oseledets' Theorem which correspond to $\chi_1 , \cdots , \chi_{s+l_1}$ and $E_{m+1}(\widehat{x}), \cdots ,E_q(\widehat{x})$ the $E_i(\widehat{x})$ of $\chi_{s+l_1+1}, \cdots , \chi_k$.

Define $\displaystyle E^{u}(\widehat{x})= \oplus_{i=1}^{m} E_i(\widehat{x})$ and $\displaystyle E^{s}(\widehat{x})= \oplus_{i=m+1}^{q} E_i(\widehat{x})$. It will be usefull to decompose $E^{u}(\widehat{x})$ into $E^{u}(\widehat{x})=E_1^{u}(\widehat{x}) \oplus E_2^{u}(\widehat{x})$ where $E_1^{u}(\widehat{x})$ corresponds to the positive Lyapounov exponents.

We subdivide $X$ into cubes of size $\e^{-8 \delta n}$. There are $\e^{16k \delta n}$ such cubes (modulo a multiplicative constant which depends only on $X$).

We can find one of these cubes, $C$ which contains at least $N'= \frac{1}{5} \e^{ h_{\mu}(f)n - \epsilon n} \e^{-16k \delta n}$ points $f^n(x_i)$. To simplify the notations, we suppose that it corresponds to the indices $i=1, \dots , N'$.

Now, we want to find a complex plane $\Delta$ with dimension $k-s-l_1$ that we will pull-back (from $f^n(x_i)$ to $x_i$) to obtain "stables" manifolds in $f^{-n}(\Delta)$. The difficulty is to have a $\Delta$ which is common to a large number of $f^n(x_i)$.

\subsubsection{Construction of $\Delta$}

To obtain $\Delta$, there are two steps: in the first one, we construct "unstable" manifolds $W_n(f^n(x_i))$ ($i=1, \cdots , N'$) by using push-forward and graph transforms. Then, we bound by below the volume of these manifolds and we take $\Delta$ which intersects a large many number of them.

\medskip

{\bf Step 1 Construction of "unstable" manifolds $W_n(f^n(x_i))$ ($i=1, \cdots , N'$):}

\medskip

Fix $f^n(x_{i_0})$ one of the $f^n(x_i)$. Take $0< \gamma_0 <1$ with $ \frac{\gamma_0}{\alpha_0^2} \leq 1$.

For $i \in \{1, \cdots , N' \}$ we consider the coordinate system $C_{\delta}^{-1}(\widehat{x_i})E^{u}(\widehat{x_i}) \oplus C_{\delta}^{-1}(\widehat{x_i}) E^{s}(\widehat{x_i})$. 

In these coordinates, we begin with the graph $(X, \Phi_0(X))$ with $X \in B_{s}(0, \e^{-4 \delta n}) \times B_{l_1}(0, \e^{-8 \delta n}) \subset C_{\delta}^{-1}(\widehat{x_i})E_1^{u}(\widehat{x_i}) \oplus C_{\delta}^{-1}(\widehat{x_i}) E_2^{u}(\widehat{x_i})$ and $\Phi_0 \equiv 0$.

We push forward this graph by $g_{\widehat{f}^l(\widehat{x})}$ ($l=0, \cdots, n-1$), by using the following Lemma:

\begin{Lem}

Fix $l=0,\cdots , n-1$. In the coordinate system

$$C_{\delta}^{-1}(\widehat{f}^l(\widehat{x_i}))E^{u}(\widehat{f}^l(\widehat{x})) \oplus C_{\delta}^{-1}(\widehat{f}^l(\widehat{x_i})) E^{s}(\widehat{f}^l(\widehat{x}))$$

we consider a graph $(X, \Phi(X))$ over a part contained in $B_{s+l_1}(0, \e^{-4 \delta n}) \subset C_{\delta}^{-1}(\widehat{f}^l(\widehat{x_i}))E^{u}(\widehat{f}^l(\widehat{x}))$ with $Lip( \Phi) \leq \gamma_0$ and $\Phi(0)=0$. The image by $g_{\widehat{f}^l(\widehat{x_i})}$ is a graph $(X, \Psi(X))$ over a part of $C_{\delta}^{-1}(\widehat{f}^{l+1}(\widehat{x_i}))E^{u}(\widehat{f}^{l+1}(\widehat{x}))$
which satisfies $Lip( \Psi) \leq \gamma_0$.

\end{Lem}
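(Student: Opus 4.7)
The plan is to apply the graph transform theorem (Theorem~\ref{graph}) to the map $g = g_{\widehat{f}^l(\widehat{x_i})}$ with the splitting $E^u \oplus E^s$. By Proposition~\ref{prop1}, $g$ satisfies $g(0) = 0$ and its linear part $Dg(0) = A_\delta(\widehat{f}^l(\widehat{x_i}))$ is block-diagonal with respect to this splitting. Writing $g(X, Y) = (AX + R(X, Y), BY + U(X, Y))$ where $A$ is the block on $E^u$ and $B$ the block on $E^s$, Oseledets' theorem (Theorem~\ref{oseledets}) gives $\|A^{-1}\|^{-1} \geq e^{\chi_{s+l_1} - \delta}$ and $\|B\| \leq e^{\chi_{s+l_1+1} + \delta}$. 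Because of the spectral gap $\chi_{s+l_1} > \chi_{s+l_1+1}$, for $\delta$ small enough the quantity $\xi = 1 - \|B\|\|A^{-1}\|$ is bounded below by a positive constant, uniformly in $i$ and $l$.

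Next I would control the nonlinear perturbation $h = g - Dg(0)$. Proposition~\ref{prop1} supplies $\|Dh(w)\| \leq \|w\|/r_1(\widehat{f}^l(\widehat{x_i}))$ on the ball of radius $r_1$. Since $\widehat{x_i} \in \widehat{\Gamma_0}$ gives $r_1(\widehat{x_i}) \geq \alpha_0$, and the cocycle inequality $r_1(\widehat{f}(\widehat{x}))/r_1(\widehat{x}) \geq e^{-\delta}$ yields $r_1(\widehat{f}^l(\widehat{x_i})) \geq \alpha_0 e^{-\delta n}$ for $0 \leq l \leq n$. The graph $(X, \Phi(X))$ lies in the ball of radius $e^{-4\delta n}$, since $Lip(\Phi) \leq \gamma_0 \leq 1$ and $\Phi(0) = 0$ force $\|\Phi(X)\| \leq \gamma_0 \|X\| \leq e^{-4\delta n}$. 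For $n$ large enough (depending only on $\delta$ and $\alpha_0$), we have $e^{-4\delta n} \leq \alpha_0 e^{-\delta n}$, so the graph sits inside the domain of validity of Proposition~\ref{prop1}, and there the bound $\max(\|DR\|, \|DU\|) \leq \gamma := e^{-3\delta n}/\alpha_0$ holds uniformly.

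For $n$ sufficiently large this $\gamma$ tends to zero, so in particular $\gamma \leq \epsilon(\gamma_0, \xi)$ and $\gamma \|A^{-1}\|(1 + \gamma_0) < 1$ are both satisfied. Theorem~\ref{graph} then applies and yields that the image of the graph of $\Phi$ under $g$ is a graph $(X, \Psi(X))$ over a subset of $C_\delta^{-1}(\widehat{f}^{l+1}(\widehat{x_i}))E^u(\widehat{f}^{l+1}(\widehat{x}))$ with $Lip(\Psi) \leq \gamma_0$, which is exactly the conclusion.

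The main technical point is matching the various scales: the graph has size $e^{-4\delta n}$ while the Pesin chart provides good quadratic estimates only on a ball of size $\alpha_0 e^{-\delta n}$, and one must ensure the former is comfortably contained in the latter so that the nonlinear term contributes only an exponentially small $\gamma$. The spectral gap between $\chi_{s+l_1}$ and $\chi_{s+l_1+1}$ is what provides a uniform lower bound for $\xi$, allowing a single choice of large $n$ to make all the push-forwards along the orbit work at once.
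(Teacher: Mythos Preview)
Your proposal is correct and follows essentially the same route as the paper: write $g_{\widehat{f}^l(\widehat{x_i})}$ in block form via Proposition~\ref{prop1}, use Oseledets to get $\|A^{-1}\|^{-1}\geq e^{\chi_{s+l_1}-\delta}$ and $\|B\|\leq e^{\chi_{s+l_1+1}+\delta}$, control the nonlinear term by the tempered bound on $r_1$ to get $\gamma$ of order $e^{-3\delta n}/\alpha_0$, and apply the graph transform Theorem~\ref{graph}. The only cosmetic differences are that the paper takes $R_0=5e^{-4\delta n}$ (hence $\gamma=\tfrac{5}{\alpha_0}e^{-3\delta n}$), uses the slightly sharper $r_1(\widehat{f}^l(\widehat{x_i}))\geq\alpha_0 e^{-\delta l}$ rather than $\alpha_0 e^{-\delta n}$, and verifies the Lipschitz inequality $\frac{\|B\|\gamma_0+\gamma(1+\gamma_0)}{\|A^{-1}\|^{-1}-\gamma(1+\gamma_0)}\leq\gamma_0$ by direct computation rather than invoking the abstract threshold $\epsilon(\gamma_0,\xi)$.
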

 
\begin{proof}

We use the graph transform Theorem.

In the coordinates 

$$C_{\delta}^{-1}(\widehat{f}^l(\widehat{x_i}))E^{u}(\widehat{f}^l(\widehat{x})) \oplus C_{\delta}^{-1}(\widehat{f}^l(\widehat{x_i})) E^{s}(\widehat{f}^l(\widehat{x}))$$

we have

$$g_{\widehat{f}^l(\widehat{x_i})}(X,Y)=(AX+R(X,Y),BY+U(X,Y))$$

where $(A,B)=diag(A_{\delta}^1(\widehat{f}^l(\widehat{x_i})), \cdots , A_{\delta}^{p_0}(\widehat{f}^l(\widehat{x_i})))$, $\|B\| \leq \e^{\chi_{s+l_1+1} + \delta}$ and $\|A^{-1}\|^{-1} \geq \e^{\chi_{s+l_1} - \delta}$.

By using the Proposition \ref{prop1} we have

\begin{equation*}
\begin{split}
\max( \| DR(X,Y)\|, \|DU(X,Y)\|) &\leq \frac{1}{r_1(\widehat{f}^l(\widehat{x_i}))} \times \|(X,Y)\| \\
&\leq \frac{1}{\alpha_0 \e^{- \delta l}} 5 \e^{-4 \delta n} \leq \frac{5}{\alpha_0} \e^{-3 \delta n}\\
\end{split}
\end{equation*}

because $r_1$ is a tempered function and with $\|(X,Y)\| \leq R_0= 5 \e^{-4 \delta n}$.

Now, we verify the hypothesis of the graph transform Theorem. We have

$$\gamma \|A^{-1} \| (1+ \gamma_0) \leq \frac{5}{\alpha_0} \e^{-3 \delta n} \e^{- \chi_{s+l_1} + \delta} \times 2<1$$

$$ \frac{\|B\| \gamma_0 + \gamma(1+ \gamma_0)}{ \|A^{-1}\|^{-1} - \gamma(1+ \gamma_0)} \leq \frac{\e^{ \chi_{s+l_1+1} + \delta} \gamma_0 + \frac{5}{\alpha_0} \e^{-3 \delta n} \times 2}{\e^{ \chi_{s+l_1} - \delta}-\frac{5}{\alpha_0} \e^{-3 \delta n} \times 2} \leq \gamma_0$$

for $n$ high enough (take $\delta$ small enough to have $\e^{ \chi_{s+l_1+1} -\chi_{s+l_1}  + 2 \delta} < 1$).

By using the graph transform Theorem, the Lemma is proved.

\end{proof}

We begin with the graph $(X, \Phi_0(X))$ with $X \in B_{s}(0, \e^{-4 \delta n}) \times B_{l_1}(0, \e^{-8 \delta n}) \subset C_{\delta}^{-1}(\widehat{x_i})E_1^{u}(\widehat{x_i}) \oplus C_{\delta}^{-1}(\widehat{x_i}) E_2^{u}(\widehat{x_i})$ and $\Phi_0 \equiv 0$. By the previous Lemma, its image under $g_{\widehat{x_i}}$ is a graph $(X, \Phi_1(X))$ over a part of $C_{\delta}^{-1}(\widehat{f}(\widehat{x_i}))E^{u}(\widehat{f}(\widehat{x_i})))$ with $Lip(\Phi_1) \leq \gamma_0$ (and $\Phi_1(0)=0$).

We keep only the part over $B_{s+l_1}(0, \e^{-4 \delta n})$ (we do a cut-off) and we take now the image under $g_{\widehat{f}(\widehat{x_i})}$. We obtain a graph over a part of $C_{\delta}^{-1}(\widehat{f}^2(\widehat{x_i}))E^{u}(\widehat{f}^2(\widehat{x_i})))$, we do the cut-off and so on. At the end, wa have a graph $(X, \phi_n(X))$ over a part contained in $B_{s+l_1}(0, \e^{-4 \delta n}) \subset C_{\delta}^{-1}(\widehat{f}^n(\widehat{x_i}))E^{u}(\widehat{f}^n(\widehat{x_i}))$ with $Lip(\Phi_n) \leq \gamma_0$ (and $\Phi_n(0)=0$).

Denote by $W^0_n(f^n(x_i))$ these graphs. 

We want to considerate these graphs in the coordinate system 

$$C_{\delta}^{-1}(\widehat{f}^n(\widehat{x_{i_0}}))E^{u}(\widehat{f}^n(\widehat{x_{i_0}})) \oplus C_{\delta}^{-1}(\widehat{f}^n(\widehat{x_{i_0}})) E^{s}(\widehat{f}^n(\widehat{x_{i_0}}))$$.

It means to take the image by 

$$C=C_{\delta}^{-1}(\widehat{f}^n(\widehat{x_{i_0}})) \tau_{f^n(x_{i_0})}^{-1} \tau_{f^n(x_{i})} C_{\delta}(\widehat{f}^n(\widehat{x_{i}})).$$

We claim that this image is a graph $(X, \psi_n(X))$ over a part of $C_{\delta}^{-1}(\widehat{f}^n(\widehat{x_{i_0}}))E^{u}(\widehat{f}^n(\widehat{x_{i_0}}))$ with $Lip(\psi_n) \leq 2 \gamma_0$. To prove that, we follow \cite{DetNgu}:

We have $dist(f^n(x_i),f^n(x_{i_0})) \leq \e^{-8 \delta n} << \epsilon_1$, so $f^n(x_i)$ and $f^n(x_{i_0})$ are in the same chart. It implies that $\tau_{f^n(x_{i})}= \psi \circ t_1$ and $\tau_{f^n(x_{i_0})}= \psi \circ t_2$ where $t_1$ and $t_2$ are translations.

There exists thus a vector $a$ such that $\tau_{f^n(x_{i_0})}^{-1} \circ \tau_{f^n(x_{i})}(w)=w+a$ and we can write $C(w)=g_1(w) + C_{\delta}^{-1}(\widehat{f}^n(\widehat{x_{i_0}}))a$ where $g_1(w)=C_{\delta}^{-1}(\widehat{f}^n(\widehat{x_{i_0}})) C_{\delta}(\widehat{f}^n(\widehat{x_{i}}))w$.

We begin by taking the image of $(X, \Phi_n(X))$ by $g_1$. We have

$$g_1(w)=C_{\delta}^{-1}(\widehat{f}^n(\widehat{x_{i_0}})) C_{\delta}(\widehat{f}^n(\widehat{x_{i_0}}))w + C_{\delta}^{-1}(\widehat{f}^n(\widehat{x_{i_0}})) (C_{\delta}(\widehat{f}^n(\widehat{x_{i}}))- C_{\delta}(\widehat{f}^n(\widehat{x_{i_0}})))w$$

and

$$\| C_{\delta}^{-1}(\widehat{f}^n(\widehat{x_{i_0}})) (C_{\delta}(\widehat{f}^n(\widehat{x_{i}}))- C_{\delta}(\widehat{f}^n(\widehat{x_{i_0}}))) \| \leq \frac{1}{\alpha_0} \|C_{\delta}(\widehat{f}^n(\widehat{x_{i}}))- C_{\delta}(\widehat{f}^n(\widehat{x_{i_0}})) \|$$

(because $\widehat{f}^n(\widehat{x_{i_0}}) \in  \widehat{\Gamma_0}$).

But, $\widehat{x} \to C_{\delta}(\widehat{x})$ is uniformly continuous on the compact set $\widehat{\Gamma_0}$, so there exits a function $\epsilon(\eta)$ with $\displaystyle \lim_{\eta \to 0} \epsilon(\eta)=0$ and

$$\forall \mbox{  } \widehat{z}, \widehat{y} \in \widehat{\Gamma_0} \mbox{  ,  } dist(\widehat{z},\widehat{y}) \leq \eta \Rightarrow \frac{1}{\alpha_0} \| C_{\delta}(\widehat{z})- C_{\delta}(\widehat{y}) \| \leq \epsilon(\eta) .$$

This function depends only on $\widehat{\Gamma_0}$ and $C_{\delta}$. 

Suppose that $d(\widehat{f}^n(\widehat{x_{i}}), \widehat{f}^n(\widehat{x_{i_0}})) \leq \eta$. We can write $g_1(X,Y)=(AX+CY,BY+DX)$ with $\|A^{-1}\|^{-1} \geq 1- \epsilon(\eta)$, $\|C\| \leq \epsilon(\eta)$, $\|D\| \leq \epsilon(\eta)$ and $\|B\| \leq 1+ \epsilon(\eta)$.

If we take the notations of the graph transform Theorem, we obtain

$$\gamma \|A^{-1} \| (1+ \gamma_0) \leq \frac{\epsilon(\eta)}{1- \epsilon(\eta)} \times 2 <1$$

if $\eta$ is small enough and moreover

$$ \frac{ \|B\| \gamma_0 + \gamma(1+ \gamma_0)}{\|A^{-1}\|^{-1} - \gamma (1+\gamma_0)} \leq \frac{(1+ \epsilon(\eta)) \gamma_0 + 2 \epsilon(\eta) }{ (1- \epsilon(\eta)) - 2 \epsilon(\eta)} \leq 2 \gamma_0$$

if $\eta$ is small enough.

The graph transform Theorem implies that the image by $g_1$ of the graph $(X,\Phi_n(X))$ is a graph $(X, \Phi_n^1(X))$ with $Lip \Phi_n^1 \leq 2 \gamma_0$ and $\Phi_n^1(0)=0$.

It remains to translate this last graph by $C_{\delta}^{-1}(\widehat{f}^n(\widehat{x_{i_0}}))a$. If we write this vector $(a_1,a_2)$, the image of $(X, \Phi_n^1(X))$ by the translation is

$$(X+a_1, \Phi_n^1(X)+a_2)=(X', \Psi_n(X'))$$

with $X'=X+a_1$ and $ \Psi_n(X')= \Phi_n^1(X)+a_2= \Phi_n^1(X'-a_1)+a_2$. So, this is a graph over a part of $C_{\delta}^{-1}(\widehat{f}^n(\widehat{x_{i_0}}))E^{u}(\widehat{f}^n(\widehat{x_{i_0}}))$.

Moreover, we have $Lip \Psi_n = Lip \Phi_n^1 \leq 2 \gamma_0$ and

\begin{equation*}
\begin{split}
\| \Psi_n(0) \| &= \| \Phi_n^1(-a_1) + a_2 \| \leq  \| \Phi_n^1(-a_1) - \Phi_n^1(0)\| + \|a_2\| \\
&\leq 2 \gamma_0 \|a_1\| + \|a_2\| \leq (1+ 2 \gamma_0) \|a\|. \\
\end{split}
\end{equation*}

But $a=\overrightarrow{ \psi^{-1}(f^n(x_{i_0})) \psi^{-1}(f^n(x_{i}))}$, thus

$$\|a\|=\| \psi^{-1}(f^n(x_{i})) - \psi^{-1}(f^n(x_{i_0})) \| \leq C \e^{-4 \delta n}$$

with $C$ which depends only on $X$.

Finally it gives $\| \Psi_n(0) \| \leq 3C  \e^{-4 \delta n}$.

Now, we consider a cover of $\widehat{X}$ by balls of radius $\eta$. Since $\widehat{X}$ is a compact set, a finite number $N(\eta)$ is sufficient to cover $\widehat{X}$ (and $N(\eta)$ do not depends on $n$).

Up to changing $N'$ into $N''= \frac{N'}{N(\eta)}$, we can suppose that $\widehat{f}^n(x_i)$ and $\widehat{f}^n(x_{i_0})$ are in the same ball and so the hypothesis $d(\widehat{f}^n(\widehat{x_{i}}), \widehat{f}^n(\widehat{x_{i_0}})) \leq \eta$ that we have done is satisfied.

Denote by $W_n(f^n(x_i))$ ($i=1, \cdots , N''$) these graphs. The goal now is to bound by below their $2(s+l_1)$-volume by $\e^{2 \chi_{s+1} n + \cdots + 2 \chi_{s+l_1} n - 30 \delta n k}$.

\medskip

{\bf Step 2: Bound by below of the $2(s+l_1)$-volume of $W_n(f^n(x_i))$ ($i=1, \cdots , N''$)}

\medskip

We have $W^0_n(f^n(x_i))= C^{-1}(W_n(f^n(x_i)))$, thus by the coarea formula (see \cite{Fe} p.258) we obtain

\begin{equation*}
\begin{split}
vol_{2(s+l_1)}(W^0_n(f^n(x_i)))&= \int_{W_n(f^n(x_i))} \| \Lambda^{2(s+l_1)} D C^{-1} (z) \| d \mathcal{H}^{2(s+l_1)}(z)\\
& \leq C(\alpha_0) vol_{2(s+l_1)}(W_n(f^n(x_i)))
\end{split}
\end{equation*}

because $D C^{-1}(z)= C_{\delta}^{-1}(\widehat{f}^n(\widehat{x_{i}}))C_{\delta}(\widehat{f}^n(\widehat{x_{i_0}}))$.

So it is enough to bound by below the volume of $W^0_n(f^n(x_i))$.

For that, we use a foliation of $W^0_n(f^n(x_i))$ as in \cite{Det1}.

We start again with the first graph $(X, \Phi_0(X))$ over $B_{s}(0, \e^{-4 \delta n}) \times B_{l_1}(0, \e^{-8 \delta n})$ and we push-forward slices of it. We consider for that, $B_{s}(0, \e^{-4 \delta n}) \times \{a_{s+1} \} \times \cdots \times \{a_{s+l_1} \} \times \{0\}^{k-s-l_1}$ with $B_{s}(0, \e^{-4 \delta n}) \subset C_{\delta}^{-1}(\widehat{x_i})E_1^{u}(\widehat{x_i})$ and $(a_{s+1}, \cdots , a_{s+l_1}) \in B_{l_1}(0, \e^{-8 \delta n})$.

This set is a graph $(X', \xi_0(X'))$ over a part of $C_{\delta}^{-1}(\widehat{x_i})E_1^{u}(\widehat{x_i})$ 

(with $\xi_0 \equiv (a_{s+1}, \cdots , a_{s+l_1},0, \cdots , 0)$).

As previously, the image of this graph by $g_{\widehat{x_i}}$ is a graph $(X', \xi_1(X'))$ over a part of $C_{\delta}^{-1}(\widehat{f}(\widehat{x_i}))E_1^{u}(\widehat{f}(\widehat{x_i})))$ with $Lip(\xi_1) \leq \gamma_0$. Moreover we have:

\begin{Lem}{\label{lem2}}
The graph $(X', \xi_1(X'))$ is a graph at least over $B_s(0, \e^{-4 \delta n})$ and $\| \xi_1(0) \| \leq \e^{-8 \delta n +2 \delta}$.
\end{Lem}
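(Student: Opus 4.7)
The plan is to invoke the graph transform Theorem \ref{graph} applied to $g_{\widehat{x_i}}$, but with a \emph{finer} splitting than the one used in the previous Lemma: namely, split
$$C_\delta^{-1}(\widehat{x_i})E_1^u(\widehat{x_i}) \oplus C_\delta^{-1}(\widehat{x_i})\bigl(E_2^u(\widehat{x_i}) \oplus E^s(\widehat{x_i})\bigr),$$
so that the block $A$ in the statement of Theorem \ref{graph} corresponds to the $s$ strictly positive Lyapunov exponents. In this splitting $\|A^{-1}\|^{-1} \geq \e^{\chi_s - \delta}$ and $\|B\| \leq \e^{\chi_{s+1} + \delta} \leq \e^{\delta}$ since $\chi_{s+1} \leq 0$. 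The initial slice $\xi_0$ is a constant graph with $\alpha := \e^{-4\delta n}$ (its domain $B_s(0, \e^{-4\delta n})$) and $\beta := \|\xi_0(0)\| = \max_{s+1 \leq j \leq s+l_1}|a_j| \leq \e^{-8\delta n}$. The nonlinear estimate $\max(\|DR\|, \|DU\|) \leq \|w\|/r_1(\widehat{x_i}) \leq \e^{-4\delta n}/\alpha_0$ on the relevant ball, together with the choice $\delta < \chi_s$, makes the hypotheses $\gamma\|A^{-1}\|(1+\gamma_0) < 1$ and the Lipschitz contraction estimate hold verbatim as in the previous Lemma (so indeed $\mathrm{Lip}(\xi_1) \leq \gamma_0$).

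For the domain bound, the ``moreover'' clause of Theorem \ref{graph} says the projection $\pi_0(g_{\widehat{x_i}}(\mbox{graph }\xi_0))$ contains $B_s(0, (\|A^{-1}\|^{-1} - \gamma(1+\gamma_0))\alpha - \gamma\beta)$. With $\|A^{-1}\|^{-1} \geq \e^{\chi_s - \delta} > 1$, the correction $\gamma(1+\gamma_0) = O(\e^{-3\delta n})$ and $\gamma\beta = O(\e^{-11\delta n})$ are both negligible relative to $\alpha$, so for $n$ large the domain contains $B_s(0, \e^{-4\delta n})$, giving the first assertion. For the offset bound, the same clause yields
$$\|\xi_1(0)\| \leq (1+\gamma_0)\bigl(\|B\|\beta + \gamma\beta + \|D^2 g_{\widehat{x_i}}\|\,\beta^2\bigr).$$
The three terms are bounded by $\e^{\delta - 8\delta n}$, $(5/\alpha_0)\e^{-11\delta n}$ and $(1/\alpha_0)\e^{-16\delta n}$ respectively, so the first dominates for $n$ large. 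Choosing $\gamma_0$ small enough that $(1+\gamma_0) \leq \e^{\delta/2}$ (and absorbing the subdominant terms into another $\e^{\delta/2}$ factor), one obtains $\|\xi_1(0)\| \leq \e^{-8\delta n + 2\delta}$, as required.

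The main obstacle is conceptual rather than computational: one must recognize that the previous Lemma's splitting at the zero exponent (which separated $E^u$ from $E^s$) is insufficient here, because the $l_1$ non-strictly-positive directions inside $E^u$ would not give any expansion and the domain would collapse. Splitting at $\chi_s$ instead places those $l_1$ directions on the contracting side, so that $\|B\| \leq \e^{\delta}$ keeps the transverse coordinate of the slice from drifting faster than $\e^{-8\delta n}$, while the strictly positive block $A$ preserves the domain radius $\e^{-4\delta n}$. Once this choice is made, everything else is a routine re-application of Theorem \ref{graph} with the same nonlinear bounds already verified in the preceding Lemma.
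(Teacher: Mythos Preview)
Your proof is correct and follows essentially the same route as the paper: apply Theorem~\ref{graph} with the splitting $E_1^u$ versus $E_2^u \oplus E^s$, take $\alpha = \e^{-4\delta n}$, $\beta = \e^{-8\delta n}$, use $\|A^{-1}\|^{-1} \geq \e^{\chi_s - \delta}$ and $\|B\| \leq \e^{\delta}$, and read off the domain and offset bounds from the ``moreover'' clause (the paper also takes $1+\gamma_0 \leq \e^{\delta/2}$ at the end). Your closing paragraph correctly identifies the conceptual point --- that one must shift the splitting from $E^u/E^s$ to $E_1^u/(E_2^u\oplus E^s)$ so that the expanding block genuinely expands --- which the paper leaves implicit.
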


\begin{proof}

We consider the graph transform Theorem (Theorem \ref{graph}) with $\alpha=\e^{-4 \delta n}$ and $\beta = \e^{-8 \delta n}$.

The projection of the graph $\xi_1$ on $C_{\delta}^{-1}(\widehat{f}(\widehat{x_i}))E_1^{u}(\widehat{f}(\widehat{x_i})))$ contains $B(0, (\|A^{-1}\|^{-1} - \gamma(1+ \gamma_0)) \alpha - \gamma \beta)$ and

$$(\|A^{-1}\|^{-1} - \gamma(1+ \gamma_0)) \alpha - \gamma \beta \geq (\e^{\chi_s - \delta} -2 \times \frac{5}{\alpha_0} \e^{-3 \delta n})\e^{-4 \delta n} - \frac{5}{\alpha_0} \e^{-3 \delta n} \e^{-8 \delta n} \geq \e^{-4 \delta n}.$$

Moreover,

\begin{equation*}
\begin{split}
\| \xi_1(0) \| &\leq (1+ \gamma_0)( \| B\| \beta + \gamma \beta + \| D^2 g_{\widehat{x_i}} \|_{B(0, R_0)} \beta^2) \\
& \leq \beta (1 + \gamma_0) ( \e^{\delta} + \frac{5}{\alpha_0} \e^{-3 \delta n} + \| D^2 g_{\widehat{x_i}} \|_{B(0, R_0)} \beta).\\
\end{split}
\end{equation*}

Now, we saw that $\| D^2 g_{\widehat{x_i}} \|_{B(0, R_0)} \leq \frac{1}{R_0}=\frac{\e^{4 \delta n}}{5}$ and $1+ \gamma_0 \leq \e^{\delta/2}$ if we take $\gamma_0$ small enough with respect to $\delta$. Then, we obtain $\| \xi_1(0) \| \leq \beta \e^{2 \delta}$.

\end{proof}

From the graph $(X', \xi_1(X'))$, we keep only the part over $B_s(0, \e^{-4 \delta n})$. Notice, that this cut-off is the same than the previous one. Namely, if we take $X' \in B_s(0, \e^{-4 \delta n})$, we have

$$\| \xi_1(X') \| \leq \| \xi_1(X')- \xi_1(0) \| + \| \xi_1(0) \| \leq \gamma_0 \| X' \| + \e^{-8 \delta n + 2 \delta} \leq e^{-4 \delta n}.$$

The projection of $(X', \xi_1(X'))$ on $C_{\delta}^{-1}(\widehat{f}(\widehat{x_i}))E^{u}(\widehat{f}(\widehat{x_i})))$ is well in $B_{s+l_1}(0, \e^{-4 \delta n})$.

We do again, what we have done, with $g_{\widehat{f}(\widehat{x_i})}$ instead of $g_{\widehat{x_i}}$, and so on.

At the end, we obtain a graph $(X', \xi_n(X'))$ over $B_s(0, \e^{-4 \delta n}) \subset C_{\delta}^{-1}(\widehat{f}^n(\widehat{x_i}))E_1^{u}(\widehat{f}^n(\widehat{x_i})))$ with $Lip \xi_n \leq \gamma_0$ and $\| \xi_n(0) \| \leq \e^{-8 \delta n + 2 \delta n}= \e^{-6 \delta n}$.

By considering all $(a_{s+1}, \cdots , a_{s+l_1}) \in B_{l_1}(0, \e^{-8 \delta n})$, we have a foliation of $W^0_n(f^n(x_i))$.

Now, in the coordinate system

$$C_{\delta}^{-1}(\widehat{f}^n(\widehat{x_{i}}))E_1^{u}(\widehat{f}^n(\widehat{x_{i}})) \oplus C_{\delta}^{-1}(\widehat{f}^n(\widehat{x_{i}}))E_2^{u}(\widehat{f}^n(\widehat{x_{i}})) \oplus C_{\delta}^{-1}(\widehat{f}^n(\widehat{x_{i}})) E^{s}(\widehat{f}^n(\widehat{x_{i}}))$$

we consider the complex planes of dimension $k-s$, given by the equations $x_1=b_1 , \cdots , x_s= b_s$ with $(b_1, \cdots , b_s) \in B_s(0 , \e^{-4 \delta n})$. The intersections $I_0=I_0(b_1, \cdots , b_s)$ of these planes with 
$W^0_n(f^n(x_i))$ have complex dimension $l_1$. We prove below that the $2 l_1$-volume of $I_0$ is bound by below by $\e^{2 \chi_{s+1} n + \cdots + 2 \chi_{s+l_1} n - 20 \delta n k}$ and with the coarea formula (see \cite{Fe} p.258), we have then

$$vol_{2(s+l_1)}(W^0_n(f^n(x_i))) \geq \int_{B_s(0 , \e^{-4 \delta n})} \int_{\pi_1^{-1}(z) \cap W^0_n(f^n(x_i))} d \mathcal{H}^{2l_1} d \mathcal{H}^{2s}(z)$$

where $\pi_1$ is the orthogonal projection on $C_{\delta}^{-1}(\widehat{f}^n(\widehat{x_{i}}))E_1^{u}(\widehat{f}^n(\widehat{x_{i}}))$, and so

\begin{equation*}
\begin{split}
vol_{2(s+l_1)}(W^0_n(f^n(x_i))) & \geq \e^{-4 \delta n \times 2s} \e^{2 \chi_{s+1} n + \cdots + 2 \chi_{s+l_1} n - 20 \delta n k} \\
& \geq \e^{2 \chi_{s+1} n + \cdots + 2 \chi_{s+l_1} n - 28 \delta n k}\\
\end{split}
\end{equation*}

which is the lower bound that we wanted.

We have to prove now the Lemma

\begin{Lem}

The $2 l_1$-volume of $I_0$ is bound by below by $\e^{2 \chi_{s+1} n + \cdots + 2 \chi_{s+l_1} n - 20 \delta n k}$.

\end{Lem}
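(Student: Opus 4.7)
The plan is to parametrize $I_0 = I_0(b_1, \ldots, b_s)$ via the foliation of $W^0_n(f^n(x_i))$ built in Step~1, and then estimate the Jacobian of this parametrization using the Lyapounov exponents on $E_2^u$.

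First I would observe that for each $a = (a_{s+1}, \ldots, a_{s+l_1}) \in B_{l_1}(0, e^{-8\delta n})$, the leaf $L_a^n$ obtained by pushing forward the initial slice $B_s(0, e^{-4\delta n}) \times \{a\} \times \{0\}^{k-s-l_1}$ (with cut-offs at each step, exactly as in Lemma \ref{lem2}) is a graph $(X', \xi_n^a(X'))$ over $B_s(0, e^{-4\delta n}) \subset C_\delta^{-1}(\widehat{f}^n(\widehat{x_i})) E_1^u(\widehat{f}^n(\widehat{x_i}))$, with $Lip(\xi_n^a) \leq \gamma_0$. Hence for our fixed $b = (b_1, \ldots, b_s) \in B_s(0, e^{-4\delta n})$, the leaf $L_a^n$ meets the complex plane $\{X_1 = b\}$ in a single point, and as $a$ varies these points sweep out $I_0$. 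Denoting by $\eta_n^a(b)$ the $E_2^u(\widehat{f}^n(\widehat{x_i}))$-component of this intersection, $I_0$ is parametrized by $a \mapsto (b, \eta_n^a(b), \Phi_n(b, \eta_n^a(b)))$.

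Next, since $I_0$ is contained in the graph $W^0_n$ of $\Phi_n$ with $Lip(\Phi_n) \leq \gamma_0$, the orthogonal projection onto $E_2^u(\widehat{f}^n(\widehat{x_i}))$ is bi-Lipschitz on $I_0$, so it is enough to bound below the $2l_1$-volume of the image of $a \mapsto \eta_n^a(b)$. At time $0$ this map is $a \mapsto a$. To track its evolution through the successive graph transforms $g_{\widehat{f}^l(\widehat{x_i})}$, I would use that each such map is, in the Oseledets coordinates, block diagonal $A_\delta(\widehat{f}^l(\widehat{x_i})) = \mathrm{diag}(A_\delta^{E_1^u}, A_\delta^{E_2^u}, A_\delta^{E^s})$ plus a nonlinear perturbation of size $O(e^{-3\delta n}/\alpha_0)$ on balls of radius $\leq e^{-4\delta n}$, together with $Lip(\xi_l^a) \leq \gamma_0$. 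An induction then shows that $D_a \eta_n^{\cdot}(b)$ equals $A_n^{E_2^u} := \prod_{l=0}^{n-1} A_\delta^{E_2^u}(\widehat{f}^l(\widehat{x_i}))$ up to a multiplicative factor $1 + O(\gamma_0)$. By Theorem \ref{oseledets}, the real Jacobian $|\det_{\Rr} A_n^{E_2^u}|$ is at least $e^{2(\chi_{s+1} + \cdots + \chi_{s+l_1})n - 2 l_1 \delta n}$, so multiplying by the $2l_1$-volume of $B_{l_1}(0, e^{-8\delta n}) \geq c_0 \, e^{-16 \delta n l_1}$ and absorbing all $O(\delta n)$ losses into the slack yields the desired lower bound $e^{2\chi_{s+1} n + \cdots + 2 \chi_{s+l_1} n - 20 \delta n k}$.

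The main obstacle is the inductive control of the error in identifying $a \mapsto \eta_n^a(b)$ with the linear cocycle $A_n^{E_2^u}$: at each step the leaves $L_a^l$ are only close to, not exactly, parallel affine subspaces in the $E_2^u$-direction, and the off-diagonal perturbations $R, U$ from Proposition \ref{prop1} introduce small tilts. Closing the induction requires the ingredients already powering the preceding Lemma: the bounds $\|DR\|, \|DU\| = O(e^{-3\delta n}/\alpha_0)$ on balls of radius $\leq e^{-4\delta n}$, the smallness of $\gamma_0$ relative to the hyperbolicity gap $\chi_{s+l_1} - \chi_{s+l_1+1}$, and the tempered behavior of $r_1(\widehat{f}^l(\widehat{x_i}))$.
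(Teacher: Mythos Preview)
Your approach is different from the paper's, and while the geometric idea is sound, the inductive step as you phrase it has a genuine gap.

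\textbf{What the paper does.} The paper works \emph{backward}: it pulls $I_0$ back by $g_{\widehat{f}^{n-1}(\widehat{x_i})}^{-1}, \ldots, g_{\widehat{x_i}}^{-1}$ and uses the coarea formula at each step. The tangent space to the successive pullbacks stays $\gamma_0$-close to $E^u$ (because these pullbacks are contained in the intermediate graphs $(X,\Phi_l(X))$ with $Lip\,\Phi_l\le\gamma_0$), so $\|\Lambda^{l_1} Dg_l^{-1}(z)\|$ is estimated from above by $\|\Lambda^{l_1} Dg_l^{-1}(0)_{|E^u}\|+\epsilon(\gamma_0)\le \e^{-\chi_{s+1}-\cdots-\chi_{s+l_1}+\delta l_1}(1+2\epsilon(\gamma_0))$. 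Iterating gives
\[
vol_{2l_1}\bigl(g_{\widehat{x_i}}^{-1}\circ\cdots\circ g_{\widehat{f}^{n-1}(\widehat{x_i})}^{-1}(I_0)\bigr)\le \e^{-2\chi_{s+1}n-\cdots-2\chi_{s+l_1}n+2\delta l_1 n}(1+2\epsilon(\gamma_0))^{2n}\,vol_{2l_1}(I_0),
\]
and the left side is at least $\e^{-16\delta l_1 n}$ because the full pullback meets every leaf $B_s(0,\e^{-4\delta n})\times\{a\}\times\{0\}$. Choosing $\gamma_0$ so that $1+2\epsilon(\gamma_0)\le \e^{\delta}$ gives the bound.

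\textbf{The gap in your induction.} You want to track $a\mapsto \eta_n^a(b)$ through the steps $l=0,\ldots,n$, but this quantity is only defined at the \emph{final} time: $b$ is a coordinate in $C_\delta^{-1}(\widehat{f}^n(\widehat{x_i}))E_1^u(\widehat{f}^n(\widehat{x_i}))$. To make an honest induction you have to follow the orbit $q_l^a\in L_a^l$ with $q_n^a=(b,\eta_n^a(b),\Phi_n(b,\eta_n^a(b)))$, and then $\partial_a q_{l+1}^a = Dg_{\widehat{f}^l(\widehat{x_i})}(q_l^a)\,\partial_a q_l^a$. The point is that the $E_1^u$-component of $q_l^a$ \emph{depends on $a$} (since the nonlinear terms $R,U$ mix coordinates), so $\partial_a q_l^a$ has a nontrivial $E_1^u$-part at every intermediate step and you cannot simply compose the $E_2^u$-blocks. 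Resolving this requires either an implicit-function argument exploiting the strong expansion in $E_1^u$ (to show $\partial_a X_1^{(0)}$ is negligibly small), or a cone-field argument controlling all off-diagonal pieces of $D(g_{n-1}\circ\cdots\circ g_0)$. Either way the bookkeeping is substantially heavier than what you wrote, and the error is not a single factor $1+O(\gamma_0)$ but a compounded $(1+O(\gamma_0))^n$ that must then be absorbed by choosing $\gamma_0\ll\delta$.

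The backward/coarea route avoids all of this: at each step one only needs a pointwise upper bound on a $\Lambda^{l_1}$-norm restricted to vectors $\gamma_0$-close to $E^u$, and the lower bound on the final pullback volume is the purely topological fact that it meets every leaf of the initial foliation.
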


\begin{proof}

We have

$$vol_{2l_1} g_{\widehat{f}^{n-1}(\widehat{x_{i}})}^{-1}(I_0) = \int_{I_0} \| \Lambda^{l_1} D g_{\widehat{f}^{n-1}(\widehat{x_{i}})}^{-1}(z) \|^2 d \mathcal{H}^{2 l_1}(z)$$

by using the coarea formule. Here, we consider $g_{\widehat{f}^{n-1}(\widehat{x_{i}})}^{-1}$ as an application on $I_0$ and $D g_{\widehat{f}^{n-1}(\widehat{x_{i}})}^{-1}(z)$ is the complex differential.

We need a bound by above of $\| \Lambda^{l_1} D g_{\widehat{f}^{n-1}(\widehat{x_{i}})}^{-1}(z) \|$.

First, this quantity is smaller than $\| \Lambda^{l_1} D g_{\widehat{f}^{n-1}(\widehat{x_{i}})}^{-1}(z) \|$ where $g_{\widehat{f}^{n-1}(\widehat{x_{i}})}^{-1}$ is considered on all $W^0_n(f^n(x_i))$. We have

$$\| \Lambda^{l_1} D g_{\widehat{f}^{n-1}(\widehat{x_{i}})}^{-1}(z) \| = \| D g_{\widehat{f}^{n-1}(\widehat{x_{i}})}^{-1}(z)v_1 \wedge \cdots \wedge  D g_{\widehat{f}^{n-1}(\widehat{x_{i}})}^{-1}(z)v_{l_1} \|$$

for some $v_1, \cdots , v_{l_1}$ tangent to $W^0_n(f^n(x_i))$.

Let $u_1, \cdots , u_{l_1}$ be the projections of $v_1, \cdots , v_{l_1}$ on $C_{\delta}^{-1}(\widehat{f}^n(\widehat{x_{i}}))E^{u}(\widehat{f}^n(\widehat{x_{i}}))$. For $j=1, \cdots ,l_1$, we can write $v_j=( \alpha_j, D \Phi_n(p) \alpha_j)$ where $p$ is the orthogonal projection of $z$ on $C_{\delta}^{-1}(\widehat{f}^n(\widehat{x_{i}}))E^{u}(\widehat{f}^n(\widehat{x_{i}}))$ and $u_j=( \alpha_j , 0)$.

We have $\| v_1 \wedge \cdots \wedge v_{l_1} - u_1 \wedge \cdots \wedge u_{l_1} \| = \| ( \Lambda^{l_1} G - \Lambda^{l_1} \mathcal{I})(u_1 \wedge \cdots \wedge u_{l_1}) \|$ where 

$$G(X,Y)=(X, D \Phi_n(p)(X)) =
\left(
\begin{array}{cc}
I & 0\\
D \Phi_n(p) & 0
\end{array}
\right)
\left(
\begin{array}{c}
X\\
Y
\end{array}
\right)
$$

and $\mathcal{I}= 
\left(
\begin{array}{cc}
I & 0\\
0 & 0
\end{array}
\right)$

($I$ is the matrix identity of $\Cc^{s+ l_1}$).

But, we have $\| D \Phi_n(p) \alpha_j \| \leq \gamma_0 \| \alpha_j\|$ (because $Lip \Phi_n \leq \gamma_0$) so $\| v_1 \wedge \cdots \wedge v_{l_1} - u_1 \wedge \cdots \wedge u_{l_1} \| $ is as small as we want, if we take $\gamma_0$ small.

Now,

\begin{equation*}
\begin{split}
\| \Lambda^{l_1} D g_{\widehat{f}^{n-1}(\widehat{x_{i}})}^{-1}(z) \| & = \| D g_{\widehat{f}^{n-1}(\widehat{x_{i}})}^{-1}(z)v_1 \wedge \cdots \wedge  D g_{\widehat{f}^{n-1}(\widehat{x_{i}})}^{-1}(z)v_{l_1} \| \\
& \leq \| \Lambda^{l_1} D g_{\widehat{f}^{n-1}(\widehat{x_{i}})}^{-1}(0)( u_1 \wedge \cdots \wedge u_{l_1}) \| + A+ B\\
\end{split}
\end{equation*}

with

$$A= \| ( \Lambda^{l_1} D g_{\widehat{f}^{n-1}(\widehat{x_{i}})}^{-1}(z) - \Lambda^{l_1} D g_{\widehat{f}^{n-1}(\widehat{x_{i}})}^{-1}(0))(v_1 \wedge \cdots \wedge v_{l_1} )\|$$

$$B= \| \Lambda^{l_1} D g_{\widehat{f}^{n-1}(\widehat{x_{i}})}^{-1}(0)(v_1 \wedge \cdots \wedge v_{l_1} - u_1 \wedge \cdots \wedge u_{l_1}) \|.$$

$A$ is as small as we want if we take $n$ high enough because by using the Proposition \ref{prop1}, we have $\| D^2 g_{\widehat{f}^{n-1}(\widehat{x_{i}})}^{-1}\| \leq \frac{\e^{\delta}}{\alpha_0}$ and $z \in B_{k}(0, (1+\gamma_0)\e^{-4 \delta n})$.

$B$ is smaller than $\epsilon(\gamma_0)$ with $\displaystyle \lim_{\gamma_0 \to 0} \epsilon(\gamma_0)=0$, so for $n$ high enough

\begin{equation*}
\begin{split}
\| \Lambda^{l_1} D g_{\widehat{f}^{n-1}(\widehat{x_{i}})}^{-1}(z) \| &\leq \| \Lambda^{l_1} D g_{\widehat{f}^{n-1}(\widehat{x_{i}})}^{-1}(0)_{|C_{\delta}^{-1}(\widehat{f}^n(\widehat{x_{i}}))E^{u}(\widehat{f}^n(\widehat{x_{i}}))}   \| + 2 \epsilon(\gamma_0)\\
& \leq \e^{- \chi_{s+1} - \cdots - \chi_{s+l_1} + \delta l_1} + 2 \epsilon(\gamma_0)\\
& \leq \e^{- \chi_{s+1} - \cdots - \chi_{s+l_1} + \delta l_1}(1+2\epsilon(\gamma_0)).
\end{split}
\end{equation*}

The $2l_1$-volume of $g_{\widehat{f}^{n-1}(\widehat{x_{i}})}^{-1}(I_0)$ is then bounded above by $\e^{- 2\chi_{s+1} - \cdots - 2\chi_{s+l_1} + 2 \delta l_1}(1+2 \epsilon(\gamma_0))^2 \times vol_{2l_1}(I_0)$.

Now, we take the image by $g_{\widehat{f}^{n-2}(\widehat{x_{i}})}^{-1}$ and we do the same thing and so on. At the end, we obtain that

$$vol_{2l_1} (g_{\widehat{x_{i}}}^{-1} \circ \cdots \circ  g_{\widehat{f}^{n-1}(\widehat{x_{i}})}^{-1}(I_0)) \leq \e^{- 2\chi_{s+1}n - \cdots - 2\chi_{s+l_1}n + 2n\delta l_1}(1+2 \epsilon(\gamma_0))^{2n} \times vol_{2l_1}(I_0).$$

But the intersection between $g_{\widehat{x_{i}}}^{-1} \circ \cdots \circ  g_{\widehat{f}^{n-1}(\widehat{x_{i}})}^{-1}(I_0)$ and $B_s(0, \e^{-4 \delta n}) \times \{a_{s+1}\} \times \cdots \times \{ a_{s+l_1} \} \times \{0\}^{k-s-l_1}$ is non empty for $(a_{s+1}, \cdots , a_{s+l_1}) \in B_{l_1}(0, \e^{- 8 \delta n})$, so its volume is higher than $\e^{- 8 \delta n \times 2 l_1}$. Then,

\begin{equation*}
\begin{split}
vol_{2l_1}(I_0) & \geq \e^{ 2\chi_{s+1}n + \cdots + 2\chi_{s+l_1}n - 2n \delta l_1}   \frac{\e^{- 8 \delta n \times 2 l_1}}{(1+2 \epsilon(\gamma_0))^{2n}} \\
& \geq \e^{ 2\chi_{s+1}n + \cdots + 2\chi_{s+l_1}n - 20 \delta kn}
\end{split}
\end{equation*}

for $\gamma_0$ small enough, so that $1+2 \epsilon(\gamma_0) \leq \e^{\delta}$.

This is the bound by below that we wanted: the $2(s+l_1)$-volume of the $W_n(f^n(x_i))$ are higher than $\e^{ 2\chi_{s+1}n + \cdots + 2\chi_{s+l_1}n - 30 \delta kn}$ (for $i=1, \cdots , N''$).

\end{proof}

The $2(s+l_1)$-volume of the projection $P_n(f^n(x_i))$ of $W_n(f^n(x_i))$ on $C_{\delta}^{-1}(\widehat{f}^n(\widehat{x_{i_0}}))E^{u}(\widehat{f}^n(\widehat{x_{i_0}}))$ is larger than $\e^{ 2\chi_{s+1}n + \cdots + 2\chi_{s+l_1}n - 31 \delta kn}$ because $W_n(f^n(x_i))$ is a graph $(X, \Psi_n(X))$ with $Lip \Psi_n \leq 2 \gamma_0$. The union of the $P_n(f^n(x_i))$ (for $i=1, \cdots , N''$) has a $2(s+l_1)$-volume (counted with multiplicity) bigger than

$$N'' \times \e^{ 2\chi_{s+1}n + \cdots + 2\chi_{s+l_1}n - 31 \delta kn} \geq \frac{1}{5N(\eta)} \e^{ h_{\mu}(f)n - \epsilon n} \e^{-16k \delta n} \e^{ 2\chi_{s+1}n + \cdots + 2\chi_{s+l_1}n - 31 \delta kn},$$

so we can find a point $p$ which is at least in

$$\frac{1}{5N(\eta)} \e^{ h_{\mu}(f)n - \epsilon n} \e^{-16k \delta n} \e^{ 2\chi_{s+1}n + \cdots + 2\chi_{s+l_1}n - 31 \delta kn} \epsilon_0^{-2k}$$

$P_n(f^n(x_i))$ (because these points are in $B_{s+l_1}(0, \epsilon_0)$).

Consider $\Delta'= \pi_2^{-1}(p)$, with $\pi_2$ the orthogonal projection on $C_{\delta}^{-1}(\widehat{f}^n(\widehat{x_{i_0}}))E^{u}(\widehat{f}^n(\widehat{x_{i_0}}))$, and $\Delta= \tau_{f^n(x_{i_0})} C_{\delta}(\widehat{f}^n(\widehat{x_{i_0}}))( \Delta')$. This is the $\Delta$ that we wanted. We pull-back now $\Delta$ by $f^n$ and we construct graphs in $f^{-n}(\Delta)$.

\subsubsection{\bf Construction of the graphs in $f^{-n}(\Delta)$}

By definition, $\Delta'$ cuts 

$$N_1 \geq \frac{1}{5N(\eta)} \e^{ h_{\mu}(f)n - \epsilon n}  \e^{ 2\chi_{s+1}n + \cdots + 2\chi_{s+l_1}n - 48 \delta kn} \geq \e^{ h_{\mu}(f)n - \epsilon n}  \e^{ 2\chi_{s+1}n + \cdots + 2\chi_{s+l_1}n - 50 \delta kn}$$

set $W_n(f^n(x_i))$ (for $n$ high enough).

To simplify the notations, denote by $1, \cdots , N_1$ the concerned indices $i$.

Fix $i \in \{1, \cdots , N_1 \}$. We tranfer $\Delta'$ in the coordinate system 

$$C_{\delta}^{-1}(\widehat{f}^n(\widehat{x_{i}}))E^{u}(\widehat{f}^n(\widehat{x_{i}})) \oplus C_{\delta}^{-1}(\widehat{f}^n(\widehat{x_{i}}))E^{s}(\widehat{f}^n(\widehat{x_{i}}))$$

which means that we consider $C'(\Delta')$ with $C'= C_{\delta}^{-1}(\widehat{f}^n(\widehat{x_{i}})) \tau^{-1}_{f^n(x_{i})} \tau_{f^n(x_{i_0})} C_{\delta}(\widehat{f}^n(\widehat{x_{i_0}}))$. As we proved previously, $C'(\Delta')$ is a graph $(\psi_n(Y),Y)$ at least over $B_{k-s-l_1}(0, \e^{-4 \delta n}) \subset C_{\delta}^{-1}(\widehat{f}^n(\widehat{x_{i}}))E^{s}(\widehat{f}^n(\widehat{x_{i}}))$ with $Lip \psi_n \leq 2 \gamma_0$.

We have $\Delta' \cap W_n(f^n(x_i)) \neq \emptyset$, so take $a \in \Delta' \cap W_n(f^n(x_i))$. The point $C'(a)$ is in $C'(\Delta') \cap W^0_n(f^n(x_i))$, then we can write $C'(a)=( \alpha, \Phi_n(\alpha))$ with $\alpha \in B_{s+l_1}(0, \e^{-4 \delta n})$. The fact that $\Phi_n(0)=0$ and $Lip \Phi_n \leq \gamma_0$ gives

$$\| \Phi_n(\alpha)\| \leq \gamma_0 \| \alpha\| \leq \gamma_0 \e^{-4 \delta n},$$

so the projection of $C'(a)$ on $C_{\delta}^{-1}(\widehat{f}^n(\widehat{x_{i}}))E^{s}(\widehat{f}^n(\widehat{x_{i}}))$ is inside $B_{k-s-l_1}(0, \e^{-4 \delta n})$. In particular, we can write $C'(a)= (\psi_n(\beta), \beta)$ with $\beta \in B_{k-s-l_1}(0, \e^{-4 \delta n})$ and so

\begin{equation*}
\begin{split}
\| \psi_n(0) \| &\leq \| \psi_n(0) - \psi_n(\beta) \|+ \| \psi_n(\beta) \| \leq 2 \gamma_0 \times \| \beta \| + \| \alpha \| \\
& \leq (1+ 2 \gamma_0) \e^{-4 \delta n} \leq 2 \e^{-4 \delta n}.
\end{split}
\end{equation*}

Now, we take the pull-back of this graph $(\psi_n(Y),Y)$. We need the following Lemma

\begin{Lem}{\label{lemme3}}

Fix $l=0, \cdots , n-1$. In the coordinate system

$$C_{\delta}^{-1}(\widehat{f}^{l+1}(\widehat{x_{i}}))E^{u}(\widehat{f}^{l+1}(\widehat{x_{i}})) \oplus C_{\delta}^{-1}(\widehat{f}^{l+1}(\widehat{x_{i}}))E^{s}(\widehat{f}^{l+1}(\widehat{x_{i}})),$$

we consider the graph $(\psi_{l+1}(Y),Y)$ over $B_{k-s-l_1}(0, \e^{-4 \delta n})$ with $Lip \psi_{l+1} \leq 2 \gamma_0$ and $\| \psi_{l+1} (0) \| \leq 2 \e^{-4 \delta n}$. Then the image of this graph by $g_{\widehat{f}^{l}(\widehat{x_{i}})}^{-1}$ is a graph $(\psi_{l}(Y),Y)$ at least over $B_{k-s-l_1}(0, \e^{-4 \delta n})$ with $Lip \psi_{l} \leq 2 \gamma_0$.

\end{Lem}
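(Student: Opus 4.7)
\textbf{Proof proposal for Lemma \ref{lemme3}.} The plan is to view the pull-back as a graph transform for $g^{-1}_{\widehat{f}^l(\widehat{x_i})}$, with the roles of the stable and unstable directions exchanged: since the graph is written $(\psi_{l+1}(Y),Y)$ with the free variable $Y$ lying in the stable subspace $C_\delta^{-1}(\widehat{f}^{l+1}(\widehat{x_i}))E^s$, this direction plays the role of the ``expanding'' coordinate $X$ in Theorem \ref{graph}, while the unstable coordinate $X$ plays the role of the contracted fiber $Y$.

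By Proposition \ref{prop2}, in the Oseledets coordinates at $\widehat{f}^l(\widehat{x_i})$ we have
\[
g^{-1}_{\widehat{f}^l(\widehat{x_i})}(X,Y) = \bigl((A^u_\delta)^{-1} X + R(X,Y),\ (A^s_\delta)^{-1} Y + U(X,Y)\bigr),
\]
with $\|(A^u_\delta)^{-1}\| \leq e^{-\chi_{s+l_1}+\delta}$, $\|A^s_\delta\| \leq e^{\chi_{s+l_1+1}+\delta}$, and $\max(\|DR\|,\|DU\|) \leq \|w\|/r_2(\widehat{f}^l(\widehat{x_i}))$. Using the temperedness $r_2(\widehat{f}^l(\widehat{x_i})) \geq \alpha_0 \max(e^{-\delta l}, e^{-\delta(n-l)})$, obtained by going forward from $\widehat{x_i}$ or backward from $\widehat{f}^n(\widehat{x_i})$ (both in $\widehat{\Gamma_0}$), one checks that $r_2(\widehat{f}^l(\widehat{x_i}))$ dominates $3 e^{-4\delta n}$ for $n$ large, so that $g^{-1}_{\widehat{f}^l(\widehat{x_i})}$ is defined on the graph, and on this region $\gamma := \max(\|DR\|,\|DU\|) \leq C\, e^{-3\delta n}/\alpha_0$.

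Now apply Theorem \ref{graph} with $Y$ as the ``expanding'' coordinate. The hypothesis $\|(A^u_\delta)^{-1}\| < \|A^s_\delta\|^{-1}$ reduces to $e^{-\chi_{s+l_1}+\delta} < e^{-\chi_{s+l_1+1}-\delta}$, which holds because of the strict spectral gap $\chi_{s+l_1} - \chi_{s+l_1+1} > 2\delta$. The theorem then produces a graph $(\psi_l(Y),Y)$ whose Lipschitz constant is at most
\[
\frac{\|(A^u_\delta)^{-1}\|\cdot 2\gamma_0 + \gamma(1+2\gamma_0)}{\|A^s_\delta\|^{-1} - \gamma(1+2\gamma_0)} \leq 2\gamma_0\, e^{\chi_{s+l_1+1}-\chi_{s+l_1}+2\delta}(1+o(1)),
\]
which is bounded by $2\gamma_0$ for $\delta$ small and $n$ large.

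The point I expect to be the main obstacle is controlling the size of the new domain: Theorem \ref{graph} only guarantees that the projection contains $B\bigl(0,(\|A^s_\delta\|^{-1} - \gamma(1+2\gamma_0))\,e^{-4\delta n} - 2\gamma e^{-4\delta n}\bigr)$, and this must contain $B_{k-s-l_1}(0,e^{-4\delta n})$. This requires \emph{strict} expansion in the $Y$-direction under $g^{-1}$, i.e.\ $\|A^s_\delta\|^{-1} > 1$. The key observation is that $\chi_{s+l_1} \leq \chi_{s+1} \leq 0$, so the strict inequality $\chi_{s+l_1} > \chi_{s+l_1+1}$ forces $\chi_{s+l_1+1} < 0$; hence $\|A^s_\delta\|^{-1} \geq e^{-\chi_{s+l_1+1}-\delta} > 1$ for $\delta$ small, and the projection radius exceeds $e^{-4\delta n}$, which finishes the proof.
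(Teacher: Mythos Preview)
Your argument is correct and follows essentially the same route as the paper: apply the graph transform Theorem~\ref{graph} to $g^{-1}_{\widehat{f}^l(\widehat{x_i})}$ with the roles of the two directions exchanged, bound the nonlinear error by $\gamma\leq \tfrac{3}{\alpha_0}e^{-3\delta n}$ via Proposition~\ref{prop2} and temperedness of $r_2$, check the Lipschitz contraction using the spectral gap $\chi_{s+l_1}>\chi_{s+l_1+1}$, and recover the full base radius $e^{-4\delta n}$ from the strict negativity $\chi_{s+l_1+1}<0$. Your explicit remark that this last strict inequality is forced by $\chi_{s+l_1}\leq 0$ together with the gap is exactly the point the paper uses (implicitly) when estimating $t\geq e^{-4\delta n}$.
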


\begin{Proof}

We need to verify the hypothesis of the graph transform Theorem. 

In the previous coordinate system, we can write

$$g_{\widehat{f}^{l}(\widehat{x_{i}})}^{-1}(X,Y)=(AX+R(X,Y), BY+ U(X,Y))$$ 

where $(A,B)=diag((A_{\delta}^1(\widehat{f}^l(\widehat{x_i})))^{-1}, \cdots , (A_{\delta}^{p_0}(\widehat{f}^l(\widehat{x_i})))^{-1})$, $\|A\| \leq \e^{- \chi_{s+l_1} + \delta}$ and $\|B^{-1}\|^{-1} \geq \e^{- \chi_{s+l_1+1} - \delta}$.

By using the Proposition \ref{prop2} we have

\begin{equation*}
\begin{split}
\max( \| DR(X,Y)\|, \|DU(X,Y)\|) &\leq \frac{1}{r_2(\widehat{f}^l(\widehat{x_i}))} \times \|(X,Y)\| \\
&\leq \frac{1}{\alpha_0 \e^{ -\delta n}} 3 \e^{-4 \delta n} = \frac{3}{\alpha_0} \e^{-3 \delta n}\\
\end{split}
\end{equation*}

because $r_2$ is a tempered function and with $\|(X,Y)\| \leq R_0= 3 \e^{-4 \delta n}$.

Now, we verify the hypothesis of the graph transform Theorem. We have

$$\gamma \|B^{-1} \| (1+ 2 \gamma_0) \leq \frac{3}{\alpha_0} \e^{-3 \delta n} \e^{ \chi_{s+l_1+1} + \delta}(1+ 2 \gamma_0) < 1$$

$$ \frac{\|A\|2 \gamma_0 + \gamma(1+ 2 \gamma_0)}{ \|B^{-1}\|^{-1} - \gamma(1+ 2 \gamma_0)} \leq \frac{\e^{- \chi_{s+l_1} + \delta} 2\gamma_0 + \frac{3}{\alpha_0} \e^{- 3\delta n} \times 2}{\e^{- \chi_{s+l_1+1} - \delta}-\frac{3}{\alpha_0} \e^{- 3 \delta n} \times 2} \leq 2 \gamma_0$$

for $n$ high enough (take $\delta$ small to have $\e^{ - \chi_{s+l_1} +\chi_{s+l_1+1}  + 2 \delta} < 1$).

So we obtain a graph at least over $B_{k-s-l_1}(0,t)$ with

\begin{equation*}
\begin{split}
t &\geq (\|B^{-1}\|^{-1}- \gamma(1+ 2 \gamma_0))\e^{-4 \delta n} - \gamma  2 \e^{-4 \delta n} \\
&\geq (\e^{- \chi_{s+l_1+1} - \delta} - \frac{3}{\alpha_0} \e^{- 3 \delta n} \times 2) \e^{-4 \delta n} -\frac{3}{\alpha_0} \e^{-3 \delta n} 2 \e^{-4 \delta n} \\
&=  \e^{-4 \delta n} (\e^{- \chi_{s+l_1+1} - \delta} - \frac{12}{\alpha_0} \e^{- 3 \delta n}) \geq \e^{-4 \delta n}.
\end{split}
\end{equation*}

By using the graph transform Theorem, the Lemma is proved.

\end{Proof}

We apply this Lemma with $l=n-1$ to the graph $(\psi_n(Y),Y)$ and then we obtain a graph $(\psi_{n-1}(Y),Y)$ at least over $B_{k-s-l_1}(0, \e^{-4 \delta n})$ and with $Lip \psi_{l} \leq 2 \gamma_0$. In order to apply again the Lemma with $l=n-2$, we have to estimate $\psi_{n-1}(0)$.

The graph $(\psi_{n-1}(Y),Y)$ contains $g_{\widehat{f}^{n-1}(\widehat{x_{i}})}^{-1}(C'(a))$ (because $C'(a) \in W^0_n(f^n(x_i))$, so all its preimages $g_{\widehat{f}^{l}(\widehat{x_{i}})}^{-1} \circ \cdots \circ g_{\widehat{f}^{n-1}(\widehat{x_{i}})}^{-1}(C'(a))$, for $l=0, \cdots, n-1$, are in the graphs $(X, \Phi_l(X))$ used to construct $W^0_n(f^n(x_i))$). By using the same argument than for $\psi_n$, we have thus $\| \psi_{n-1}(0) \| \leq 2 \e^{-4 \delta n}$. Moreover,

$$g_{\widehat{f}^{n-1}(\widehat{x_{i}})}^{-1}(C'(a))=( \psi_{n-1}(\beta), \beta)=(\alpha, \Phi_{n-1}(\alpha))$$

with $\| \alpha \| \leq \e^{-4 \delta n}$. So,

$$\| \beta \| = \| \Phi_{n-1}(\alpha) \| = \| \Phi_{n-1}(\alpha) - \Phi_{n-1}(0) \| \leq \gamma_0 \| \alpha \| \leq \gamma_0 \e^{-4 \delta n}$$

and then this point will not be removed when we will do a cut-off over $B_{k-s-l_1}(0, \e^{-4 \delta n})$.

Now, we do this cut-off and we take the preimage by $g_{\widehat{f}^{n-2}(\widehat{x_{i}})}$ and so on. At the end, we obtain a graph $(\psi_0(Y),Y)$ over $B_{k-s-l_1}(0, \e^{-4 \delta n}) \subset C_{\delta}^{-1}(\widehat{x_{i}})E^{s}(\widehat{x_{i}})$ with $Lip \psi_0 \leq 2 \gamma_0$.

Denote by $V_i$ the image by $\tau_{x_i} \circ C_{\delta}(\widehat{x_{i}})$ of the graph $(\psi_0(Y),Y)$ (for $i=1, \cdots , N''$). First, remark that

$$V_i \subset \tau_{x_i} \circ C_{\delta}(\widehat{x_{i}}) \circ g_{\widehat{x_{i}}}^{-1} \circ \cdots \circ g_{\widehat{f}^{n-1}(\widehat{x_{i}})}^{-1}(C'(\Delta')) = f^{-n}(\tau_{f^n(x_{i_0})} \circ C_{\delta}(\widehat{f}^n(\widehat{x_{i_0}})))(\Delta')=f^{-n}(\Delta).$$

The $V_i$ are then in $f^{-n}(\Delta)$ and they are the graphs that we wanted. We use now these sets to finish the proof of the Theorem \ref{th2}.

\subsubsection{\bf End of the proof}

Firstly, the $V_i$ are $(n, 5 \delta)$-separated. Indeed, take $i \neq j$. As $x_i$ and $x_j$ are $(n, 6 \delta)$-separated, there exits $l \in \{ 0 , \cdots , n-1 \}$ such that $dist(f^l(x_i), f^l(x_j)) \geq 6 \delta$. Moreover

\begin{equation*}
\begin{split}
f^l(V_i)&= \tau_{f^l(x_i)} \circ C_{\delta}( \widehat{f}^l(\widehat{x_{i}})) \circ g_{\widehat{f}^{l-1}(\widehat{x_{i}})} \circ \cdots \circ g_{\widehat{x_{i}}} \circ C_{\delta}(\widehat{x_{i}})^{-1} \tau_{x_i}^{-1} (V_i)\\
&\subset  \tau_{f^l(x_i)} \circ C_{\delta}(  \widehat{f}^l(\widehat{x_{i}}))( \mbox{graph of  } \psi_l)
\end{split}
\end{equation*}

because $g_{\widehat{f}^{p}(\widehat{x_{i}})}( \mbox{graph of  } \psi_p) \subset  \mbox{graph of  } \psi_{p+1}$ for $p=0, \cdots , n-1$.

We have 

$$ \| \psi_l(Y) \| \leq \| \psi_l(Y)- \psi_l(0) \| + \| \psi_l(0) \| \leq 2 \gamma_0 \e^{-4 \delta n} + 2 \e^{-4 \delta n}$$

for $\|Y\| \leq \e^{-4 \delta n}$, so if $y \in  \tau_{f^l(x_i)} \circ C_{\delta}(  \widehat{f}^l(\widehat{x_{i}}))( \mbox{graph of  } \psi_l)$, we obtain 

$$dist(y,f^l(x_i)) \leq C \frac{\e^{\delta l}}{\alpha_0} (2 \gamma_0 \e^{-4 \delta n} + 2 \e^{-4 \delta n} + \e^{-4 \delta n}) \leq \frac{\delta}{2}$$

for $n$ high enough (recall that $\tau_{f^l(x_i)} \circ C_{\delta}(  \widehat{f}^l(\widehat{x_{i}}))(0)=f^l(x_i)$).

So, if $y_i \in f^l(V_i)$ and $y_j \in f^l(V_j)$ are such that $dist(f^l(V_i),f^l( V_j))= dist(y_i,y_j)$, we have

$$dist(f^l(V_i),f^l( V_j)) \geq -dist(y_i, f^l(x_i)) - dist(y_j, f^l(x_j)) + dist(f^l(x_i),f^l(x_j)) \geq 5 \delta.$$

Thus the $V_i$ are $(n, 5 \delta)$-separated.

It remains to prove that the $V_i$ are in $X_{k-s-l_1}^{5 \delta , n}$.

We have $V_i= \tau_{x_i} \circ C_{\delta}(\widehat{x_{i}}) ( \mbox{graph of  } \psi_0)$ and $(\psi_0(Y),Y)$ is a graph over $B_{k-s-l_1}(0, \e^{-4 \delta n})$ with $\| \psi_0(0) \| \leq 2 \e^{-4 \delta n}$ and $Lip(\psi_0) \leq 2 \gamma_0$.

The image by $C_{\delta}(\widehat{x_{i}})$ is a graph $(\zeta_0(Y),Y)$, in the coordinate system $E^{u}(\widehat{x_{i}}) \oplus E^{s}(\widehat{x_{i}})$, at least over $B_{k-s-l_1}(0, \alpha_0 \e^{-4 \delta n})$ with $Lip(\zeta_0) \leq \frac{2 \gamma_0}{ \alpha_0^2} \leq 1.$
 
Now $\tau_{x_i}= \psi \circ t$ where $t$ is a translation of vector $(t_1,t_2) \in \Cc^{s+l_1} \oplus \Cc^{k-s-l_1}$ and $\psi$ is a chart of $X$, so

$$t(\zeta_0(Y),Y)=(\zeta_0(Y)+t_1,Y+t_2)=(\theta_0(Y'),Y')$$

with $Y'=Y+t_2$ and $\theta_0(Y')=\zeta_0(Y)+t_1= \zeta_0(Y' - t_2)+t_1$

is a graph $(\theta_0(Y'),Y')$ over at least $B_{k-s-l_1}(t_2, \e^{-5 \delta n})$ with $Lip( \theta_0)= Lip( \zeta_0) \leq 1$. This is an element in $X_{k-s-l_1}^{5 \delta , n}$ (by construction the iterates do not meet too the indeterminacy set $I$).

Up to divide $\Delta$ into $C(\delta)$ pieces (and so by changing $N_1$ into $\frac{N_1}{C(\delta)}$), we can consider that $\Delta$ is in $X_{k-s-l_1}^{5 \delta}$.

So, we have

\begin{equation*}
\begin{split}
&\frac{1}{n} \log \sup_{\Delta \in X_{k-s-l_1}^{5 \delta} } ( \max \# E \mbox{ , } E \mbox{   } (n,5 \delta) \mbox{-separated   } E \subset X_{k-s-l_1}^{5 \delta , n} \mbox{   and   } \forall W \in E \mbox{         } W \subset f^{-n}(\Delta) )  \\
& \geq \frac{1}{n} \log \frac{\e^{ h_{\mu}(f)n - \epsilon n + 2\chi_{s+1}n + \cdots + 2\chi_{s+l_1}n - 50 \delta kn}}{C(\delta)}\\
& =  h_{\mu}(f) - \epsilon + 2\chi_{s+1} + \cdots + 2\chi_{s+l_1} - 50 \delta k - \frac{1}{n} \log C(\delta)
\end{split}
\end{equation*}

and then

\begin{equation*}
\begin{split}
 &\overline{\lim_n} \frac{1}{n} \log \sup_{\Delta \in X_{k-s-l_1}^{5 \delta} } ( \max \# E \mbox{ , }
E \mbox{   } (n,5 \delta) \mbox{-separated   } E \subset X_{k-s-l_1}^{5 \delta , n} \mbox{   and   } \forall W \in E \mbox{        } W \subset f^{-n}(\Delta) )  \\
& \geq  h_{\mu}(f) - \epsilon + 2\chi_{s+1} + \cdots + 2\chi_{s+l_1} - 50 \delta k
\end{split}
\end{equation*}

and
 
\begin{equation*}
\begin{split}
h_{(k-s-l_1,k-s-l_1)}^{top}(f)= \overline{\lim_{\delta \to 0}} \overline{\lim_n} \frac{1}{n} \log \sup_{\Delta \in X_{k-s-l_1}^{\delta} } ( & \max \# E \mbox{ , } E \mbox{   } (n,\delta) \mbox{-separated   } E \subset X_{k-s-l_1}^{\delta , n}\\ & \mbox{   and   } \forall W \in E \mbox{   we have   } W \subset f^{-n}(\Delta) )  \\
\geq h_{\mu}(f) - \epsilon + 2\chi_{s+1} + \cdots + 2\chi_{s+l_1},
\end{split}
\end{equation*}

and the result follows by letting $\epsilon \to 0$.

\subsection{\bf{Proof of the Theorem \ref{th1}}}

The beginning is almost the same than in the proof of the Theorem \ref{th2}.

We construct $x_1, \cdots , x_N \in \Lambda$ which are $(n, 9 \delta)$-separated (instead of $(n, 6\delta)$-separated in Theorem \ref{th2}) and $N \geq \frac{\e^{h_{\mu}(f)n - \epsilon n}}{5}$. We can find $N'= \frac{\e^{h_{\mu}(f)n - \epsilon n}}{5} \e^{-16 \delta k n}$ indices $i$ with $f^n(x_i)$ in the same ball $B(x, \e^{-8 \delta  n})$.

Consider the notations just before the Lemma \ref{lem2}.

We pushforward $n$ times a graph $(X', \xi_0(X'))$ over a part of $C_{\delta}^{-1}(\widehat{x_i})E_1^{u}(\widehat{x_i})$ (with $\xi_0 \equiv 0$).

As previously, we obtain a graph $(X', \xi_n(X'))$ over $B_s(0, \e^{-4 \delta n}) \subset C_{\delta}^{-1}(\widehat{f}^n(\widehat{x_i}))E_1^{u}(\widehat{f}^n(\widehat{x_i}))$ with $Lip(\xi_n) \leq \gamma_0$ and $\xi_n(0)=0$.

We want to considerate these graphs in the coordinate system 

$$C_{\delta}^{-1}(\widehat{f}^n(\widehat{x_{i_0}}))E_1^{u}(\widehat{f}^n(\widehat{x_{i_0}})) \oplus C_{\delta}^{-1}(\widehat{f}^n(\widehat{x_{i_0}}))E_2^{u}(\widehat{f}^n(\widehat{x_{i_0}})) \oplus C_{\delta}^{-1}(\widehat{f}^n(\widehat{x_{i_0}})) E^{s}(\widehat{f}^n(\widehat{x_{i_0}})),$$

it means to take the image by 

$$C=C_{\delta}^{-1}(\widehat{f}^n(\widehat{x_{i_0}})) \tau_{f^n(x_{i_0})}^{-1} \tau_{f^n(x_{i})} C_{\delta}(\widehat{f}^n(\widehat{x_{i}})).$$

As done previously, we obtain a graph $(X', \zeta_n(X'))$ over a part of $C_{\delta}^{-1}(\widehat{f}^n(\widehat{x_{i_0}}))E_1^{u}(\widehat{f}^n(\widehat{x_{i_0}}))$ with $Lip(\zeta_n) \leq 2 \gamma_0$ (because if we divide again $N'$ by $N(\eta)$, we can suppose that $d(\widehat{f}^n(\widehat{x_i}), \widehat{f}^n(\widehat{x_{i_0}}) < \eta$).

The $2s$-volume of the graph $(X', \xi_n(X'))$ is higher than $\e^{-8k \delta n}$, so the $2s$-volume of the graph $(X', \zeta_n(X'))$ is bounded by below by $\epsilon(\gamma_0) \e^{-8k \delta n}$. 

Denote by $W_n(f^n(x_i))$ these graphs. The $2s$-volume of the projection of the union of these graphs on $C_{\delta}^{-1}(\widehat{f}^n(\widehat{x_{i_0}}))E_1^{u}(\widehat{f}^n(\widehat{x_{i_0}}))$, counted with multiplicity, is higher than

$$\epsilon(\gamma_0) \e^{-8k \delta n} \frac{\e^{h_{\mu}(f)n - \epsilon n}}{5N(\eta)} \e^{-16 \delta k n},$$

so we can find a point $P \in C_{\delta}^{-1}(\widehat{f}^n(\widehat{x_{i_0}}))E_1^{u}(\widehat{f}^n(\widehat{x_{i_0}}))$ covered at least

$$\epsilon_0^{-2k} \epsilon(\gamma_0) \e^{-8k \delta n} \frac{\e^{h_{\mu}(f)n - \epsilon n}}{5N(\eta)} \e^{-16 \delta k n}$$

times.

Consider $\Delta' = \pi_{1,1}^{-1}(P)$ where $\pi_{1,1}$ is the orthogonal projection on $C_{\delta}^{-1}(\widehat{f}^n(\widehat{x_{i_0}}))E_1^{u}(\widehat{f}^n(\widehat{x_{i_0}}))$.

It is $\Delta= \tau_{f^n(x_{i_0})} \circ C_{\delta}(  \widehat{f}^n(\widehat{x_{i_0}}))(\Delta')$ that we pull-back and for which we construct graphs inside $f^{-n}(\Delta)$.

By construction, $\Delta'$ meets $N_1 \geq \e^{h_{\mu}(f)n - \epsilon n} \e^{-25k \delta n}$ sets $W_n(f^n(x_i))$ (for $n$ high enough). Denote $i=1, \cdots , N_1$ these indices to simplify the notations.

We want to considerate $\Delta'$ in the coordinate system 

$$C_{\delta}^{-1}(\widehat{f}^n(\widehat{x_{i}}))E_1^{u}(\widehat{f}^n(\widehat{x_{i}})) \oplus C_{\delta}^{-1}(\widehat{f}^n(\widehat{x_{i}}))E_2^{u}(\widehat{f}^n(\widehat{x_{i}})) \oplus C_{\delta}^{-1}(\widehat{f}^n(\widehat{x_{i}})) E^{s}(\widehat{f}^n(\widehat{x_{i}}))$$.

It means to take the image by 

$$C'=C_{\delta}^{-1}(\widehat{f}^n(\widehat{x_{i}})) \tau_{f^n(x_{i})}^{-1} \tau_{f^n(x_{i_0})} C_{\delta}(\widehat{f}^n(\widehat{x_{i_0}})).$$

As previously, $C'(\Delta')$ is a graph $(\Phi_n(Y),Y)$ over $B_{k-s}(0, \e^{-4\delta n}) \subset C_{\delta}^{-1}(\widehat{f}^n(\widehat{x_{i}}))E_2^{u}(\widehat{f}^n(\widehat{x_{i}})) \oplus C_{\delta}^{-1}(\widehat{f}^n(\widehat{x_{i_0}})) E^{s}(\widehat{f}^n(\widehat{x_{i_0}}))$ with $Lip(\Phi_n) \leq 2 \gamma_0$.

Take $a \in \Delta' \cap W_n(f^n(x_i))$. The point $C'(a)$ is in the graph $(X', \xi_n(X'))$, so $C'(a)=(\alpha, \xi_n(\alpha))$ with $\alpha \in B_s(0, \e^{-4\delta n})$. Now

$$\| \xi_n(\alpha) \| \leq \| \xi_n(\alpha) - \xi_n(0) \| + \| \xi_n(0) \| \leq \gamma_0 \| \alpha \| \leq \gamma_0 \e^{-4\delta n}$$

so the projection of $C'(a)$ on $C_{\delta}^{-1}(\widehat{f}^n(\widehat{x_{i}}))E_2^{u}(\widehat{f}^n(\widehat{x_{i}})) \oplus C_{\delta}^{-1}(\widehat{f}^n(\widehat{x_{i}})) E^{s}(\widehat{f}^n(\widehat{x_{i}}))$ is inside $B_{k-s}(0,\e^{-4\delta n})$.

So $C'(a)= (\Phi_n(\beta),\beta)$ with $\beta \in B_{k-s}(0,\e^{-4\delta n})$ which implies that

$$\| \Phi_n(0) \| \leq \| \Phi_n(0) - \Phi_n(\beta) \| + \| \Phi_n(\beta) \| \leq 2 \gamma_0 \e^{-4\delta n} + \| \alpha\| \leq 2 \e^{-4\delta n}.$$

We take the pull-back of the graph $(\Phi_n(Y),Y)$ by $g_{\widehat{f}^{n-1}(\widehat{x_{i}})}$ but this time we can have a Lyapounov exponent equal to $0$. To solve this problem, we use this Lemma:

\begin{Lem}

Fix $l=0, \cdots , n-1$. In the coordinate system

$$C_{\delta}^{-1}(\widehat{f}^{l+1}(\widehat{x_{i}}))E_1^{u}(\widehat{f}^{l+1}(\widehat{x_{i}})) \oplus C_{\delta}^{-1}(\widehat{f}^{l+1}(\widehat{x_{i}}))E_2^{u}(\widehat{f}^{l+1}(\widehat{x_{i}})) \oplus C_{\delta}^{-1}(\widehat{f}^{l+1}(\widehat{x_{i}}))E^{s}(\widehat{f}^{l+1}(\widehat{x_{i}})),$$

we consider the graph $(\Phi_{l+1}(Y),Y)$ over 

$$B_{k-s}(0, \e^{-4 \delta n - 2 \delta (n-l-1)}) \subset C_{\delta}^{-1}(\widehat{f}^{l+1}(\widehat{x_{i}}))E_2^{u}(\widehat{f}^{l+1}(\widehat{x_{i}})) \oplus C_{\delta}^{-1}(\widehat{f}^{l+1}(\widehat{x_{i}}))E^{s}(\widehat{f}^{l+1}(\widehat{x_{i}}))$$

with $Lip \Phi_{l+1} \leq 2 \gamma_0$ and $\| \Phi_{l+1} (0) \| \leq 2 \e^{-4 \delta n}$. Then the image of this graph by $g_{\widehat{f}^{l}(\widehat{x_{i}})}^{-1}$ is a graph $(\Phi_{l}(Y),Y)$ at least over $B_{k-s}(0, \e^{-4 \delta n - 2 \delta (n-l-1) -2 \delta})$ with $Lip \Phi_{l} \leq 2 \gamma_0$ and $\| \Phi_l(0) \| \leq 2 \e^{-4 \delta n}$.
 
\end{Lem}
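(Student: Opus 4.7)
The plan is to apply the graph transform theorem (Theorem~\ref{graph}) to $g^{-1}_{\widehat{f}^l(\widehat{x_i})}$, following exactly the scheme of Lemma~\ref{lemme3}, but with the finer decomposition that splits the unstable directions into $E_1^u$ (strictly positive exponents $\chi_1,\ldots,\chi_s$) and $E_2^u$ (vanishing exponents). The graph $(\Phi_{l+1}(Y),Y)$ has its values in $E_1^u$, which contracts under $g^{-1}$, and its base in $E_2^u \oplus E^s$, which expands only weakly under $g^{-1}$; this asymmetry is exactly what the graph transform theorem exploits.

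First, in the given coordinates I would write
$$g^{-1}_{\widehat{f}^l(\widehat{x_i})}(X,Y) = (AX + R(X,Y),\ BY + U(X,Y)),$$
with $X$ in the base direction (image of $E_2^u \oplus E^s$ under $C_\delta^{-1}$) and $Y$ in the value direction (image of $E_1^u$). The Oseledets block structure of Theorem~\ref{oseledets} gives $\|A^{-1}\|^{-1} \geq \e^{-\chi_{s+1}-\delta} \geq \e^{-\delta}$ (since $\chi_{s+1}\leq 0$), $\|A^{-1}\|\leq \e^{\delta}$, and $\|B\| \leq \e^{-\chi_s+\delta} < 1$ (since $\chi_s>0$). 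Proposition~\ref{prop2}, combined with the temperedness estimate $r_2(\widehat{f}^l(\widehat{x_i})) \geq \alpha_0 \e^{-\delta n}$ for $l\leq n$, gives $\gamma := \max(\|DR\|,\|DU\|) \leq (3/\alpha_0)\e^{-3\delta n}$ on the ball of radius $R_0 = 3 \e^{-4\delta n}$ containing the graph.

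Next, I would verify the hypotheses of Theorem~\ref{graph}: the bound $\gamma \|A^{-1}\|(1+2\gamma_0) < 1$ is automatic since $\gamma$ is exponentially small, and the Lipschitz-contraction inequality becomes
$$\frac{\|B\|\cdot 2\gamma_0 + \gamma(1+2\gamma_0)}{\|A^{-1}\|^{-1} - \gamma(1+2\gamma_0)} \leq 2\gamma_0 \e^{-\chi_s+2\delta} + O(\e^{-2\delta n}) \leq 2\gamma_0,$$
which holds whenever $\chi_s > 2\delta$ and $n$ is large. The theorem then yields $\Phi_l$ with $\mathrm{Lip}(\Phi_l) \leq 2\gamma_0$, defined over a ball of radius
$$t \geq (\|A^{-1}\|^{-1} - \gamma(1+2\gamma_0))\alpha - \gamma\beta,$$
where $\alpha = \e^{-4\delta n - 2\delta(n-l-1)}$ and $\beta = 2\e^{-4\delta n}$; since $\|A^{-1}\|^{-1} \geq \e^{-\delta}$ and the corrections are $O(\e^{-3\delta n})$, this exceeds $\e^{-2\delta}\alpha = \e^{-4\delta n - 2\delta(n-l)}$, which is the claimed radius. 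For the base point estimate I would use the third conclusion of Theorem~\ref{graph}:
$$\|\Phi_l(0)\| \leq (1+2\gamma_0)\bigl(\|B\|\beta + \gamma\beta + \|D^2 g^{-1}\|_{B_k(0,R_0)}\beta^2\bigr) \leq (1+2\gamma_0)\beta\bigl(\e^{-\chi_s+\delta} + o(1)\bigr),$$
and the bracketed factor stays below $1/(1+2\gamma_0)$ for $\gamma_0,\delta$ small and $n$ large, so $\|\Phi_l(0)\| \leq \beta = 2\e^{-4\delta n}$, reproducing the inductive hypothesis.

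The main obstacle, compared to Lemma~\ref{lemme3}, is the presence of $E_2^u$ with vanishing Lyapunov exponents: the lower bound $\|A^{-1}\|^{-1} \geq \e^{-\delta}$ leaves only a sliver of margin above $\e^{-2\delta}$, so there is essentially nothing to spare. This scarcity is what forces the controlled shrinking of the base radius by a factor $\e^{-2\delta}$ per pullback step, and one must verify carefully that the true expansion $\e^{-\delta}$ of $A^{-1}$ does dominate the shrinkage, leaving just enough room to absorb the exponentially small nonlinear corrections. Conversely, the bound $\|\Phi_l(0)\|\leq 2\e^{-4\delta n}$ remains fixed along the iteration because $\|B\|<1$ provides a genuine contraction in the value direction, uniformly in $l$.
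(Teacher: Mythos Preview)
Your proposal is correct and follows the same approach as the paper: apply the graph transform theorem to $g^{-1}_{\widehat{f}^l(\widehat{x_i})}$ with the splitting $E_1^u$ versus $E_2^u\oplus E^s$, bound the nonlinear terms via Proposition~\ref{prop2} and temperedness, and verify the Lipschitz, radius, and base-point estimates. The only difference is notational: the paper keeps the letters $(A,B)$ attached to $(E_1^u, E_2^u\oplus E^s)$ as in Lemma~\ref{lemme3} (so it checks $\|A\|\le \e^{-\chi_s+\delta}$ and $\|B^{-1}\|^{-1}\ge \e^{-\delta}$), whereas you relabel so that $A$ acts on the base direction; the computations and constants are identical once this swap is accounted for.
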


\begin{proof}

We are in the same situation than in Lemma \ref{lemme3} with this time $\| A \| \leq \e^{- \chi_s + \delta}$ and $\|B^{-1} \|^{-1} \geq \e^{- \delta}$.

We have

$$\gamma \|B^{-1} \| (1+ 2 \gamma_0) \leq \frac{3}{\alpha_0} \e^{-3 \delta n} \e^{  \delta}(1+ 2 \gamma_0) < 1$$

$$ \frac{\|A\|2 \gamma_0 + \gamma(1+2 \gamma_0)}{ \|B^{-1}\|^{-1} - \gamma(1+2  \gamma_0)} \leq \frac{\e^{- \chi_{s} + \delta} 2\gamma_0 + \frac{3}{\alpha_0} \e^{-3 \delta n} \times 2}{\e^{ - \delta}-\frac{3}{\alpha_0} \e^{-3 \delta n} \times 2} \leq 2 \gamma_0$$

for $n$ high enough (take $\delta$ small to have $\e^{ - \chi_{s} + 2 \delta} < 1$).

So it is a graph at least over $B_{k-s}(0,t)$ with

\begin{equation*}
\begin{split}
t &\geq (\|B^{-1}\|^{-1}- \gamma(1+ 2 \gamma_0))\e^{-4 \delta n - 2\delta (n-l-1)} - \gamma  2 \e^{-4 \delta n}\\
 &\geq (\e^{ - \delta} - \frac{3}{\alpha_0} \e^{-3 \delta n} \times 2) \e^{-4 \delta n - 2\delta (n-l-1)} -\frac{3}{\alpha_0} \e^{- 3 \delta n} 2 \e^{-4 \delta n} \\
&=  \e^{-4 \delta n - 2\delta (n-l-1)} (\e^{ - \delta} - \frac{6}{\alpha_0} \e^{-3  \delta n} - \frac{6}{\alpha_0} \e^{-3  \delta n+ 2\delta (n-l-1)} ) \\
&\geq \e^{-4 \delta n - 2\delta (n-l-1)- 2 \delta}.
\end{split}
\end{equation*}

And we have

\begin{equation*}
\begin{split}
\| \Phi_l(0)\| &\leq (1+ 2 \gamma_0)2 \e^{-4 \delta n} (\|A\| + \gamma + \|D^2 g_{\widehat{f}^{l}(\widehat{x_{i}})}^{-1}\|_{B_k(0, R_0)} 2 \e^{-4 \delta n})\\
&\leq  \e^{\delta/2} 2 \e^{-4 \delta n}(\e^{- \chi_s + \delta} + \frac{3}{\alpha_0} \e^{- 3 \delta n} + \frac{1}{\alpha_0} \e^{ \delta n} 2 \e^{-4 \delta n})\\
&\leq 2 \e^{-4 \delta n}\\
\end{split}
\end{equation*}

(we can take $1+ 2\gamma_0 \leq \e^{\delta/2}$ and $\e^{- \chi_s + \delta} \leq \e^{- \delta}$ if we want).

By using the graph transform Theorem, the Lemma is proved.

\end{proof}

We take the pull-back of the graph $(\Phi_n(Y),Y)$ by $g_{\widehat{f}^{n-1}(\widehat{x_{i}})}, \cdots ,g_{\widehat{f}(\widehat{x_{i}})} $ and at the end we obtain a graph $(\Phi_0(Y),Y)$ over 

$$B_{k-s}(0,\e^{-6 \delta n}) \subset C_{\delta}^{-1}(\widehat{x_{i}})E_2^{u}(\widehat{x_{i}}) \oplus C_{\delta}^{-1}(\widehat{x_{i}})E^{s}(\widehat{x_{i}})$$ 

with $Lip(\Phi_0) \leq 2 \gamma_0$ and $\| \Phi_0(0) \| \leq 2 \e^{- 4 \delta n}$.

Now, we follow the end of the proof of the Theorem 2: we denote by $V_i$ the image by  $\tau_{x_i} \circ C_{\delta}(\widehat{x_{i}})$ of the graphs $(\Phi_0(Y),Y)$ (for $i=1, \cdots , N_1$). We have $V_i \subset f^{-n}(\Delta)$, the $V_i$ are in $X_{k-s}^{n, 7 \delta}$ and the $V_i$ are $(n, 7 \delta)$-separated. 

Up to divide $\Delta$ into $C(\delta)$ pieces (and so by changing $N_1$ into $\frac{N_1}{C(\delta)}$), we can consider that $\Delta$ is in $X_{k-s}^{7 \delta}$. Thus, we have

\begin{equation*}
\begin{split}
  &\frac{1}{n} \log \sup_{\Delta \in X_{k-s}^{7 \delta} } ( \max \# E \mbox{ , }
E \mbox{   } (n,7 \delta) \mbox{-separated   } E \subset X_{k-s}^{7 \delta , n} \mbox{   and   } \forall W \in E \mbox{          } W \subset f^{-n}(\Delta) )  \\
& \geq \frac{1}{n} \log \frac{\e^{h_{\mu}(f)n - \epsilon n} \e^{-25k \delta n}}{C(\delta)} =  h_{\mu}(f) - \epsilon -25  \delta k - \frac{1}{n} \log C(\delta).\\
\end{split}
\end{equation*}

And we conclude by taking the limits as in the end of the proof of the Theorem 2.

It remains to prove the Theorem 3.

\subsection{\bf{Proof of the Theorem \ref{th3}}}

The beginning is almost the same than in the proof of the Theorem \ref{th2}.

We construct $x_1, \cdots , x_N \in \Lambda$ which are $(n, 2 \delta)$-separated (instead of $(n, 9 \delta)$ in the Theorem \ref{th2}) and $N \geq \frac{\e^{h_{\mu}(f)n - \epsilon n}}{5}$. We can find $N'= \frac{\e^{h_{\mu}(f)n - \epsilon n}}{5} \e^{-16 \delta k n}$ indices $i$ with $f^n(x_i)$ in the same ball $B(x, \e^{-8 \delta  n})$.

We consider the coordinate system

$$C_{\delta}^{-1}(\widehat{x_{i}})E_1^{u}(\widehat{x_{i}}) \oplus C_{\delta}^{-1}(\widehat{x_{i}})E_2^{u}(\widehat{x_{i}}) \oplus C_{\delta}^{-1}(\widehat{x_{i}})E^{s}(\widehat{x_{i}})$$ 

and the set $B_{s}(0, \e^{-4 \delta n}) \times B_{k-s}(0, \e^{-8 \delta n})$ with $B_{s}(0, \e^{-4 \delta n}) \subset C_{\delta}^{-1}(\widehat{x_{i}})E_1^{u}(\widehat{x_{i}})$ and $B_{k-s}(0, \e^{-8 \delta n}) \subset C_{\delta}^{-1}(\widehat{x_{i}})E_2^{u}(\widehat{x_{i}}) \oplus C_{\delta}^{-1}(\widehat{x_{i}})E^{s}(\widehat{x_{i}})$.

We take the image of this set by $g_{\widehat{x_{i}}}$ and we do a cut-off with $B_k(0, \e^{-4 \delta n})$ and so on with $g_{\widehat{f}(\widehat{x_{i}})} , \cdots ,g_{\widehat{f}^{n-1}(\widehat{x_{i}})}$. At the end, we obtain a set $W^0_n(f^n(x_i))$ in the coordinate system

$$C_{\delta}^{-1}(\widehat{f}^{n}(\widehat{x_{i}}))E_1^{u}(\widehat{f}^{n}(\widehat{x_{i}})) \oplus C_{\delta}^{-1}(\widehat{f}^{n}(\widehat{x_{i}}))E_2^{u}(\widehat{f}^{n}(\widehat{x_{i}})) \oplus C_{\delta}^{-1}(\widehat{f}^{n}(\widehat{x_{i}}))E^{s}(\widehat{f}^{n}(\widehat{x_{i}})).$$

We want to bound by below the $2k$-volume (counted with multiplicity) of $W^0_n(f^n(x_i))$. First, we prove that this multiplicity is equal to $1$:

\begin{Lem}

For $l=0, \cdots, n-1$, $g_{\widehat{f}^{l}(\widehat{x_{i}})}$ is one-to-one on $B_k(0, \e^{-4 \delta n})$. 

\end{Lem}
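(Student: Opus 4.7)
The plan is to use the decomposition provided by Proposition \ref{prop1}, namely write $g_{\widehat{f}^l(\widehat{x_i})}(w)=A_\delta(\widehat{f}^l(\widehat{x_i}))w+h(w)$ where $A_\delta(\widehat{f}^l(\widehat{x_i}))$ is block diagonal with block-wise lower bound $e^{\lambda_{p_0}-\delta}$ (with $\lambda_{p_0}$ the smallest Lyapounov exponent, which is finite thanks to the hypothesis $\int \log d(x,\mathcal{A})\,d\mu(x)>-\infty$), and to show that on $B_k(0,e^{-4\delta n})$ the perturbation $h$ is so contractive that it cannot wipe out the linear expansion/contraction encoded in $A_\delta$.

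First I would quantify the size of the domain relative to the radius of holomorphy $r_1(\widehat{f}^l(\widehat{x_i}))$ appearing in Proposition \ref{prop1}. Since $\widehat{x_i}\in\widehat{\Gamma_0}$ implies $r_1(\widehat{x_i})\geq \alpha_0$, and $r_1$ is tempered with $e^{-\delta}\leq r_1(\widehat{f}(\widehat{x}))/r_1(\widehat{x})\leq e^{\delta}$, one obtains $r_1(\widehat{f}^l(\widehat{x_i}))\geq \alpha_0 e^{-\delta l}\geq \alpha_0 e^{-\delta n}$ for every $l\in\{0,\ldots,n-1\}$. For $n$ large enough this is larger than $e^{-4\delta n}$, so $B_k(0,e^{-4\delta n})\subset B_k(0,r_1(\widehat{f}^l(\widehat{x_i})))$ and the estimate $\|Dh(w)\|\leq \|w\|/r_1(\widehat{f}^l(\widehat{x_i}))$ applies throughout the ball.

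Next I would derive the injectivity by a standard Lipschitz-versus-invertibility argument. If $w_1,w_2\in B_k(0,e^{-4\delta n})$ satisfy $g_{\widehat{f}^l(\widehat{x_i})}(w_1)=g_{\widehat{f}^l(\widehat{x_i})}(w_2)$, then
$$A_\delta(\widehat{f}^l(\widehat{x_i}))(w_1-w_2)=h(w_2)-h(w_1).$$
Using the block-diagonal structure of $A_\delta$ together with the max norm, one gets the lower bound $\|A_\delta(\widehat{f}^l(\widehat{x_i}))(w_1-w_2)\|\geq e^{\lambda_{p_0}-\delta}\|w_1-w_2\|$. On the other hand, by the mean value inequality applied on the convex set $B_k(0,e^{-4\delta n})$,
$$\|h(w_1)-h(w_2)\|\leq \sup_{w\in B_k(0,e^{-4\delta n})}\|Dh(w)\|\cdot\|w_1-w_2\|\leq \frac{e^{-4\delta n}}{\alpha_0 e^{-\delta n}}\|w_1-w_2\|=\frac{e^{-3\delta n}}{\alpha_0}\|w_1-w_2\|.$$
Combining the two estimates yields $e^{\lambda_{p_0}-\delta}\|w_1-w_2\|\leq \alpha_0^{-1}e^{-3\delta n}\|w_1-w_2\|$, which for $n$ large enough (so that $e^{-3\delta n}/\alpha_0<e^{\lambda_{p_0}-\delta}$, possible because $\lambda_{p_0}$ is finite) forces $w_1=w_2$.

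The only subtlety I expect is checking that the inequality for $A_\delta$ in the max norm really gives $\|A_\delta v\|\geq e^{\lambda_{p_0}-\delta}\|v\|$ uniformly; but this follows immediately from the block structure by picking the block realizing $\|v\|_\infty$. Once this is in place, the argument above goes through for every $l\in\{0,\ldots,n-1\}$, which is exactly the claim of the Lemma.
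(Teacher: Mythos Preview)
Your argument is correct and is essentially the same as the paper's proof: both rely on Proposition~\ref{prop1} to get $r_1(\widehat{f}^l(\widehat{x_i}))\geq \alpha_0 e^{-\delta l}$, the resulting bound $\|Dh(w)\|\leq \alpha_0^{-1}e^{-3\delta n}$ on $B_k(0,e^{-4\delta n})$, and the block lower bound $\|A_\delta v\|\geq e^{\chi_k-\delta}\|v\|$ (your $\lambda_{p_0}$ is the paper's $\chi_k$). The only cosmetic difference is that the paper composes with $(Dg_{\widehat{f}^l(\widehat{x_i})}(0))^{-1}$ and shows $\mathrm{Id}-(Dg(0))^{-1}\circ g$ has Lipschitz constant $<1/2$, whereas you compare the linear and nonlinear parts directly; these are two phrasings of the same inequality.
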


\begin{proof}

We use the same method than in \cite{BD} Lemma 2.

By using the Proposition \ref{prop1} and $r_1(\widehat{f}^l(\widehat{x})) \geq \alpha_0 \e^{- \delta l} \geq \e^{-4 \delta n}$, for $z \in B_k(0, \e^{-4 \delta n})$, we have

\begin{equation*}
\begin{split}
\|Id - (Dg_{\widehat{f}^{l}(\widehat{x_{i}})}(0))^{-1} \circ Dg_{\widehat{f}^{l}(\widehat{x_{i}})}(z)\| &\leq \|(Dg_{\widehat{f}^{l}(\widehat{x_{i}})}(0))^{-1}\| \|D^2 g_{\widehat{f}^{l}(\widehat{x_{i}})}\|_{B_k(0,\e^{-4 \delta n})} \|z\| \\
& \leq \e^{-\chi_k + \delta} \frac{\e^{\delta l}}{\alpha_0} \e^{-4 \delta n} < \frac{1}{2}.\\  
\end{split}
\end{equation*}

So $Lip(Id - (Dg_{\widehat{f}^{l}(\widehat{x_{i}})}(0))^{-1} \circ g_{\widehat{f}^{l}(\widehat{x_{i}})}) < \frac{1}{2}$ and this gives that $(Dg_{\widehat{f}^{l}(\widehat{x_{i}})}(0))^{-1} \circ g_{\widehat{f}^{l}(\widehat{x_{i}})}$ is one-to-one for $z \in B_k(0, \e^{-4 \delta n})$ and thus $g_{\widehat{f}^{l}(\widehat{x_{i}})}$ too.

\end{proof}

So there is no multiplicity for the $W^0_n(f^n(x_i))$. To bound by below their volume, we use the technics of foliation as in the proof of the Theorem 2.

We consider a slice (of the initial set) $B_{s}(0, \e^{-4 \delta n}) \times \{a_{s+1}\} \times  \cdots \times \{a_k\}$ with $(a_{s+1}, \cdots , a_k) \in B_{k-s}(0, \e^{-8 \delta n})$. This slice is a graph $(X', \xi_0(X'))$ over $B_{s}(0, \e^{-4 \delta n}) \subset C_{\delta}^{-1}(\widehat{x_{i}})E_1^{u}(\widehat{x_{i}})$. We push forward $n$ times this graph, we do cut-off with $B_{s}(0, \e^{-4 \delta n})$ and we obtain (see the proof of the Theorem 2 around the Lemma \ref{lem2}) a graph $(X', \xi_n(X'))$ over $B_{s}(0, \e^{-4 \delta n})$ with $Lip(\xi_n) \leq  \gamma_0$ and $\| \xi_n(0) \| \leq \e^{-8 \delta n+ 2 \delta n}=\e^{-6 \delta n}$.

By changing $(a_{s+1}, \cdots , a_k) \in B_{k-s}(0, \e^{-8 \delta n})$, we obtain a foliation of $W^0_n(f^n(x_i))$.

Now, we consider the complex plane of dimension $k-s$ given by the equations $x_1=b_1, \cdots , x_s=b_s$ with $(b_1 , \cdots , b_s) \in B_{s}(0, \e^{-4 \delta n})$.

The intersections $I_0=I_0(b_1 , \cdots , b_s)$ of these planes with $W^0_n(f^n(x_i))$ have dimension $k-s$. We want to bound by below the $2(k-s)$-volume of $I_0$ by using the same method than in the proof of the Theorem 2.

We have

$$vol_{2(k-s)} g_{\widehat{f}^{n-1}(\widehat{x_{i}})}^{-1}(I_0) = \int_{I_0} \| \Lambda^{k-s} D g_{\widehat{f}^{n-1}(\widehat{x_{i}})}^{-1}(z) \|^2 d \mathcal{H}^{2 (k-s)}(z)$$

and here directly

\begin{equation*}
\begin{split}
\| \Lambda^{k-s} D g_{\widehat{f}^{n-1}(\widehat{x_{i}})}^{-1}(z) \| &\leq \| \Lambda^{k-s} D g_{\widehat{f}^{n-1}(\widehat{x_{i}})}^{-1}(z)- \Lambda^{k-s} D g_{\widehat{f}^{n-1}(\widehat{x_{i}})}^{-1}(0) \|+ \| \Lambda^{k-s} D g_{\widehat{f}^{n-1}(\widehat{x_{i}})}^{-1}(0)\| \\
&\leq \epsilon(\gamma_0) + \e^{- \chi_{s+1} - \cdots -  \chi_{k} + \delta k} \\
&\leq (1+ \epsilon(\gamma_0)) \e^{- \chi_{s+1} - \cdots -  \chi_{k} + \delta k} \\
\end{split}
\end{equation*}

because $\| \Lambda^{k-s} D g_{\widehat{f}^{n-1}(\widehat{x_{i}})}^{-1}(z)- \Lambda^{k-s} D g_{\widehat{f}^{n-1}(\widehat{x_{i}})}^{-1}(0) \|$ is as small as we want if we take $n$ high enough by using the Proposition \ref{prop1} (we have $\| D^2 g_{\widehat{f}^{n-1}(\widehat{x_{i}})}^{-1}\| \leq \frac{\e^{\delta}}{\alpha_0}$ and $z \in B_{k}(0, \e^{-4 \delta n})$).

Now, we take the image by $g_{\widehat{f}^{n-2}(\widehat{x_{i}})}^{-1}$ and we do the same thing and so on. At the end, we obtain that

$$vol_{2(k-s)} (g_{\widehat{x_{i}}}^{-1} \circ \cdots \circ g_{\widehat{f}^{n-1}(\widehat{x_{i}})}^{-1} (I_0)) \leq \e^{- 2\chi_{s+1}n - \cdots - 2\chi_{k}n + 2n\delta k}(1+ \epsilon(\gamma_0))^{2n} \times vol_{2(k-s)}(I_0).$$

But the intersection between $g_{\widehat{x_{i}}}^{-1} \circ \cdots \circ g_{\widehat{f}^{n-1}(\widehat{x_{i}})}^{-1} (I_0)$ and $B_s(0, \e^{-4 \delta n}) \times \{a_{s+1}\} \times \cdots \times \{ a_{k} \} $ is not empty for $(a_{s+1}, \cdots , a_{k}) \in B_{k-s}(0, \e^{- 8 \delta n})$, so its $2(k-s)$-volume is higher than $\e^{- 8 \delta n \times 2 k}$. Then,

\begin{equation*}
\begin{split}
vol_{2(k-s)}(I_0) & \geq \e^{ 2\chi_{s+1}n + \cdots + 2\chi_{k}n - 2n \delta k}   \frac{\e^{- 8 \delta n \times 2 k}}{(1+ \epsilon(\gamma_0))^{2n}} \\
& \geq \e^{ 2\chi_{s+1}n + \cdots + 2\chi_{k}n - 20 \delta kn}
\end{split}
\end{equation*}

for $\gamma_0$ small enough, so that $1+ \epsilon(\gamma_0) \leq \e^{\delta}$.

This is the bound by below that we wanted: the $2k$-volume of the $W^0_n(f^n(x_i))$ are higher than $\e^{ 2\chi_{s+1}n + \cdots + 2\chi_{k}n - 28 \delta kn}$ (for $i=1, \cdots , N'$) by using the coarea formula.

We consider $W_n(f^n(x_i))= \tau_{f^n(x_{i})} C_{\delta}(\widehat{f}^n(\widehat{x_{i}})) (W^0_n(f^n(x_i)))$ and its $2k$-volume is bound by below by $\e^{ 2\chi_{s+1}n + \cdots + 2\chi_{k}n - 30 \delta kn}$. There are $N'$ such sets and the volume of $X$ is less than $1$ (if we want), so we can find a point $y$ inside at least 

\begin{equation*}
\begin{split}
N''&=\e^{ 2\chi_{s+1}n + \cdots + 2\chi_{k}n - 30 \delta kn} \times N' \\
&= \e^{ 2\chi_{s+1}n + \cdots + 2\chi_{k}n - 30 \delta kn}  \frac{\e^{h_{\mu}(f)n - \epsilon n}}{5} \e^{-16 \delta k n}.\\
\end{split}
\end{equation*}

Remark that we can suppose $h_{\mu}(f)+ 2\chi_{s+1} + \cdots + 2\chi_{k}>0$, otherwise there is nothing to do to prove the Theorem.

Denote by $W_n(f^n(x_1)), \cdots , W_n(f^n(x_{N''}))$ the corresponding sets to symplify the notations.

So, we can find $y_1, \cdots ,y_{N''}$ with $y_i \in B_s(0, \e^{-4 \delta n}) \times B_{k-s}(0, \e^{-8 \delta n})$ (that we used to construct $W^0_n(f^n(x_i))$) and $z_i= \tau_{x_i} \circ C_{\delta}(\widehat{x_{i}})(y_i) \in f^{-n}(y)$.

To finish, we have

\begin{Lem}

The points $z_i$ are $(n, \delta)$-separated.

\end{Lem}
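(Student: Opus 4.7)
The plan is to show that each $z_i$ stays extremely close to $x_i$ along the orbit up to time $n$, and then conclude by the triangle inequality from the $(n,2\delta)$-separation of the $x_i$'s. This is essentially the same mechanism used at the end of the proof of Theorem \ref{th2}.

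First, fix $i \neq j$. Since the $x_i$'s were chosen to be $(n,2\delta)$-separated, there exists $l \in \{0,\ldots,n-1\}$ with $\mathrm{dist}(f^l(x_i),f^l(x_j)) \geq 2\delta$. Using the relation $f = \tau_{f(x)} \circ f_x \circ \tau_x^{-1}$ and the definition $g_{\widehat{x}} = C_\delta^{-1}(\widehat{f}(\widehat{x})) \circ f_x \circ C_\delta(\widehat{x})$, a straightforward induction gives
$$f^l(z_i) = \tau_{f^l(x_i)} \circ C_\delta(\widehat{f}^l(\widehat{x_i})) \circ g_{\widehat{f}^{l-1}(\widehat{x_i})} \circ \cdots \circ g_{\widehat{x_i}}(y_i).$$
In particular, since $\tau_{f^l(x_i)} \circ C_\delta(\widehat{f}^l(\widehat{x_i}))$ sends $0$ to $f^l(x_i)$, we have
$$\mathrm{dist}(f^l(z_i),f^l(x_i)) \leq C \,\|C_\delta(\widehat{f}^l(\widehat{x_i}))\|\, \|g_{\widehat{f}^{l-1}(\widehat{x_i})} \circ \cdots \circ g_{\widehat{x_i}}(y_i)\|,$$
where $C$ only depends on the charts $\tau_x$.

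Next I would bound the two factors. By construction of $W^0_n(f^n(x_i))$, at every intermediate step we performed a cut-off with $B_k(0,e^{-4\delta n})$; since $y_i$ is a preimage of a point of $W^0_n(f^n(x_i))$, the partial iterates $g_{\widehat{f}^{l-1}(\widehat{x_i})} \circ \cdots \circ g_{\widehat{x_i}}(y_i)$ all remain in $B_k(0,e^{-4\delta n})$. On the other hand, by temperedness of $C_\delta$ along the orbit of $\widehat{x_i} \in \widehat{\Gamma_0}$, we have $\|C_\delta(\widehat{f}^l(\widehat{x_i}))\| \leq \frac{e^{\delta l}}{\alpha_0} \leq \frac{e^{\delta n}}{\alpha_0}$. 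Combining,
$$\mathrm{dist}(f^l(z_i),f^l(x_i)) \leq \frac{C}{\alpha_0}\, e^{-3\delta n} \leq \frac{\delta}{2}$$
for $n$ large enough (depending on $\delta$).

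Applying the same bound at index $j$ and using the triangle inequality,
$$\mathrm{dist}(f^l(z_i),f^l(z_j)) \geq \mathrm{dist}(f^l(x_i),f^l(x_j)) - \mathrm{dist}(f^l(z_i),f^l(x_i)) - \mathrm{dist}(f^l(z_j),f^l(x_j)) \geq 2\delta - \tfrac{\delta}{2} - \tfrac{\delta}{2} = \delta,$$
which proves that the $z_i$'s are $(n,\delta)$-separated. The only delicate point is justifying the bound on $\|C_\delta(\widehat{f}^l(\widehat{x_i}))\|$ uniformly in $l \leq n$; this is however already implicit in the analogous estimate written in the proof of Theorem \ref{th2} and follows from the tempered character of $C_\delta$ combined with the fact that $\widehat{x_i}$, $\widehat{f}^n(\widehat{x_i}) \in \widehat{\Gamma_0}$.
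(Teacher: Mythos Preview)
Your proof is correct and follows essentially the same approach as the paper: both arguments use the cut-off to keep the partial iterates $g_{\widehat{f}^{l-1}(\widehat{x_i})}\circ\cdots\circ g_{\widehat{x_i}}(y_i)$ inside $B_k(0,e^{-4\delta n})$, combine this with the tempered bound $\|C_\delta(\widehat{f}^l(\widehat{x_i}))\|\leq \alpha_0^{-1}e^{\delta l}$ to get $\mathrm{dist}(f^l(z_i),f^l(x_i))\leq \frac{\delta}{2}$, and then conclude by the triangle inequality from the $(n,2\delta)$-separation of the $x_i$'s.
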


\begin{proof}

Fix $i \neq j$. The points $x_i$ and $x_j$ are $(n, 2 \delta)$-separated so there exists $l \in \{0, \cdots ,n-1\}$ such that $dist(f^l(x_i), f^l(x_j)) \geq 2 \delta$. 

We have done a cut-off so $g_{\widehat{f}^{l-1}(\widehat{x_{i}})} \circ \cdot \circ g_{\widehat{x_{i}}}(y_i) \in B_k(0, \e^{-4 \delta n})$ and thus

$$f^l(z_i)= \tau_{f^l(x_{i})} C_{\delta}(\widehat{f}^l(\widehat{x_{i}}))  g_{\widehat{f}^{l-1}(\widehat{x_{i}})} \circ \cdot \circ g_{\widehat{x_{i}}}(y_i) \in B(f^l(x_i), \frac{K \e^{\delta l}}{\alpha_0} \e^{-4 \delta n})$$

with $K$ which depends only of $X$, so 

$$dist(f^l(z_i),f^l(z_j)) \geq 2 \delta - 2 \frac{K \e^{\delta l}}{\alpha_0} \e^{-4 \delta n} \geq \delta$$

and it proves the Lemma.

\end{proof}

We have

\begin{equation*}
\begin{split}
\frac{1}{n} \log \sup_{y \in X } ( \max \# E \mbox{ , } &E \mbox{   } (n, \delta) \mbox{-separated   }  \mbox{   and   } E  \subset f^{-n}(y) )\\
& \geq \frac{1}{n} \log \frac{\e^{ 2\chi_{s+1}n + \cdots + 2\chi_{k}n}  \e^{h_{\mu}(f)n - \epsilon n} \e^{-46 k \delta n}}{5} \\
&=  h_{\mu}(f) + 2\chi_{s+1} + \cdots + 2\chi_{k} - \epsilon -46  \delta k - \frac{1}{n} \log 5
\end{split}
\end{equation*}

and we conclude by taking the limits as in the end of the proof of the Theorem 2.

\bigskip

\noindent Henry De Thélin, Université Paris 13, Sorbonne Paris Nord, LAGA, CNRS (UMR 7539), F-93430, Villetaneuse, France.  

\noindent Email: {\tt dethelin@math.univ-paris13.fr}

\end{document}